\newtheorem{theorem}{Theorem}[section]
\newtheorem{corollary}[theorem]{Corollary}
\newtheorem{lemma}[theorem]{Lemma}
\newtheorem{fact}[theorem]{Fact}
\newtheorem{proposition}[theorem]{Proposition}
\theoremstyle{definition}
\newcommand{\dE}{d_E}
\newcommand{\dmax}{d_{\rm max}}
\newcommand{\m}[1]{\marginpar{\tiny{#1}}}
\newcommand{\E}{\mathbb E}
\newcommand{\R}{\mathbb R}
\newcommand{\x}{\mathbf x}
\newcommand{\y}{\mathbf y}
\newcommand{\z}{\mathbf z}
\newcommand{\X}{\mathbf X}
\newcommand{\Y}{\mathbf Y}
\newcommand{\pr}{\mathbb P}
\def\var{{\rm\bf Var}}
\newcommand{\remove}[1]{}
\newcommand{\real}{\ensuremath {\mathbb R} }
\newcommand{\far}{\rm far}
\newcommand{\eps}{\varepsilon}
\newcommand{\Bin}{{\rm Bin}}
\newcommand{\dm}[1]{{\color{blue} #1}}
\newcommand{\dmc}[1]{{\color{blue}{ \bf [~DM:\ }\emph{#1}\textbf{~]}}}
\newcommand{\RG} {\ensuremath{\mathscr G(n,r)}}
\newcommand{\SR} {\ensuremath{\mathcal S_n}}
\newcommand{\SU} {\ensuremath{\mathcal S_1}}
\def\wck#1 {\underline{#1}~\marginpar{\fbox{#1} {\tiny ?}}}
\def\silent#1\par{\par}
\def\text#1{\quad\mbox{#1}\quad}
\renewcommand{\@seccntformat}[1]{\@nameuse{the#1}.\quad}
\newcommand{\cc}[1]{{\color{red}#1}}
\newcommand{\jdc}[1]{{\color{violet}#1}}
\begin{document}
\title{Learning random points from geometric graphs or orderings} 
\author{Josep D\'{i}az}
\address{Universitat Polit\`{e}cnica de Catalunya, Dept. de Computer Science,
08034 Barcelona}
\email{\texttt{diaz@cs.upc.edu}. Partially supported by TIN2017-86727-C2-1-R.}
\author{Colin McDiarmid}
\address{Department of Statistics, Oxford University, 24 - 29 St Giles, Oxford OX1 3LB, UK}
\email{\tt cmcd@stats.ox.ac.uk}

\author{Dieter Mitsche}
\address{Institut Camille Jordan (UMR 5208), Univ. de Lyon, Univ. Jean Monnet, 42023 Saint-Etienne, France}
\email{\texttt{dieter.mitsche@univ-st-etienne.fr}. Partially supported by IDEXLYON of Universit\'e de Lyon (Programme Investissements d'Avenir ANR16-IDEX-0005).}

\keywords {random geometric graphs, realisation problem}
\subjclass {Primary: 05C80}

\maketitle
\onehalfspace
%


\begin{abstract}
Suppose that there is a family of $n$ random points $\X_v$ for $v \in V$, independently and uniformly distributed in the square $\SR = \left[-\sqrt{n}/2,\sqrt{n}/2\right]^2$ of area $n$. We do not see these points, but learn about them in one of the following two ways.

Suppose first that we are given the corresponding random geometric graph $G\in\RG$, where distinct vertices $u$ and $v$ are adjacent when the Euclidean distance $\dE(\X_u,\X_v)$ is at most $r$.  If the threshold distance $r$ satisfies $n^{3/14} \ll r \ll n^{1/2}$,  
then the following holds with high probability.  Given the graph $G$ (without any geometric information), in polynomial time we can approximately reconstruct the hidden embedding, in the sense that, `up to symmetries', for each vertex $v$ we find a point within distance about $r$ of~$\X_v$; that is, we find an embedding with `displacement' at most about~$r$.

Now suppose that, instead of being given the graph $G$, we are given, for each vertex $v$, the ordering of the other vertices by increasing Euclidean distance from~$v$.  Then, with high probability, in polynomial time we can find an embedding with the much smaller displacement error $O(\sqrt{\log n})$.
%
%
\end{abstract}
{\small\textbf{Keywords:} Random geometric graphs, unit disk graphs, approximate embedding, vertex orders.}

\section{Introduction}\label{ap:sec:intro}

In this section, we first introduce geometric graphs and random geometric graphs, the approximate realisation problem for such graphs, and families of vertex orderings; and we then present our main theorems, give a brief sketch 
of their proofs, and finally give an outline of the rest of the paper.

\subsection{Random geometric graphs}

Suppose that we are given a non-empty finite set $V$, and an embedding $\Psi:V \to \R^2$, or equivalently a family $({\x}_v:v \in V)$ of points in $\R^2$, 
where $\Psi (v) = {\x}_v$.  Given also a real \emph{threshold distance} $r>0$, we may define
the \emph{geometric graph} $G=G(\Psi, r)$ or $G=G(({\x}_v:v \in V),r)$ with vertex set $V$ by, for each pair $u, v$ of distinct elements of $V$, letting $u$ and $v$ be adjacent if and only if  $\dE(\x_u,\x_v) \leq r$.
Here $\dE$ denotes Euclidean distance, $\dE(\x,\x')= \| \x\!-\!\x' \|_2$. Note that the (abstract) graph $G$ consists of its vertex set $V$ and its edge set (with no additional geometric information).
A graph is called \emph{geometric} if it may be written as
$G(\Psi, r)$ as above, and then $(\Psi,r)$ 
is called a \emph{realisation} of the graph. Since we may rescale so that $r=1$, a geometric graph may also be called a \emph{unit disk graph} (UDG)~\cite{Hal80}.

Given a positive integer $n$, and a real $r>0$, the \emph{random geometric graph} $G\in\RG$ with vertex set $V=[n]$ is defined as follows. Start with $n$ random points $\X_1,\ldots,\X_n$ independently and uniformly distributed in the square $\SR = \left[-\sqrt{n}/2,\sqrt{n}/2\right]^2$ of area $n$; let $\Psi(v)=\X_v$
 for each $v \in V$; and form the geometric graph $G=G(\Psi, r)$ or $G=G((\X_v\!:v \in V), r)$. 

Random geometric graphs were first introduced 
by Gilbert~\cite{Gilbert} to model communications between radio stations. Since then, several related variants of these graphs have been widely used as models for wireless communication, and have also been extensively  studied from a mathematical point of view. The main reference on random geometric graphs is the monograph by Penrose~\cite{Penrose}; see also the survey of Walters~\cite{Walters}.
The properties of $G \in \RG$ are usually investigated from an asymptotic perspective, as $n$ grows to infinity and $r=r(n)= o(\sqrt{n})$. 

A sequence $A_n$ of events holds \emph{with high probability} (whp) if $\pr(A_n) \to 1$ as $n \to \infty$. 
For example, it is well known that  $r_c=\sqrt{\log n/\pi}$ is a sharp threshold function for the connectivity of the random geometric graph $G \in \RG$.
This means that, for every $\varepsilon>0$, if $r \le (1-\varepsilon) r_c$, then $G$ is whp disconnected, whilst if
$r \ge (1+\varepsilon) r_c$, then $G$ is whp connected 
(see~\cite{Penrose} for a more precise result).
We shall work with much larger~$r$, so our random graphs will whp be (highly) connected.

Given a graph $G$, we define the {\em graph distance} $d_G(u,v)$ between two vertices $u$ and $v$ to be the least number of edges in a path between $u$ and $v$
if $u$ and $v$ are in the same component, and if not then we let the distance be $\infty$.  Observe that in a geometric graph $G$ with a given realisation $(\Psi, r)$, each pair of vertices $u$ and $v$ must satisfy $d_G(u,v) \geq \dE(\Psi(u), \Psi(v))/r$, since each edge of the embedded geometric graph has length at most $r$. For a finite simple graph~$G$ with $n$ vertices, 
let $A(G)$
denote its adjacency matrix, the $n \times n$ symmetric matrix with 
$i,j$ entry 1 if $i j$ is an edge and 
 0 otherwise.  (We write $ij$ for an edge rather than the longer form $\{i,j\}$.)

\subsection{Approximate realisation for geometric graphs}

For a geometric graph $G$ with vertex set $V$, the \emph{realisation problem} for $G$ has as input the adjacency matrix $A(G)$, and consists in finding some realisation $(\Psi,r)$.
It is known that for UD graphs, the realisation problem (also called the unit disk graph reconstruction problem) is NP-hard~\cite{Breu98}, and it remains NP-hard even if we are given all the distances between pairs of vertices in some realisation~\cite{Aspnes04}, or if we are given all the angles between incident edges in some realisation~\cite{Bruck05}.
%
Given that these results indicate the difficulty in finding exact polynomial time algorithms, researchers naturally turned their attention to 
finding {good} {\em approximate} realisations (for deterministic problems). \\

\noindent
\emph{Previous work on approximate realisation}

There are different possible measures of `goodness' of an embedding. Motivated by the localisation problem for 
sensor networks, see for example~\cite{Bulusu04}, (essentially) the following scale-invariant measure of quality of embedding was introduced in~\cite{Moscibroda04}: given a geometric graph $G=(V,E)$, and an embedding $\Phi : V \to \R^2$ and threshold distance $r>0$, if $G$ is not a clique we let
$$Q_G(\Phi)=\frac{\max_{xy \in E} \|\Phi(x)- \Phi(y) \|_2}{\min_{ xy \notin E} \|\Phi(x)- \Phi(y) \|_2}$$
(where we insist that $x \neq y$); and let $Q_G(\Phi)=(1/r) \max_{xy \in E} \|\Phi(x)- \Phi(y) \|_2$ if $G$ is a clique.  Observe that if $(\Phi, r)$ is a realisation of $G$ then $Q_G(\Phi) \leq 1$. 
 The aim is to find an embedding $\Phi:V\to \R^2$ with say $r=1$ which minimises $Q_G(\Phi)$, or at least makes it small. 
The random projection method~\cite{Vempala98} was used in~\cite{Moscibroda04} to give an algorithm that, for an $n$-vertex UD graph $G$, outputs an embedding $\Phi$ with $Q_G(\Phi) = O(\log^{3.5}n \, \sqrt{\log\log n})$; that
is, it
approximates feasibility in terms of the measure $Q_G$ up to 
 a factor of $O(\log^{3.5}n \, \sqrt{\log\log n})$. 
On the other hand, regarding inapproximability, it was shown in~\cite{Kuhn04} that, for any $\eps > 0$,
it is NP-hard for UD graphs
to compute an embedding $\Phi$ with $Q_G(\Phi) \leq \sqrt{3/2}-\!\eps$. 

In this paper we do not aim to control a goodness measure like~$Q$ (though see the discussion following Theorem~\ref{thm:main2}).  Instead, we find whp a `good' embedding $\Phi$, which is `close' to the hidden original random embedding $\Psi$.
 We investigate the approximate realisation problem for a random geometric graph, and for a family of vertex orderings (see later).
 
What we achieve for random geometric graphs is roughly as follows.
We describe a polynomial time algorithm which, for a suitable range of values for~$r$, whp finds an embedding $\Phi$ which `up to symmetries' (see below for a detailed definition) maps each vertex $v$ to within about distance $r$ of the original random point $\Psi(v) = \X_v$.
Observe 
that the mapping $\Phi$ must then satisfy the following properties whp: for each pair of vertices $u,v$ with $\dE(\Psi(u),\Psi(v)) \leq r$
 we have $\dE(\Phi(u),\Phi(v)) \leq (3+\eps)r$, and for each pair of vertices $u,v$ with 
$\dE(\Psi(u),\Psi(v)) \ge (3+ \eps)r$  we have $\dE(\Phi(u),\Phi(v)) > r$. 
Thus, adjacent pairs of vertices remain quite close to being adjacent in $\Phi$, and non-adjacent pairs of vertices that are sufficiently far apart remain non-adjacent in $\Phi$.

For maps $\Phi_1, \Phi_2 : V \to \SR$, the familiar \emph{max} or \emph{sup distance} is defined by
\[ \dmax(\Phi_1,\Phi_2) = \max_{v \in V} \dE(\Phi_1(v), \Phi_2(v)).\]
Since there is no way for us to distinguish embeddings which are equivalent up to symmetries, we cannot hope to find an embedding $\Phi$ such that whp $\dmax(\Psi,\Phi)$ is small.
There are 8 symmetries (rotations or reflections) of the square. We define the \emph{symmetry-adjusted sup distance} $d^*$  by
\[ d^*(\Phi_1,\Phi_2) = \min_{\sigma} \dmax(\sigma\!\circ\!\Phi_1,\Phi_2) = \min_{\sigma} \dmax(\Phi_1,\sigma\!\circ\!\Phi_2),\]
where the minima are over the 8 symmetries $\sigma$ of the square $\SR$.  This is the natural way of measuring distance `up to symmetries'.
If we let $\Phi_1 \sim \Phi_2$ when $\Phi_1= \sigma\!\circ\!\Phi_2$ for some symmetry $\sigma$ of $\SR$, then it is easy to check that $\sim$ is an equivalence relation on the set of embeddings $\Phi: V \to \SR$, 
and $d^*$ is the natural sup metric on the set of equivalence classes.

Given $\alpha>0$, we say that an embedding $\Phi$ has \emph{displacement at most} $\alpha$ (from the hidden embedding $\Psi$)
if $d^*(\Psi,\Phi) \leq \alpha $.
Consider the graph with three vertices $u, v, w$ and exactly two edges $uv$ and $vw$:
 if this is the geometric graph $G(\Psi,r)$, then $\dE(\Psi(u),\Psi(w))$ could be any value in $(r,2r]$.  Examples like this suggest that we should be happy to find an embedding $\Phi$ with displacement at most about $r$;
and since our methods rely on graph distances, it is natural that we do not achieve displacement below $r$. 

\subsection{Vertex orderings}
\label{subsec.vorders}

We also consider a related approximate realisation problem, with different information.  As for a random geometric graph, we start with a family of $n$ unseen points $\X_1, \ldots, \X_n$ independently and uniformly distributed in the square $\SR$, forming the hidden embedding $\Psi$. (There is no radius $r$ here, and there is no graph.)
We are given, for each vertex $v$, the ordering $\tau_v$ 
 of the 
vertices by increasing Euclidean distance from~$v$.  This is the \emph{family of vertex orderings} corresponding to $\Psi$.
%
 Notice that with probability~$1$ no two distances between distinct points will be equal. 
Notice also that, if we had  access to the complete ordering of the Euclidean distances between all pairs of distinct vertices in the hidden embedding $\Psi$, then we could read off the family of vertex orderings.
 
We shall see that, by using the family of vertex orderings, we can with high probability find an embedding with displacement error dramatically better than the bound we obtain for random geometric graphs.

\subsection{Main results}
%
Suppose first that we are given a random geometric graph $G\in\RG$, with hidden original embedding  $\Psi$, for example by being given the adjacency matrix $A(G)$. We of course know the number $n$ of vertices and the number of edges, but we do not know the threshold distance $r$, and indeed we have no geometric information.
Our goal is to find an embedding $\Phi$ such that whp it has displacement at most about $r$, for as wide as possible a range of values for~$r$.
However, first we need to consider how to estimate~$r$. 
We determine the expected number of edges in $G\in\RG$ as a function of $n$ and $r$ (taking care over boundary effects), see Proposition~\ref{prop.M};
and use this value and the observed number of edges to calculate the estimator $\hat{r}$ of $r$.

\begin{proposition} \label{prop.rhat}
Let $r=r(n)>0$ satisfy $1/\sqrt{n} \ll r \ll \sqrt{n}$ as $n \to \infty$.
Let $\rho= \sqrt{n}/r$ (so $\rho \to \infty$ as $n \to \infty$).
Let $\omega=\omega(n)$ be a function tending to infinity with~$n$ arbitrarily slowly, and in particular satisfying $\omega \ll \sqrt{n}$.
Then in $O(n^2)$ time
we can compute an estimator $\hat{r}$ such that 
\begin{equation} \label{eqn.rhatN}
 |\hat{r} -r| \ll \;
 \omega \cdot \,(n^{-1/2} + \rho^{-3/2}) \; \mbox{ whp},
\end{equation}
and thus 
$\hat{r}/r \to 1$ in probability as $n \to \infty$.
\end{proposition}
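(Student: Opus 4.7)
The plan is to estimate $r$ by counting edges and then inverting. Let $N:=e(G)$ be the observed number of edges, computed from the adjacency matrix in $O(n^2)$ time. By Proposition~\ref{prop.M} we have an explicit closed form for $M(n,r):=\E[N]$; its leading behaviour is $M(n,r)=\tfrac{1}{2}\pi nr^2(1+O(r/\sqrt n))$ when $r\ll\sqrt n$, so that $\partial M/\partial r\sim \pi nr$ and $r\mapsto M(n,r)$ is strictly increasing in the relevant range. Define $\hat r$ by inverting $M(n,\hat r)=N$; this may be done by binary search on the explicit formula to any polynomial precision in $O(\log n)$ time, which is swamped by the $O(n^2)$ cost of forming $N$.

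To establish \eqref{eqn.rhatN}, I would first bound $|N-M(n,r)|$ via Chebyshev and then transfer to $|\hat r-r|$ by a Lipschitz inversion of $M(n,\cdot)$. Write $N=\sum_{u<v}I_{uv}$, where $I_{uv}$ is the indicator that $uv$ is an edge. Disjoint pairs of edges yield independent indicators, so only diagonal and shared-vertex contributions to the variance survive:
\begin{equation*}
\var(N) \;=\; \sum_{u<v}\var(I_{uv}) \;+\; \sum_{\substack{u,v,w\\\text{distinct}}}\operatorname{Cov}(I_{uv},I_{uw}).
\end{equation*}
The diagonal sum is bounded by $\E[N]=O(nr^2)$. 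For the shared-vertex sum, conditioning on the common vertex $\X_u$ gives $\operatorname{Cov}(I_{uv},I_{uw})=n^{-2}\var(f(\X_u))$, where $f(\mathbf x):=|\mathrm{disk}(\mathbf x,r)\cap\SR|$. Now $f(\mathbf x)=\pi r^2$ whenever $\mathbf x$ lies at distance at least $r$ from $\partial\SR$, while $|f(\mathbf x)-\pi r^2|=O(r^2)$ otherwise; since the boundary strip has area $O(r\sqrt n)$, one gets $\var(f(\X_u))=O(r^5/\sqrt n)$. Summing over $O(n^3)$ such triples yields $O(\sqrt n\, r^5)$, so in total
\begin{equation*}
\var(N) \;=\; O(nr^2+\sqrt n\, r^5).
\end{equation*}

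Chebyshev then gives $|N-M(n,r)|\le\omega^{1/2}\sqrt{\var(N)}=O\!\bigl(\omega^{1/2}(\sqrt n\, r+n^{1/4}r^{5/2})\bigr)$ with probability at least $1-1/\omega\to 1$. Combining this with the mean value theorem and $\partial M/\partial r\sim \pi nr$ on a neighbourhood of $r$,
\begin{equation*}
|\hat r-r| \;=\; O\!\left(\frac{|N-M(n,r)|}{nr}\right) \;=\; O\!\left(\omega^{1/2}\bigl(n^{-1/2}+n^{-3/4}r^{3/2}\bigr)\right) \;=\; O\!\left(\omega^{1/2}(n^{-1/2}+\rho^{-3/2})\right),
\end{equation*}
using $\rho^{-3/2}=n^{-3/4}r^{3/2}$. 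Since $\omega^{1/2}\ll\omega$, this establishes \eqref{eqn.rhatN}. Finally, $\hat r/r\to 1$ in probability because $(n^{-1/2}+\rho^{-3/2})/r=1/(r\sqrt n)+r^{1/2}/n^{3/4}\to 0$ under the assumptions $n^{-1/2}\ll r\ll n^{1/2}$.

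\textbf{Main obstacle.} The delicate step is the variance estimate. A naive bound $\var(f(\X_u))=O(r^4)$ from the maximum squared deviation would introduce an extra factor $\sqrt n/r$ and yield the wrong exponent of $\rho$ in \eqref{eqn.rhatN}; the correct rate demands exploiting that only an $O(r/\sqrt n)$ fraction of $\SR$ lies within distance $r$ of $\partial\SR$, so that $f(\X_u)$ equals $\pi r^2$ outside a set of small measure. Propagating the boundary corrections encoded in Proposition~\ref{prop.M} through the Lipschitz inversion with sufficient accuracy is the other place where care is needed.
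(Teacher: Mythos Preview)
Your approach is essentially identical to the paper's: count edges, invoke the mean and variance formulae from Proposition~\ref{prop.M} (whose variance bound $\var(M)<\tfrac12\pi nr^2+4\pi^2\sqrt n\,r^5$ you have correctly rederived via the boundary-strip argument), apply Chebyshev, and invert using $\partial M/\partial r\sim\pi nr$. The only place where the paper is slightly more careful is in the inversion step, where it first establishes $|\hat r-r|=o(r)$ before applying the linearisation $f(y)=(1+o(1))\pi rn|y|$, avoiding the mild circularity in your direct use of the mean value theorem; this is a bootstrapping detail rather than a genuine gap.
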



Our first theorem presents an algorithm to find an embedding $\Phi$ for a random geometric graph 
(given without any further information), which whp achieves displacement at most about $r$, for the range $n^{3/14} \ll r \ll \sqrt{n}$.  Note that $3/14 \approx 0.21428$.
%
\begin{theorem}\label{thm:main}
Let $r=r(n)$ satisfy $\: n^{3/14} \ll r \ll \sqrt{n}$, and consider the random geometric graph $G\in\RG$
(given say by the adjacency matrix $A(G)$)
corresponding to the hidden embedding $\Psi$. 
Let $\eps > 0$ be an arbitrarily small constant. There is an algorithm which in $O(n^2)$ time 
outputs an embedding $\Phi$ which whp has displacement at most $(1+\eps)r$, that is, whp $d^*(\Psi,\Phi) \leq (1+\eps)r$.
\end{theorem}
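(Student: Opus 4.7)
The plan is to use the adjacency matrix $A(G)$ alone, in four phases: (1)~estimate the threshold $r$; (2)~identify anchor vertices near the four corners of $\SR$; (3)~estimate Euclidean distances from each vertex to each anchor; and (4)~place each vertex by trilateration. The eight-fold ambiguity is broken by imposing a deterministic convention on the four anchor labels.

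For phase~(1), Proposition~\ref{prop.rhat} supplies an estimator $\hat r$ with $\hat r/r \to 1$ in probability, in $O(n^2)$ time. For phase~(2), observe that a vertex whose point lies near a corner of $\SR$ sees only about a quarter-disc inside $\SR$ and hence has expected degree close to $\pi r^2/4$, whereas an interior vertex has expected degree close to $\pi r^2$; ranking vertices by degree and separating graph-distant candidates extracts four anchors that whp lie within $O(r)$ of four distinct corners. For phase~(3), the coarsest option is $d_G(v,a)\,\hat r \approx \dE(\X_v,\X_a)$, accurate to additive $O(r)$ because in this density regime $d_G(v,a) = \lceil \dE(\X_v,\X_a)/r \rceil + O(1)$ whp for every pair. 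A sharper local estimate uses codegrees: if $d_G(u,v) \le 2$, the count $|N(u) \cap N(v)|$ is concentrated around the area of $B(\X_u,r)\cap B(\X_v,r)\cap \SR$, a smooth strictly decreasing function of $\dE(\X_u,\X_v)$ whose derivative is of order $r$, so inverting this function yields a distance estimate with error $o(r)$; concatenating such local estimates along short graph paths refines the distance from each vertex to each anchor. Phase~(4) reads off $\Phi(v)$ by trilateration from the four anchor-distance estimates; since the anchors are near distinct corners, the trilateration system is well-conditioned.

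The main obstacle is to balance the errors from all four phases so that the final displacement sits inside $(1+\eps)r$. The multiplicative error $|\hat r-r|/r$ from Proposition~\ref{prop.rhat} already consumes some of the $\eps r$ budget; the anchors themselves are located only to within $O(r)$; codegree counts have standard deviation of order $r$, which translates into distance error of order $1$ for pairs safely away from $\dE = 2r$; and along a shortest path of length $\Theta(\sqrt n/r)$ these small per-hop errors could in principle pile up. The hypothesis $r \gg n^{3/14}$ is precisely the range in which each of these sources can be controlled below $\eps r$: the corner-degree gap must exceed the degree fluctuations for reliable anchor identification, the codegree-based local distance errors must aggregate into a global error below $\eps r$ after path concatenation, and the graph-distance tail must be tight enough to make trilateration trustworthy. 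Boundary effects, which distort both degrees and the codegree-area formula for points close to $\partial \SR$, will be handled using the same boundary-corrected area computations that underlie Proposition~\ref{prop.M} and the estimator $\hat r$ itself.
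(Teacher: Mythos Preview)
Your four-phase outline matches the paper's architecture, but phase~(3) diverges and this is where the gap lies. The paper does \emph{not} use codegrees at all: it estimates $d_E(\X_v,c_i)$ by $\hat r\,(d_G(v,v_i)-\tfrac12)$, and the entire content of the hypothesis $r\gg n^{3/14}$ is that this estimate is accurate to $(\tfrac12+o(1))r$. This comes from Corollary~\ref{cor.dists2} (a consequence of Lemma~\ref{Thm:DMPP}, the D\'{\i}az--Mitsche--Perarnau--P\'erez-Gim\'enez bound), which gives $d_G(u,v) \le d_E(\X_u,\X_v)/r + 1 + o(1)$ uniformly over all pairs precisely when the error term $n^{1/2}r^{-7/3}$ is $o(1)$, i.e.\ when $r\gg n^{3/14}$. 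The paper then intersects two of the resulting annuli, with centres chosen so the crossing angle lies in $[\pi/3,2\pi/3]$, and the half-diagonal of the resulting near-rhombus is at most $(1+\eps)r$.

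Your codegree-concatenation proposal, as written, does not reach $n^{3/14}$. A shortest path from $v$ to an anchor has $\Theta(\sqrt n/r)$ hops, and with $O(1)$ additive error per hop the cumulative error is $\Theta(\sqrt n/r)$, which is $\le \eps r$ only for $r\gg n^{1/4}$. Nothing in your sketch supplies the cancellation or sub-additive accumulation needed to push below $n^{1/4}$, so the assertion that $n^{3/14}$ ``is precisely the range'' is unsupported by the mechanism you describe. Two smaller but non-negligible points: the anchors must be pinned to within $\omega=o(r)$ of the corners (Lemma~\ref{lem:corner} delivers any $\omega\to\infty$), not merely $O(r)$, or the anchor error already consumes a constant fraction of the $\eps r$ budget; and your ``coarse'' option $d_G\cdot\hat r$ is in fact the right one, but only once you know that the $O(1)$ in $d_G = \lceil d_E/r\rceil + O(1)$ is actually $1+o(1)$ uniformly --- that refinement is exactly the nontrivial DMPP input the proof relies on and your proposal does not invoke.
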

%

In practice, after running the algorithm in this theorem, we would run a local improvement heuristic, even though this might 
not lead to a provable decrease in $d^*(\Psi,\Phi)$.  For example, we might simulate a dynamical system where, for each vertex $v$, the corresponding point $\x_v$ 
tends to move towards the centre of gravity of the points $\x_w$ corresponding to the neighbours $w$ of $v$ (if $\x_v$ is not too close to the boundary of $\SR$).
\smallskip

Our second theorem concerns the case when we learn not the random geometric graph but the family of vertex orderings; that is, for each vertex $v$, we learn the ordering $\tau_v$ of the vertices by increasing Euclidean distance from~$v$, given as a linked list.
%
\begin{theorem}\label{thm:main2}
Suppose that we are given the family 
of vertex orderings corresponding to the hidden embedding $\Psi$.
There is a linear time algorithm that
outputs an embedding $\Phi$ which whp has displacement 
$< 1.197 \sqrt{\log n}$;
 that is, whp $d^*(\Psi,\Phi) < 1.197 \sqrt{\log n}$. 
\end{theorem}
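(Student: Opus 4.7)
The plan is to estimate all pairwise Euclidean distances approximately from the rank information, then to triangulate positions using three landmark vertices near corners of $\SR$. The key concentration fact is that the rank $k_u(v)$ of $v$ in the ordering $\tau_u$ equals $1$ plus the number of $w \notin \{u,v\}$ with $\X_w$ in the open disk $B(\X_u,\dE(\X_u,\X_v)) \cap \SR$. When $\X_u$ lies at distance at least $\dE(\X_u,\X_v)$ from the boundary of $\SR$, this disk is wholly inside $\SR$ and has area $\pi\,\dE(\X_u,\X_v)^2$, so $k_u(v)-1$ is $\Bin(n-2,\pi\,\dE(\X_u,\X_v)^2/n)$ conditional on $\X_u,\X_v$. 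A Chernoff bound together with a union bound over the $\binom{n}{2}$ pairs gives $|k_u(v)-\pi\,\dE(\X_u,\X_v)^2|=O(\dE(\X_u,\X_v)\sqrt{\log n})$ whp, and a first-order Taylor expansion then yields an estimator $\hat d(u,v):=\sqrt{k_u(v)/\pi}$ with absolute error $O(\sqrt{\log n})$ uniformly, with analogous formulas involving geometric factors $1/2$ or $1/4$ when $\X_u$ lies near an edge or corner of $\SR$.

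Next I would identify three landmark vertices $v_1,v_2,v_3$ near distinct corners of $\SR$. Whp each of the four corner squares of side $\Theta(\sqrt{\log n})$ contains a random point, so such landmarks exist. Algorithmically: pick any starting vertex $v_0$ and let $v_1$ be the last element of the linked list $\tau_{v_0}$, so $v_1$ is the vertex farthest from $\X_{v_0}$; whp $v_1$ lies within $O(\sqrt{\log n})$ of the corner of $\SR$ farthest from $\X_{v_0}$. Then let $v_2$ be the last element of $\tau_{v_1}$, near the diagonally opposite corner. Finally let $v_3$ maximise $\min(k_{v_1}(w),k_{v_2}(w))$; the max-min is attained on the perpendicular bisector of the diagonal, so whp $v_3$ is near one of the two remaining corners. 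The implicit labelling of which vertex is near which corner fixes one of the $8$ symmetries of $\SR$, which is cost-free because $d^*$ quotients these out.

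Assigning each landmark $v_i$ the nominal position of its nearest corner, I traverse each of the three lists $\tau_{v_i}$ once to record the rank $k_{v_i}(u)$ for every $u \in V$, and form three distance estimates $\hat d(v_i,u)$ using the corner (quarter-disk) version of the formula. Each estimate has additive error $O(\sqrt{\log n})$ by the concentration step. Because the three nominal positions lie at three distinct corners of $\SR$, the triangulation is well-conditioned (the condition number is bounded by a constant) and solving the three circle equations yields $\Phi(u)$ with $\|\Phi(u)-\X_u\|_2 = O(\sqrt{\log n})$ uniformly in $u$. The total running time is $O(n)$: $O(n)$ to find the three landmarks by linked-list traversal, $O(n)$ per landmark to label ranks, and $O(1)$ per vertex to triangulate.

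The sharp constant $1.197$ comes from careful bookkeeping of three sources of error: (i)~the leading sub-Gaussian constant in the binomial tail, (ii)~the quarter-disk scaling factor $2$ combined with $1/\sqrt{\pi}$ giving the main contribution $2/\sqrt{\pi}\approx 1.128$, and (iii)~the amplification from triangulating against corner anchors. The main obstacle I anticipate is the \emph{boundary analysis}: the rank-to-distance conversion interpolates smoothly between the interior ($\pi r^2$), edge ($\pi r^2/2$) and corner ($\pi r^2/4$) regimes, and one must handle the transitions uniformly while also proving that the landmark-identification procedure whp places each $v_i$ in the strict corner regime (rather than, say, midway along an edge). Tracking all of these constants carefully is required to reach the precise numerical bound $1.197\sqrt{\log n}$ rather than a generic $O(\sqrt{\log n})$.
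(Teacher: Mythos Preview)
Your high-level strategy---find landmarks near corners, convert ranks to distances via the area of a disk sector intersected with $\SR$, then triangulate---is exactly the paper's approach. However, there is a genuine gap, and your account of the constant is incorrect.

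\textbf{The three-landmark scheme fails near the fourth corner.} You place landmarks at three corners and triangulate every vertex from all three. But for a vertex $v$ near the fourth (unlandmarked) corner, the landmark diagonally opposite to it is at Euclidean distance close to $\sqrt{2n}$, so the corresponding rank $k$ is close to $n$. In this regime the rank-to-distance conversion is ill-conditioned: writing $\lambda(s)$ for the area of $B(c_1,s)\cap\SU$, one has $\lambda'(s)\to 0$ as $s\to\sqrt{2}$ (the arc subtended at the corner shrinks to zero), so $s_n'(k)\to\infty$ and the $O(\sqrt{k\log n})$ fluctuation in the rank translates into an error in distance that is \emph{not} $O(\sqrt{\log n})$. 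Your assertion that ``the triangulation is well-conditioned'' addresses only the geometry of the anchor points, not the accuracy of the three radii. The paper avoids this by using \emph{four} corner landmarks: for each $v$ it identifies the closest corner $c_{i_0}$ (via ranks), discards it, and then either (case~1) uses the remaining three when the rank from the opposite corner $c_{i_0+2}$ is at most $\alpha n$ with $\alpha\approx 0.9264$, or (case~2) uses only the two adjacent corners $c_{i_0\pm1}$ when that rank exceeds $\alpha n$, after checking separately that those two circles meet at an angle in $[\pi/3,2\pi/3]$. Note also that with only three landmarks at $c_1,c_2,c_3$, dropping the unsafe estimate for $v$ near $c_4$ leaves you with circles centred at the two opposite corners $c_1,c_3$, which meet in \emph{two} points (near $c_2$ and $c_4$); the paper's fourth landmark is what provides the disambiguation.

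\textbf{The constant $1.197$ is not $2/\sqrt{\pi}$.} The bound comes from the Chernoff analysis of $N(B(c_i,s_n(k)\pm c\sqrt{\log n}))$: the exponent involves the ratio $(\psi(s)\,s)^2/\lambda(s)$, where $\psi(s)$ is the angle subtended at the corner by the arc inside $\SR$. The worst case over the admissible range $k\le\alpha n$ is at $s=2/\sqrt{3}$, where $\psi=\pi/6$; combining this with $\lambda(s)\le\tfrac{\pi}{4}s^2$ yields the requirement $\tfrac{2\pi}{9}c^2>1$, i.e.\ $c>\sqrt{9/(2\pi)}\approx 1.1968$. Your ``quarter-disk scaling factor $2$ combined with $1/\sqrt{\pi}$'' does not produce this number and misidentifies the bottleneck. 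Relatedly, your rank-to-distance formula $\hat d=\sqrt{k/\pi}$ and the remark about ``factors $1/2$ or $1/4$'' are too coarse: for a landmark at a corner one needs the explicit inverse $s_n(k)=\lambda^{-1}(k/n)\sqrt{n}$ of the piecewise-analytic area function (equations for $\lambda(s)$ when $1<s<\sqrt{2}$ involve $\sin^{-1}(2s^{-2}-1)$), and the paper shows these values can be tabulated in linear time.
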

Now let $\sqrt{\log n} \ll r \ll \sqrt{n}$, and let $G\in\RG$ be the random geometric graph corresponding to the hidden embedding $\Psi$.
Then the constructed embedding $\Phi$ does well in terms of the measure $Q_G$ introduced earlier: we have
\begin{equation} \label{eqn.Qgood}
 Q_G(\Phi) < \frac{r+ 1.197 \sqrt{\log n}}{r- 1.197 \sqrt{\log n}} < 1 + 2.4 \sqrt{\log n}/r = 1+o(1) \;\; \mbox{ whp}.
\end{equation}
Also, from the constructed embedding $\Phi$ we may form a second geometric graph $G'=G(\Phi, r)$.
Then $G'$ is close to $G$ in the sense that `we get only a small proportion of edges wrong'.
We make this more precise in the inequality~(\ref{eqn.symm}) below.  It is easy to see that whp $G$ has $(\frac12\!+\!o(1)) \pi r^2 n$ edges (see Proposition~\ref{prop.M} for a more detailed result). 
We know from Theorem~\ref{thm:main2} that whp $\Phi$ has displacement 
 $< 1.197 \sqrt{\log n}$: 
 assume that this event holds. If $\dE(\X_u, \X_v) \leq r - 2.394 \sqrt{\log n}$ then $\dE(\Phi(u), \Phi(v)) \leq r$ so $uv$ is an edge in $G'$; and similarly, if
  $\dE(\X_u, \X_v)  \geq  r + 2.394 \sqrt{\log n}$ then $\dE(\Phi(u), \Phi(v)) > r$, so $uv$ is not an edge in $G'$.
Thus there could be a mistake with $uv$ only if
\[ r - 2.394 \sqrt{\log n} < \dE(\X_u, \X_v)  <  r + 2.394 \sqrt{\log n}.\]
But whp the number of unordered pairs $\{u,v\}$ of distinct vertices such that these inequalities hold is $(1\!+\!o(1))\, n \cdot 2 \pi r  \cdot 2.394 \sqrt{\log n}$. 
 Hence, whp the symmetric difference of the edge sets of $G$ and $G'$ satisfies
\begin{equation} \label{eqn.symm}
\big| E(G) \Delta E(G') \big|\, / \, \big| E(G) \big| < 9.6 \sqrt{\log n} /r. 
\end{equation}
\smallskip

\noindent
\emph{Outline sketch of the proofs of Theorems~\ref{thm:main} and~\ref{thm:main2}}

In order to prove these theorems, we first identify $4$ `corner vertices' such that the corresponding points are close to the $4$ corners of $\SR$.  To do this,
for Theorem~\ref{thm:main} we are guided by vertex degrees; and for Theorem~\ref{thm:main2} we look at the set of `extreme' pairs $\{v, v'\}$ such that $v'$ is farthest from $v$ in the order $\tau_v$, and $v$ is farthest from $v'$ in the order $\tau_{v'}$.

To prove Theorem~\ref{thm:main}, we continue as follows. For a vertex $v$, we approximate the Euclidean distance between $\X_v$ and a corner by using the graph distance from $v$ to the corresponding corner vertex, together with the estimate $\hat{r}$ of $r$; and then we place our estimate $\Phi(v)$ for $\X_v$ at the intersection of circles of appropriate radius 
centred on a chosen pair of the corners.
For each of the circles, whp $\X_v$ lies within a narrow annulus around it, so $\Phi(v)$ is close to $\X_v$.

In the proof of Theorem~\ref{thm:main2}, 
we obtain a much better approximation to the Euclidean distance between $\X_v$ and a corner, by using the rank of $v$ in the ordering from the corresponding corner vertex, and the fact that the number of points $\X_w$ at most a given distance from a given corner is concentrated around its mean.  In this way, we obtain much narrower annuli, and a correspondingly much better estimate for $\X_v$.

\smallskip

\subsection{Further related work}\label{subs:further} 
In this section we mention further related work.

Theorem 1 of~\cite{Arias} estimates Euclidean distances between points by $r$ times the graph distance in the corresponding geometric graph. It is assumed that $r$ is known, and the error is at most $r$ plus a term involving the maximum radius of an empty ball. In the case of $n$ points distributed uniformly and independently in $\SR$,  the authors of~\cite{Arias} need $r \ge n^{1/4}(\log n)^{1/4}$ in order to keep the error bound down to $(1+o(1))r$ whp (so they need $r$ a little 
 larger than we do in Theorem~\ref{thm:main}).

In~\cite{Partha} the authors assume that they are given a slightly perturbed adjacency matrix (some edges were inserted, some were deleted) of $n$ points in some metric space. Under fairly general conditions on insertion and deletion, the authors use the Jaccard index (the size of the intersection of the neighbourhood sets of the endpoints of an edge divided by the size of their union) to compute a $2$-approximation to the graph distances.

The use of graph distances for predicting links in a dynamic social network such as a co-authorship network was experimentally analysed in~\cite{Liben2}: it was shown that 
graph distances (and other approaches) can provide useful information to predict the evolution of such a network. In~\cite{Sarkar} the authors consider a deterministic and also a non-deterministic model, and show that using graph distances, and also using common neighbours, they are able to predict links in a social network. The use of shortest paths in graphs for embedding points was also experimentally analysed in~\cite{Shang}. See~\cite{KruskalSeery} for an approach using graph distances and multidimensional scaling to assign geometric locations to vertices (in such a way that the Euclidean distances between the points "best" match the graph distances), 
with various applications, in particular to graph drawing and graph realisation.  

In~\cite{Luxburg} the authors consider a $k$-nearest neighbour graph on $n$ points $\X_i$ that have been sampled iid from some unknown density in Euclidean space. They show how shortest paths in the graph can be used to estimate the unknown density. In~\cite{Terada} the authors consider the following problem: given a set of indices $(i,j,k,\ell)$, 
 together with constraints $d_E(\X_i,\X_j) < d_E(\X_k,\X_\ell)$ (without knowing the distances), construct a point configuration that preserves these constraints as well as possible. The authors propose a `soft embedding' algorithm which not only counts the number of violated constraints, but takes into account also the amount of violation of each constraint. Furthermore, the authors also provide an algorithm for reconstructing points when only knowing the $k$ nearest neighbours of each data point, and they show that the obtained embedding converges for $n \to \infty$ to the real embedding (w.r.t. to a metric defined by the authors), as long as $k \gg \sqrt{n \log n}$. This setup is similar to our Theorem~\ref{thm:main2} in the sense that we are given the ordinal ranking of all distances from a point (for each point), though note that we estimate points up to an error $O(\sqrt{\log n})$ rather than $o(\sqrt{n})$ (recall that our points are sampled from the $\sqrt{n} \times \sqrt{n}$ square $\SR$). See~\cite{Bernsteinetal} for a similar approach to detecting a $d$-dimensional manifold in an $N$-dimensional space with $d \ll N$ based on sufficiently dense point sampling, using approximations of geodesic distances by graph distances.

In a slightly different context,  the algorithmic problem of computing the embedding of $n$ points in Euclidean space given some or all pairwise distances was considered. 
If all $\binom{n}{2}$ pairwise distances are known, and we are given the positions of three points forming a triangle $T$, then we can easily find exact positions in $O(n)$ arithmetic operations:
separately for each point $\x$ not in $T$, find its location with respect to $T$ by intersecting the three circles corresponding to the distances from $\x$ to the points in $T$, using $O(1)$ arithmetic operations. In this way we use only the $O(n)$ distances involving at least one of the points in $T$.
%
In~\cite{Cucuringu2, Cucuringu} the authors consider the problem of knowing only a subset of the distances (they know only small distances, as typical in sensor networks), and show that by patching together local embeddings of small subgraphs an 
approximate embedding of the points can be found quickly (in polynomial time).

The related problem of trying to detect latent information on communities in a geometric framework was studied by~\cite{Baccelli}. In this case, (visible)
points of a Poisson process in the unit square are equipped with an additional hidden label indicating to which of two hidden communities they belong. 
The probability that two vertices are joined by an edge naturally depends on the distance between them, but also edges between vertices of the same label have a higher probability to be present than edges between vertices of different labels. The paper gives exact recovery results for a dense case, and also shows the impossibility of recovery in a sparse case.

\subsection{Organisation of the paper}
In Section~\ref{sec:Prelim} we recall or establish preliminaries; in Section~\ref{sec.randdErev} we investigate the distribution of the number of edges in $G\in\RG$, use the number of edges to give an estimator $\hat{r}$ for the threshold distance $r$,  and estimate Euclidean distances using $\hat{r}$ and graph distances; in Section~\ref{sec:Main} we complete the proof of Theorem~\ref{thm:main}; in Section~\ref{sec:Main2new} we prove Theorem~\ref{thm:main2}; and in Section~\ref{sec:Conclusion} we conclude with some open questions.


\section{Preliminaries}\label{sec:Prelim}

In this section we gather simple facts and lemmas that are used in the proofs of the main results. We start with 
a standard version of the Chernoff bounds for binomial random variables, see for example Theorem 2.1 and inequality (2.9) in~\cite{JLR}. 
%
\begin{lemma}  (Chernoff bounds) \label{lem:Chernoff}
Let $X$ have the binomial distribution $\Bin(n,p)$ 
with mean $\mu=np$. For every $\delta>0$ we have
\[ \pr(X \leq (1-\delta) \mu) \leq e^{-\delta^2 \mu/2}\]
and
\[ \pr(X \geq (1+\delta) \mu) \leq 
e^{-\delta^2(1+\delta/3)^{-1} \mu/2};\]
and it follows that, for each $0 < \delta \le 1$,
\[ \pr(|X-\mu| \geq \delta \mu) \leq 2 e^{-\delta^2 \mu/3}.\]
\end{lemma}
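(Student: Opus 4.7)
The plan is to use the standard exponential Markov (Chernoff-Cramér) technique. For any $t>0$ and any real $a$,
\[ \pr(X \geq a) = \pr(e^{tX} \geq e^{ta}) \leq e^{-ta}\, \E[e^{tX}], \]
and symmetrically for $t<0$ to handle the lower tail. Since $X \sim \Bin(n,p)$ is a sum of $n$ i.i.d.\ Bernoulli$(p)$ variables, its moment generating function factorises as $\E[e^{tX}] = (1-p+pe^t)^n$. Using the elementary inequality $1+x \leq e^x$ gives
\[ \E[e^{tX}] \leq \exp\!\bigl(np(e^t-1)\bigr) = \exp\!\bigl(\mu(e^t-1)\bigr). \]

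For the upper tail I would take $a=(1+\delta)\mu$ and optimise over $t>0$, the minimum occurring at $t = \log(1+\delta)$, which yields the raw Chernoff bound
\[ \pr(X \geq (1+\delta)\mu) \leq \left(\frac{e^{\delta}}{(1+\delta)^{1+\delta}}\right)^{\!\mu}. \]
For the lower tail I would set $a=(1-\delta)\mu$ and choose $t = \log(1-\delta) < 0$ analogously, obtaining
\[ \pr(X \leq (1-\delta)\mu) \leq \left(\frac{e^{-\delta}}{(1-\delta)^{1-\delta}}\right)^{\!\mu}. \]

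To convert these to the clean exponential forms stated in the lemma, the analytic step is to verify two elementary inequalities: for $\delta>0$,
\[ (1+\delta)\log(1+\delta) - \delta \;\geq\; \frac{\delta^2/2}{1+\delta/3}, \]
and for $0<\delta<1$,
\[ (1-\delta)\log(1-\delta) + \delta \;\geq\; \frac{\delta^2}{2}. \]
Each is proved by a one-variable calculus argument: define $f(\delta)$ to be the difference of the two sides, check $f(0)=f'(0)=0$, and verify $f''(\delta)\geq 0$ on the relevant interval (equivalently, use the Taylor expansion of the logarithm and compare term by term). Substituting these inequalities into the raw Chernoff bounds yields the two one-sided bounds in the lemma.

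Finally, the two-sided bound is immediate: a union bound gives
\[ \pr(|X-\mu| \geq \delta \mu) \leq e^{-\delta^2\mu/2} + e^{-\delta^2(1+\delta/3)^{-1}\mu/2}, \]
and for $0<\delta\leq 1$ we have $(1+\delta/3)^{-1} \geq 3/4 > 2/3$, so each summand is at most $e^{-\delta^2\mu/3}$, giving the claimed factor of $2$. The main obstacle—really the only non-mechanical step—is establishing the two analytic inequalities above with the correct constants; everything else is automatic from the MGF computation. Since all of this is standard, one could equivalently just cite Theorem~2.1 and inequality~(2.9) of~\cite{JLR} verbatim.
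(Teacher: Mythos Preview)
Your argument is correct and is exactly the standard MGF derivation; the paper itself gives no proof at all, simply citing Theorem~2.1 and inequality~(2.9) of~\cite{JLR}, which you also mention at the end. One tiny remark: the lemma asserts the lower-tail bound for every $\delta>0$, whereas your choice $t=\log(1-\delta)$ only makes sense for $0<\delta<1$; but for $\delta\geq 1$ the claim is trivial since $(1-\delta)\mu\leq 0$ and $\pr(X=0)=(1-p)^n\leq e^{-\mu}\leq e^{-\mu/2}$.
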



%
\smallskip

For $\x \in \R^2$ and $r>0$, let $B(\x,r)$
denote the closed ball of radius $r$ around~$\x$. We shall repeatedly use the following elementary fact.

\begin{fact} \label{factBinom}
Let $G\in\RG$ be a random geometric graph.  For each $\x \in \SR$ let $\sigma_n(\x)$
 be the area of $B(\x,r) \cap \SR$, and let $\rho_n(\x)= \sigma_n(\x)/n$.
Then for each vertex $v \in V=[n]$ and each point $\x \in \SR$, $\deg_G(v)$ conditional on $\X_v=\x$ has distribution $\Bin(n\!-\!1, \rho_n(\x))$.  More precisely, this gives a density function: for any Borel set $A \subseteq \SR$, 
\[ \pr((\deg_G(v)=k) \land (\X_v \in A)) = \int_{\x \in A}  \pr(\Bin(n\!-\!1, \rho_n(\x))=k) \, d\x.\]
In particular, if $\, \rho^- \leq \rho_n(\x) \leq \rho^+$ for each $\x \in A$, then, conditional on $\X_v \in A$,  $\deg_G(v)$ is stochastically at least $\Bin(n\!-\!1,\rho^-)$ 
and stochastically at most $\Bin(n\!-\!1,\rho^+)$.
\end{fact}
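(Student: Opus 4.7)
The plan is to exploit the independence of the points by conditioning on $\X_v$. First I fix a vertex $v$ and a point $\x \in \SR$, and observe that, since $\X_1,\ldots,\X_n$ are independent, conditional on $\X_v = \x$ the remaining points $(\X_u)_{u \neq v}$ are still independent and uniformly distributed on $\SR$. For each $u \neq v$, the event $\{uv \in E(G)\}$ coincides with $\{\X_u \in B(\x,r) \cap \SR\}$; by uniformity on the area-$n$ square this event has probability $\sigma_n(\x)/n = \rho_n(\x)$. As these $n-1$ events are mutually independent across $u$, the conditional law of $\deg_G(v)$ given $\X_v = \x$ is a sum of $n-1$ i.i.d.\ Bernoullis and therefore equals $\Bin(n-1, \rho_n(\x))$.

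For the joint density statement, I would simply note that $\X_v$ has density $1/n$ on $\SR$ with respect to Lebesgue measure, and integrate the conditional probability against this density:
\[ \pr\bigl((\deg_G(v)=k) \land (\X_v \in A)\bigr) \;=\; \int_{\x \in A} \pr(\Bin(n-1, \rho_n(\x)) = k) \, \frac{d\x}{n}, \]
which is the displayed formula under the convention that $d\x$ on $\SR$ denotes the uniform probability element $dx/n$.

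For the stochastic dominance, I would appeal to the familiar monotonicity $\Bin(m,p) \preceq_{\rm st} \Bin(m,p')$ for $p \le p'$, proved by the standard coupling that realises each Bernoulli as the indicator $\{U \le p\}$ for a single i.i.d.\ uniform $[0,1]$ variable $U$. Under the hypothesis $\rho^- \le \rho_n(\x) \le \rho^+$ on $A$, the conditional law of $\deg_G(v)$ given $\X_v \in A$ is the mixture of the binomials $\Bin(n-1, \rho_n(\x))$ weighted by $d\x$ over $A$, so applying the pointwise coupling and then integrating via Fubini yields the two-sided stochastic bounds by $\Bin(n-1, \rho^-)$ and $\Bin(n-1, \rho^+)$. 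There is no real obstacle anywhere in the argument; the only item requiring any care is the mixture-to-dominance step, and this amounts to standard bookkeeping once the coupling is set up on a common probability space.
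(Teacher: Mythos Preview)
Your argument is correct. The paper presents this statement as a \emph{Fact} and gives no proof, so there is nothing to compare against; your reasoning is exactly the standard justification one would expect (condition on $\X_v=\x$, note the remaining $n-1$ points are i.i.d.\ uniform, each lands in $B(\x,r)\cap\SR$ with probability $\rho_n(\x)$, and sum the independent indicators).

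One small remark: you correctly flagged the normalisation issue in the displayed integral. As literally written with Lebesgue $d\x$, the paper's formula is off by a factor of $n$ (summing over $k$ would give $|A|$ rather than $|A|/n=\pr(\X_v\in A)$). Your reading of $d\x$ as the uniform probability element $d\x/n$ on $\SR$ is the sensible fix, and it is good that you made this explicit rather than glossing over it. The stochastic-dominance step via the uniform-threshold coupling and mixture integration is also fine.
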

\smallskip

The next lemma gives elementary bounds on the area $\sigma_n(\z)$ for $\z \in \SR$, in terms of the distance from $\z$ to a corner of $\SR$ or to the boundary of $\SR$.

\begin{lemma} \label{lem.extraarea}
Let $0<s \leq r< \sqrt{n}/2$, and let $\z \in \SR$.
\begin{enumerate}
\item 
If $\z$ is at distance at most $s$ from some corner, then $\sigma_n(\z) \leq \tfrac14 \pi (r+s)^2$.
\item If $\z$ is at distance at least $s$ from each corner, then $\sigma_n(\z) \geq \tfrac14 \pi r^2 + s(r-s/2)$.
\item If $\z$ is at distance at most $s$ from the boundary, then $\sigma_n(\z) \leq \tfrac12 \pi r^2 + 2sr$.
\item If $\z$ is at distance at least $s$ from the boundary and at distance at most $r$ from at most one side of the boundary, then
$\sigma_n(\z) \geq \tfrac12 \pi r^2 + 2s(r-s/2)$.
\end{enumerate}
\end{lemma}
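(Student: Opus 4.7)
The plan is to reduce all four bounds to properties of the one-variable function $f(d) = \pi r^2 - r^2 \arccos(d/r) + d\sqrt{r^2 - d^2}$, namely the area of $B(\z,r)$ remaining when a straight line at perpendicular distance $d \in [0,r]$ from $\z$ cuts off the farther side. A direct differentiation yields $f'(d) = 2\sqrt{r^2 - d^2}$, which satisfies the two elementary bounds $2(r-d) \leq f'(d) \leq 2r$ on $[0,r]$.

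Part (i) is immediate from the containment $B(\z,r) \subseteq B(c, r+s)$, where $c$ is the nearest corner of $\SR$, together with the fact that $\SR$ meets $B(c,r+s)$ only in the single inward quadrant at $c$. Part (iii) follows by taking $d \leq s$ to be the distance from $\z$ to the nearest side of $\SR$: then $\sigma_n(\z) \leq f(d) \leq f(0) + 2rd = \pi r^2/2 + 2rd \leq \pi r^2/2 + 2rs$ using the upper bound on $f'$. Part (iv) splits into the easy subcase where $\z$ is within $r$ of no side (so $\sigma_n(\z) = \pi r^2$, dominating by an AM--GM estimate) and the subcase where exactly one side is within $r$ at distance $d \in [s, r]$, where $\sigma_n(\z) = f(d) \geq f(s) \geq \pi r^2/2 + 2s(r - s/2)$ by the lower bound on $f'$.

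Part (ii) is the main obstacle. By the square's symmetry, take $\z$ in the top-right quadrant of $\SR$; let $t_2, t_4$ denote the distances from $\z$ to the right and top sides respectively. The hypothesis then reads $t_2^2 + t_4^2 \geq s^2$, since the three other corners are automatically more than $r \geq s$ away. A direct cross-section argument shows that $\sigma_n$ is non-decreasing in each of $t_2, t_4$, so the minimum of $\sigma_n$ over the feasible region is attained on the arc $t_2^2 + t_4^2 = s^2$, on which $t_2, t_4 \in [0, s] \subseteq [0, r]$. In coordinates centred at $\z$ I decompose $B(\z,r) \cap \SR$ into the lower piece $\{y' \leq 0,\, x' \leq t_2\}$, of area $f(t_2)/2 \geq \pi r^2/4 + h(t_2)$ where $h(t) := rt - t^2/2$, and the upper-left strip $\{x' \leq 0,\, 0 < y' \leq t_4\}$, whose area equals $\int_{-r}^0 \min(t_4, \sqrt{r^2 - x'^2})\, dx' \geq \int_{-r}^0 \min(t_4, r + x')\, dx' = h(t_4)$ using $\sqrt{r^2 - u^2} \geq r - u$ on $[0,r]$.

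Summing the two contributions reduces (ii) to the scalar inequality $h(t_2) + h(t_4) \geq h(s)$ on the arc $t_2^2 + t_4^2 = s^2$. Since $h$ is concave and strictly increasing on $[0,r]$ with $h(0) = 0$, a short Lagrange-multiplier calculation shows the minimum on the arc is attained at the endpoints $(s, 0)$ and $(0, s)$, where the sum equals exactly $h(s)$; the interior critical point $t_2 = t_4 = s/\sqrt{2}$ gives $2h(s/\sqrt{2}) = rs\sqrt{2} - s^2/2 > h(s)$, confirming it is in fact the maximum on the arc. The delicate step is the lower bound on the upper-left strip, whose tightness is responsible for matching the constant in front of $s(r - s/2)$ in the final bound rather than something weaker.
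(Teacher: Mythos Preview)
Your proof is correct. The organisation differs from the paper's in a useful way: you introduce the single function $f(d)=\pi r^2 - r^2\arccos(d/r)+d\sqrt{r^2-d^2}$ together with the derivative bounds $2(r-d)\le f'(d)\le 2r$, and then parts (iii), (iv) and the ``lower piece'' of (ii) all fall out of the same integration inequality. The paper instead proves (ii) and (iv) from a direct geometric observation (the strip $\{-s\le x\le 0,\,y\ge 0\}$ of the disk contains a trapezoid of area $s(r-s/2)$) and leaves (i), (iii) as ``easy''. For (ii), both arguments first reduce by monotonicity to points at distance exactly $s$ from the nearby corner and then arrive at the \emph{same} scalar inequality on the arc: your $t_2+t_4\ge s$ under $t_2^2+t_4^2=s^2$ is precisely the paper's $\cos\theta+\sin\theta\ge 1$. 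Your packaging via $f$ has the advantage of treating edges uniformly; the paper's trapezoid picture is perhaps more visual.

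Two minor remarks that do not affect correctness. First, the ``AM--GM estimate'' label in (iv) is inaccurate; what you actually need is $\pi r^2/2\ge 2sr-s^2$, which is just $(s-r)^2+(\pi/2-1)r^2\ge 0$. Second, invoking Lagrange multipliers for $h(t_2)+h(t_4)\ge h(s)$ on the arc is heavier than necessary: since $t_2^2+t_4^2=s^2$ this sum equals $r(t_2+t_4)-s^2/2$, and $(t_2+t_4)^2=s^2+2t_2t_4\ge s^2$ finishes it in one line.
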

\begin{proof}
Parts (i) and (iii) are easy. 
 To prove parts (ii) and (iv), we  
observe first that, in the disk with centre $(0,0)$ and radius $r$, the set $S$ of points $(x,y)$ in the disk with $-s \leq x \leq 0$ and $y \geq 0$ has area at least $s(r-s/2)$.  For if $(-s,y_1)$ is the point on the bounding circle with $y_1>0$, then $y_1=\sqrt{r^2-s^2} \geq r-s$, so the quadrilateral $Q$ with corners $(0,0), (-s,0), (-s,y_1)$ and $(0,r)$ has area $\geq \frac12 s(r+r-s)$, and $Q \subseteq S$.

\begin{figure}\label{fig.extraarea}
\scalebox{0.85}{%
\begin{tikzpicture}
 \coordinate (A) at (0,0);
\coordinate (B) at (2,1.5);
 \coordinate (C) at (0,6);
 \coordinate (D) at (7,0);
\coordinate (E) at (2.5, 0);
\coordinate (F) at (0,2.5);
\coordinate (G) at (2,7);
 \coordinate (H) at (6,1.5);
\coordinate (J) at (2,0);
\coordinate (K) at (0,1.5);
\fill[red]  (B) circle [radius=0.1];
\draw [thick] (C)--(A)--(D);
\draw [black] (B)--(5.7,0);
\draw [blue,dashed,thick] (K)--(B);
\draw [blue,dashed,thick] (J)--(B);
\draw[black] (B)--(H);
\draw [red,thin] (A)--(B);
\draw     [red] (2.5,0) arc[start angle=0, end angle=90, radius=2.5];
\draw [black] (5.7,0) arc[start angle=-21.5, end angle=118.5, radius=4.04];
\draw[red,dashed] (1,0) arc[start angle=0, end angle=70, radius=.5];
\node [left] at (A) {{\footnotesize{$(-\sqrt{n}/2, -\sqrt{n}/2)$}}};
\node [right,red] at (2,1.7) {${\bf z}=(x,y)$};
\node [blue,left ] at (2,0.7) {$y$};
\node [above] at (1,0.7) {$s$};
\node [below] at (4.1,0.7) {$r$};
\node [below] at (4.2,1.5) {$r$};
\node [right] at (1,0.3) {$\theta$};
\node [below,] at (1.1,0) {$s$};
\draw [dashed,<-] (0,-0.2)--(1.1,-0.2);
\draw [dashed,->] (1.35,-0.2)--(2.5,-0.2);
\node [blue,above] at (1,1.5) {$x$};
\end{tikzpicture}}
\caption{Figure for the proof of Lemma~\ref{lem.extraarea}(ii)}\label{fig10}
\end{figure}
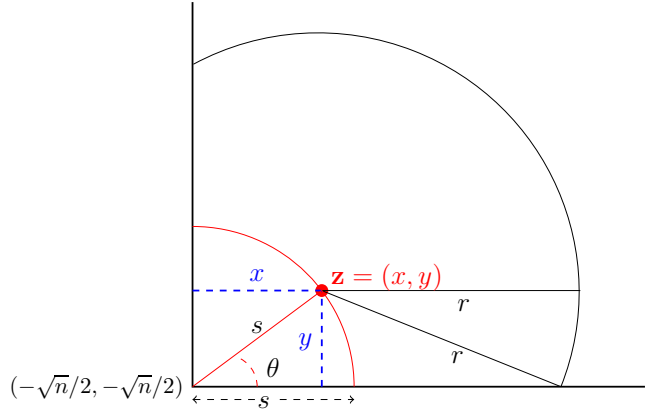
To prove part (ii) of the lemma (see Figure~\ref{fig10}), 
it suffices to consider points $\z \in \SR$ at distance equal to $s$ from a corner, wlog from the bottom left corner $c_1= (-\sqrt{n}/2, -\sqrt{n}/2)$.
%
Suppose that $\z -c_1 = (x,y)$, so $x,y \ge 0$.  Then, by the observation in the first paragraph, 
$$
  \sigma_n(\z) - \tfrac14 \pi r^2 \geq xy+ x(r-x/2) + y(r-y/2) \geq x(r-x/2)+y(r-y/2).
  $$
  Now, set $x= s \cos \theta$, $y=s \sin \theta$ for some $0 \le \theta \le \pi/2$, and the right hand side of the previous inequality can be written as
  \begin{eqnarray*}
  f(\theta) &:=& s \cos \theta (r - \tfrac12 s \cos \theta)+ s \sin \theta(r - \tfrac12 s \sin \theta) \\
  &=& sr (\cos \theta + \sin \theta) - \tfrac12 s^2 \cos^2 \theta - \tfrac12 s^2 \sin^2 \theta \\
  &=& sr(\cos \theta + \sin \theta) - \tfrac12 s^2. 
  \end{eqnarray*}
We have $f''(\theta) = - sr(\cos \theta + \sin \theta) < 0$
for $0 < \theta < \pi/4$, 
so $f$ is concave, and thus $f(\theta)$ is minimised over $0 \leq \theta \leq \pi/4$ at $\theta=0$ or $\theta=\pi/2$. But $f(0)=f(\pi/2)=sr-\frac12s^2 = s(r- s/2)$, and part (ii) follows. 
Part (iv) follows similarly from the initial observation. 
\end{proof}

We shall depend heavily on the following 
result on the relation between graph distance and Euclidean distance for random geometric graphs (with slightly worse constants than the ones given in the original paper to make the expression cleaner).
\begin{lemma}\cite{DMPP14+}[Theorem 1.1]\label{Thm:DMPP}
Let $G\in\RG$ be a random geometric graph with $r \gg \sqrt{\log n}$. Then, whp, for every pair of vertices $u,v$ we have:
$$ d_G(u,v) \leq \left\lceil \frac{d_E(\X_u, \X_v)}{r}\left(1+\gamma\, r^{-4/3}\right) \right\rceil$$
where
$$\gamma=\max\left\{3000 \left(\frac{r\log{n}}{r+d_E(\X_u,\X_v)}\right)^{2/3},\;\frac{4 \cdot 10^6\log^2{n} }{r^{8/3}},\; 1000\right\} .$$
\end{lemma}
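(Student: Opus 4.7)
The plan is to prove this via an explicit greedy path construction: for any pair $u, v$, start at $\X_u$ and at each step hop to the neighbour within distance $r$ that most reduces the Euclidean distance to $\X_v$, stopping when $v$ is reachable in one edge. The bound then reduces to lower-bounding the progress (reduction in distance to $\X_v$) made at each individual hop.

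Place coordinates so the current position $p$ lies at the origin and $\X_v$ is on the positive $x$-axis at remaining distance $D' \le D := \dE(\X_u, \X_v)$. A candidate at $(x,y) \in B(0,r) \cap \SR$ reduces the distance to $\X_v$ by $D' - \sqrt{(x-D')^2 + y^2}$, which for $D' \gg r$ is well approximated by $x - y^2/(2D')$; hence the best hop achieves progress at least $r - \eps$ as soon as the circular segment
\[ S_\eps := \{(x,y) \in B(0,r) : x \ge r - \eps\} \]
contains at least one random point. Since the area of $S_\eps$ is $\Theta(r^{1/2} \eps^{3/2})$ (a chord segment of ``height'' $\eps$ in a disk of radius $r$), Fact~\ref{factBinom} and Chernoff (Lemma~\ref{lem:Chernoff}) give emptiness probability at most $\exp(-c\, r^{1/2} \eps^{3/2})$. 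Choosing $\eps = (C \log n)^{2/3} r^{-1/3}$ for a sufficiently large constant $C$ renders this $\le n^{-C}$. Union-bounding over a polynomial-size grid of base points and directions, whp every admissible greedy hop achieves progress at least $r - C' (\log n)^{2/3} r^{-1/3}$, so the total number of hops is at most
\[ \bigg\lceil \frac{D}{r - C'(\log n)^{2/3} r^{-1/3}} \bigg\rceil = \bigg\lceil \frac{D}{r} \bigl(1 + O((\log n)^{2/3} r^{-4/3})\bigr) \bigg\rceil, \]
which matches the exponent $-4/3$ in the lemma. The refined form $3000\, (r \log n /(r+D))^{2/3}$ (in place of $(\log n)^{2/3}$) would come from a sharper accounting of the $y^2/(2D')$ loss as the path approaches $\X_v$: for $D' = \Theta(r)$ one can tolerate a larger transversal $y$ in exchange for a smaller required $x$, enlarging the effective segment and lowering the exponent in the Chernoff estimate. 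The alternative branches $4 \cdot 10^6 \log^2 n /r^{8/3}$ and the constant $1000$ in $\gamma$ cover, respectively, the borderline regime $r \approx \sqrt{\log n}$ (where the segment $S_\eps$ is large relative to the disk and the local expansion breaks down) and a trivial lower bound.

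The main obstacle is converting the heuristic per-hop Chernoff bound into a genuinely uniform statement: the greedy choice at step $i$ depends on the previous choices, so one cannot directly apply Chernoff hop by hop. The clean way out is to prove the stronger ``oracle'' statement that whp every translate-and-rotate of $S_\eps$ whose centre lies in $\SR$ and whose principal direction belongs to a polynomially fine discretisation contains a random point; this needs no conditioning on the realised path, and the greedy algorithm can then invoke the oracle at each step for free. Boundary effects, when $p$ is within distance $r$ of $\partial \SR$ and part of $S_\eps$ is clipped, are handled by a case analysis: either enough of $S_\eps$ still lies inside $\SR$, or one tilts the greedy aim slightly inward, incurring only an $O(1)$ additive cost that is absorbed into the ceiling.
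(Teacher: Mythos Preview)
The paper does not prove this lemma at all: it is quoted verbatim (with slightly relaxed constants) from Theorem~1.1 of~\cite{DMPP14+}, and is used as a black box. So there is no ``paper's own proof'' to compare against here.

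That said, your sketch is indeed the right skeleton for the result in~\cite{DMPP14+}: a greedy path that at each step jumps into a thin circular segment of height $\eps$ near the far side of the current $r$-ball, using that the segment has area $\Theta(r^{1/2}\eps^{3/2})$ and hence is nonempty with high probability once $\eps \asymp (\log n)^{2/3} r^{-1/3}$. This recovers the $r^{-4/3}$ correction and is the mechanism behind the cited theorem.

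Where your write-up remains a sketch rather than a proof: (i) the ``oracle'' uniformisation you propose---union-bounding over a polynomial grid of centres and directions---needs the segment occupancy to be robust under perturbations of order the grid mesh, and you have not checked that a mesh fine enough for this is still polynomial in $n$; (ii) the refinement from $(\log n)^{2/3}$ to $(r\log n/(r+D))^{2/3}$ is asserted but not carried out, and it is exactly this sharpening (trading transversal slack against remaining distance $D'$) that produces the first branch of $\gamma$; (iii) the boundary case analysis and the roles of the second and third branches of $\gamma$ are only gestured at. None of these is a wrong idea, but each would need to be executed to reach the stated bound with the stated constants.
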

We observed earlier that always $d_E(\x_u,\x_v) \leq r d_G(u,v)$; we next give a corollary of the last lemma which shows that whp this bound is quite tight.
\begin{corollary} \label{cor.dists}
There is a constant $c$  ($\, \leq 6\cdot10^6$) 
such that, if $r \geq (\log n)^{3/4}$ 
 for $n$ sufficiently large, then whp, for every pair of vertices $u,v$ we have: 
$$
d_G(u,v) \le d_E(\X_u,\X_v)/r + 1 + c \max \{ n^{1/2}r^{-7/3}, n^{1/6} r^{-5/3} (\log n)^{2/3} \}.
$$
\end{corollary}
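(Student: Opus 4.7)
The plan is to start from Lemma~\ref{Thm:DMPP}, strip off the ceiling with $\lceil x\rceil \le x+1$, and expand. Writing $D = \dE(\X_u,\X_v)$, this gives, whp simultaneously for every pair of vertices,
\[ d_G(u,v) \;\le\; \frac{D}{r} + 1 + \frac{D\,\gamma}{r^{7/3}}, \]
so the task reduces to bounding $D\gamma/r^{7/3}$ by $c\max\{n^{1/2} r^{-7/3},\,n^{1/6} r^{-5/3}(\log n)^{2/3}\}$ for an absolute constant $c \le 6\cdot 10^6$. I would use the crude global bound $D\le\sqrt{2n}$ (the diameter of $\SR$) throughout.

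Since $\gamma$ is the maximum of three candidates, I would bound $D\gamma/r^{7/3}$ separately for each candidate and then take the larger of the resulting bounds. The two ``constant-type'' candidates are $1000$ and $4\cdot 10^6(\log n)^2/r^{8/3}$; this is precisely where the hypothesis $r\ge(\log n)^{3/4}$ enters, since it forces the latter to be at most $4\cdot 10^6$. Combining with $D\le\sqrt{2n}$, each of these two cases contributes at most $4\sqrt{2}\cdot 10^6\cdot n^{1/2} r^{-7/3} < 6\cdot 10^6\cdot n^{1/2} r^{-7/3}$, which accounts for the first term inside the $\max$.

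The remaining candidate, $3000\bigl(r\log n/(r+D)\bigr)^{2/3}$, is the interesting one. Substituting into $D\gamma/r^{7/3}$ produces
\[ \frac{3000\,D(\log n)^{2/3}}{r^{5/3}(r+D)^{2/3}}, \]
and the key elementary inequality I would use is $D/(r+D)^{2/3}\le D^{1/3}$ (equivalently $D^{2/3}\le(r+D)^{2/3}$). Combined once more with $D\le\sqrt{2n}$, this bounds the contribution by $3000\cdot 2^{1/6}\cdot n^{1/6}(\log n)^{2/3}r^{-5/3}$, comfortably below $6\cdot 10^6\cdot n^{1/6}(\log n)^{2/3}r^{-5/3}$, which is the second term inside the $\max$. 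Nothing here is technically deep; the only real care needed is to keep the constants explicit so the final $c$ stays below the advertised $6\cdot 10^6$, and to remember that Lemma~\ref{Thm:DMPP} holds whp uniformly over all pairs, so the derived inequality does too.
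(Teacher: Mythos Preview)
Your proposal is correct and follows essentially the same route as the paper: both start from Lemma~\ref{Thm:DMPP}, remove the ceiling, and bound $D\gamma r^{-7/3}$ by treating the three candidates for $\gamma$ separately, using $r\ge(\log n)^{3/4}$ to cap the $(\log n)^2/r^{8/3}$ term at $4\cdot 10^6$, the inequality $D/(r+D)^{2/3}\le D^{1/3}$ for the first candidate, and $D\le\sqrt{2n}$ throughout. The only cosmetic difference is that the paper silently absorbs the constant candidate $1000$ into the $4\cdot 10^6$ bound, while you mention it explicitly.
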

\begin{proof}
By Lemma~\ref{Thm:DMPP}
\begin{equation} \label{eqn.rdG}
r d_G(u,v) \leq d_E(\X_u,\X_v) + r + r \cdot \dE(\X_u,\X_v) \gamma \, r^{-7/3}.
\end{equation}
But, for $r \geq (\log n)^{3/4}$, 
the second term in the maximum in the definition of $\gamma$ is at most $4 \cdot 10^6$; and letting $\gamma_1$ denote the first term we have 
\begin{eqnarray*}
d_E(\X_u,\X_v)\, \gamma_1 \, r^{-7/3} & \leq &
3000 \, d_E(\X_u,\X_v)^{1/3} (r \log n)^{2/3} r^{-7/3}\\
& \leq &
3000 \, (2n)^{1/6} r^{-5/3} (\log n)^{2/3}.
\end{eqnarray*}
Thus
\begin{eqnarray*}
&& d_E(\X_u,\X_v)\, \gamma \, r^{-7/3}\\
& \leq &
\max\{3000 \, (2n)^{1/6} r^{-5/3} (\log n)^{2/3}, (4 \cdot 10^6) (2n)^{1/2} r^{-7/3}\}
\end{eqnarray*}
and the lemma follows from~(\ref{eqn.rdG}).
\end{proof}
In fact, all we shall need from the last two results is the following immediate consequence of the last one.
\begin{corollary} \label{cor.dists2}
If $r \gg n^{3/14}$, then there exists $\eps=\eps(n) = o(1)$ such that whp, for every pair $u,v$ of vertices, we have 
\[d_G(u,v) \leq d_E(\X_u,\X_v)/r + 1 + \eps.\]
\end{corollary}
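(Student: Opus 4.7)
The plan is to apply Corollary~\ref{cor.dists} directly and verify that, under the stronger hypothesis $r \gg n^{3/14}$, the additive error term in its conclusion tends to zero. Concretely, I would define
\[
\eps(n) \;:=\; c \max\bigl\{\, n^{1/2} r^{-7/3},\; n^{1/6} r^{-5/3} (\log n)^{2/3}\,\bigr\},
\]
with $c$ as in Corollary~\ref{cor.dists}, and show that $\eps(n) = o(1)$ whenever $r \gg n^{3/14}$. Once this is done, Corollary~\ref{cor.dists} immediately gives the whp bound $d_G(u,v) \le d_E(\X_u,\X_v)/r + 1 + \eps$ for all pairs $u,v$ simultaneously.

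First, I would check that the hypothesis of Corollary~\ref{cor.dists} is met: since $r \gg n^{3/14}$ and $3/14 > 0$, certainly $r \geq (\log n)^{3/4}$ for $n$ sufficiently large. Next, for the two terms inside the maximum: the assumption $r \gg n^{3/14}$ is precisely equivalent to $r^{7/3} \gg n^{1/2}$, so $n^{1/2} r^{-7/3} = o(1)$. For the second term, the same assumption gives $r^{5/3} \gg n^{5/14}$, and since $5/14 > 1/6$ with a polynomial gap, we have $r^{5/3} / (n^{1/6} (\log n)^{2/3}) \gg n^{5/14 - 1/6} / (\log n)^{2/3} = n^{4/21}/(\log n)^{2/3} \to \infty$, so this term is also $o(1)$. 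Hence $\eps(n) = o(1)$, completing the proof.

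There is no real obstacle here; this is just a calibration of exponents. The only mild point of care is verifying that the threshold $3/14$ in the hypothesis exactly matches the dominant term $n^{1/2} r^{-7/3}$ in the error bound of Corollary~\ref{cor.dists} (this is what forces the value $3/14$ to appear in the statement of Theorem~\ref{thm:main}), and that the secondary term $n^{1/6} r^{-5/3} (\log n)^{2/3}$ is indeed weaker under the same hypothesis, as checked above.
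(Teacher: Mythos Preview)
Your proposal is correct and is precisely the argument the paper has in mind: the paper simply states that Corollary~\ref{cor.dists2} is an ``immediate consequence'' of Corollary~\ref{cor.dists}, and your exponent check (that $r \gg n^{3/14}$ is exactly equivalent to $n^{1/2}r^{-7/3}=o(1)$, while the second term is strictly weaker) is the intended verification.
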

\smallskip

We consider the four corner points $c_i=c_i(n)$ of $\SR$ in clockwise order from the bottom left: $c_1= (-\sqrt{n}/2, -\sqrt{n}/2)$ (already defined), $c_2 = (-\sqrt{n}/2, \sqrt{n}/2)$, $c_3 = (\sqrt{n}/2, \sqrt{n}/2)$ and $c_4 = (\sqrt{n}/2, -\sqrt{n}/2)$.  See Figure~\ref{fig:corner} for the points $c_i$ and to illustrate the following lemma.

\begin{lemma} \label{lem.corners}
Let $r=r(n)$ satisfy $\sqrt{\log n} \ll r \ll \sqrt{n}$ 
and consider the random geometric graph $G \in \RG$. 
Let $\omega=\omega(n)$ tend to infinity with $n$ arbitrarily slowly, and in particular assume that $\omega^2 \leq r/2$ and $\omega \ll r/\sqrt{\log n}$.
Then whp the following holds:
(a) for each $i=1,\ldots,4$, there exists $v_i \in V$ such that $\X_{v_i} \in B(c_i,\omega)$ and $\deg_G(v_i) < \tfrac14 \pi r^2+ \tfrac13 \omega r$;
 and (b) for each $v \in V$ such that $\X_v \not\in \cup_{i=1}^4 B(c_i,\omega)$ we have $\deg_G(v) > \tfrac14 \pi r^2+ \tfrac12 \omega r$.
\end{lemma}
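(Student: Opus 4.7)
I would combine the area bounds of Lemma~\ref{lem.extraarea} with the conditional binomial structure of Fact~\ref{factBinom} and the Chernoff bounds of Lemma~\ref{lem:Chernoff}.

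For part~(a), fix a corner $c_i$ and set $s_0=\omega/10$. The count $|I_i|:=|\{v:\X_v\in B(c_i,s_0)\cap\SR\}|$ is binomial with mean $\pi s_0^2/4=\pi\omega^2/400\to\infty$, so whp $|I_i|\ge 1$. For each $v\in I_i$, Lemma~\ref{lem.extraarea}(i) gives $\sigma_n(\X_v)\le \pi(r+s_0)^2/4\le \tfrac14\pi r^2+\pi\omega r/20+O(\omega^2)$, and by Fact~\ref{factBinom}, $\deg_G(v)$ is conditionally $\mathrm{Bin}(n-1,\sigma_n(\X_v)/n)$. Since $\pi/20<1/3$ and $\omega^2\ll\omega r$ (because $\omega^2\le r/2$), a Chernoff upper-tail bound with deviation $\Theta(\omega r)$ gives per-vertex failure probability $\exp(-\Omega(\omega^2))$; multiplied by $\E|I_i|=O(\omega^2)$, Markov's inequality ensures that whp every $v\in I_i$ has $\deg_G(v)<\tfrac14\pi r^2+\tfrac13\omega r$. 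Unioning over the four corners finishes~(a).

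For part~(b), a naive union bound over all $n$ vertices would demand $\omega\gg\sqrt{\log n}$, which is not assumed; instead I split $W:=\SR\setminus\bigcup_{i=1}^4 B(c_i,\omega)$ into two pieces according to distance from the nearest corner. When $\dE(\X_v,c_i)\ge r$ for every $i$, Lemma~\ref{lem.extraarea}(ii) with $s=r$ yields $\sigma_n(\X_v)\ge \tfrac14\pi r^2+r^2/2$, exceeding the threshold by $\Omega(r^2)$; the Chernoff lower-tail exponent is $\Omega(r^2)\gg\log n$, so a union bound over the $\le n$ such vertices gives $o(1)$. Otherwise $\dE(\X_v,c_i)=s\in[\omega,r]$ for some $i$; Lemma~\ref{lem.extraarea}(ii) yields conditional mean at least $\tfrac14\pi r^2+s(r-s/2)$, so the gap $t(s)\ge sr-s^2/2-\tfrac12\omega r$ equals $\tfrac12\omega r(1-o(1))$ at $s=\omega$ (using $\omega^2\le r/2$) and is increasing on $[\omega,r]$. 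The per-vertex Chernoff lower-tail bound at corner-distance $s$ is at most $\exp(-t(s)^2/(2\pi r^2))$, and the expected number of vertices in a quarter-annulus $[s,s+ds]$ around a corner is $(\pi s/2)\,ds$. Integrating, the expected bad count per corner is $\int_\omega^r(\pi s/2)\exp(-t(s)^2/(2\pi r^2))\,ds=O(\exp(-\omega^2/(8\pi)))=o(1)$, and Markov's inequality completes~(b).

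\textbf{Main obstacle.} The delicate step is the near-corner subregion of~(b): the gap of only $\Theta(\omega r)$ produces Chernoff exponent $\Theta(\omega^2)$, far too small for a blanket union bound over all $n$ vertices when $\omega$ grows slowly. The rescue is that vertices attaining this minimum gap must lie in a thin quarter-annulus around corner-distance $\omega$, so the first-moment integration over corner-distance becomes a convergent Gaussian-tail integral that tends to $0$ for any $\omega\to\infty$; this is the only place in the proof where one cannot afford a simple uniform union bound.
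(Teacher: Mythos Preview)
Your proposal is correct and follows essentially the same approach as the paper: both proofs use Lemma~\ref{lem.extraarea} plus Fact~\ref{factBinom} plus Chernoff, and both handle the delicate near-corner region in~(b) by weighting the Chernoff tail by the (small) expected number of vertices at corner-distance~$s$ rather than by~$n$. The only cosmetic differences are that the paper uses $\omega/7$ where you use $\omega/10$, the paper splits the region in~(b) at corner-distance $j_1=\lceil r/\omega\rceil$ with a discrete sum over unit annuli $B_i^{j+1}\setminus B_i^{j}$ while you split at corner-distance~$r$ and integrate, and your integral bound should really read $O(\omega^2\exp(-\omega^2/(8\pi)))$ rather than $O(\exp(-\omega^2/(8\pi)))$---but this is of course still $o(1)$.
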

\begin{proof}
(a) Fix $i \in [4]$.  Note first that
\[ \pr(\X_v \not\in B(c_i,\tfrac17 {\omega}) \mbox{ for each } v \in V) = (1- \tfrac{\pi}{4n} (\tfrac{\omega}{7})^2)^n \leq e^{- \tfrac{\pi}{196} \omega^2} = o(1);\]
so whp there exists $v_i \in V$ such that $\X_{v_i} \in B(c_i,\frac17 \omega)$. 
Let $Z_n^{(i)}$ be the number of vertices $v$ such that $\X_v \in B(c_i,\tfrac1{7} \omega)$.  Then $\E[Z_n^{(i)}] = \tfrac{\pi}{196} \omega^2$.
For each $\x \in B(c_i,\frac1{7} \omega)$, by Lemma~\ref{lem.extraarea} (i),
\[\sigma_n(\x) \leq \tfrac14 \pi(r+\tfrac1{7} \omega)^2 \leq \tfrac14 \pi r^2 + \tfrac14 \omega r\]
for $n$ sufficiently large; and then, by Lemma~\ref{lem:Chernoff} and Fact~\ref{factBinom}, for each vertex~$v$
\begin{eqnarray*}
&&
\pr(\deg_G(v) \geq \tfrac14 \pi r^2+ \tfrac13 \omega r\mid \X_v \in B(c_i,\tfrac1{7} \omega))\\
& \leq &
\pr( \Bin(n,(\tfrac14 \pi r^2 + \tfrac14 \omega r)/n) \, \geq \, (\tfrac14 \pi r^2 + \tfrac14 \omega r) + \tfrac1{12} \omega r) \; \leq \;
e^{-\Theta(\omega^2)}.
\end{eqnarray*}
Hence
\begin{eqnarray*}
&& \pr(\mbox{for some } v \in V, \, (\X_v \in B(c_i,\tfrac1{7} \omega)) \land (\deg_G(v) \geq \tfrac14 \pi r^2 + \tfrac13 \omega r))\\
& \leq &
 \sum_{v \in V} \pr(\X_v \in B(c_i,\tfrac1{7} \omega))\, \pr(\deg_G(v) \geq \tfrac14 \pi r^2 + \tfrac13 \omega r \mid \X_v \in B(c_i,\tfrac1{7} \omega))\\
 & \leq &
 \E[Z_n^{(i)}] \, e^{-\Theta(\omega^2)} \; = \; o(1).
\end{eqnarray*} 
Thus whp there exists $v_i \in V$ such that $\X_{v_i} \in B(c_i,\tfrac1{7} \omega)$ and $\deg_G(v_i) < \tfrac14 \pi r^2 + \tfrac13 \omega r$.
This gives part (a) of the lemma.\\

(b) 
Let $j_0= \lfloor \omega \rfloor$ and $j_1= \lceil r / \omega \rceil$. 
For all integers $i \in [4]$ 
and $ j_0 \leq j \leq j_1$, let $B_i^j= B(c_i,j) \cap \SR$.  Consider first the central part of the square $\SR$, omitting parts near the corners: let $C_n =\SR \setminus \cup_i B_i^{j_1}$.  By Lemma~\ref{lem.extraarea} (ii), for each $\x \in C_n$ we have 
\[\sigma_n(\x) \geq \tfrac14 \pi r^2 +j_1(r-j_1/2) \geq \tfrac14 \pi r^2 + \tfrac12 j_1 r \]
for $n$ sufficiently large.
Hence, by Lemma~\ref{lem:Chernoff} and Fact~\ref{factBinom},
\[ \pr(\deg_G(v) \leq \tfrac14 \pi r^2 + \tfrac12 \omega r \mid \X_v \in C_n) \, \leq  \, e^{-\Theta(j_1^2)}.\]
Since $\omega \ll r/\sqrt{\log n}$, we have $j_1^2 \geq (r/\omega)^2 \gg \log n$. 
Thus $n e^{-\Theta(j_1^2)} = o(1)$, and so 
 whp there is no vertex $v$ such that $\X_v \in C_n$ and $\deg_G(v) \leq \tfrac14 \pi r^2 + \tfrac12 \omega r$.

We need a little more care near the corners.
Let $i \in [4]$ and let $j$ be an integer with 
$ j_0 \leq j \leq j_1$.  The area of 
$B_i^{j+1} \setminus B_i^j$ is $\frac14 \pi (2j+1)$.  For each point 
$\x \in B_i^{j+1} \setminus B_i^j$,  $\x$ is at distance at least $j$ from each corner of $\SR$, so by Lemma~\ref{lem.extraarea} (ii)
we have 
\[ \sigma_n(\x) \geq \sigma^{(j)} := \tfrac14 \pi r^2 + j(r-j/2) = \tfrac14\pi r^2 + (1+o(1)) jr.\]
Also, 
\[ \tfrac14 \pi r^2 + \tfrac12 \omega r \leq \tfrac{n-1}{n} \sigma^{(j)}  - (\tfrac12 +o(1)) jr.\]
Thus, by Lemma~\ref{lem:Chernoff} and Fact~\ref{factBinom},
\[ \pr\big(\deg_G(v) \leq \tfrac14 \pi r^2 + \tfrac12 \omega r \mid \X_v \in B_{i}^{j+1} \setminus B_{i}^j \big) \le e^{-\Theta(j^2)}.\] 
Therefore, for each $i \in [4]$,
\begin{eqnarray*}
&&
\pr\big(\mbox{for some } v \in V,\, (\X_v \in B_{i}^{j_1} \setminus B_{i}^{j_0}) \land ( \deg_G(v) \leq  \tfrac14 \pi r^2 + \tfrac12 \omega r) \big)\\
& \leq &
\sum_{v \in V} \sum_{j=j_0}^{j_1-1} \pr(\X_v \in B_{i}^{j+1} \setminus B_{i}^j) \, \pr\big(\deg_G(v) \leq  \tfrac14 \pi r^2 + \tfrac12 \omega r \mid \X_v \in B_{i}^{j+1} \setminus B_{i}^j \big)\\
& \leq &
n \, \sum_{j \geq j_0} \frac{\tfrac14 \pi (2j+1)}{n} e^{-\Theta(j^2)} \; = \; o(1).
\end{eqnarray*}
Hence whp $\deg_G(v)> \tfrac14 \pi r^2 + \tfrac12 \omega r$ for each vertex $v$ such that $\X_v$ is not in one of the four corner regions $B_i^{j_0}$;
 and so we have completed the proof of part~(b).
\end{proof}

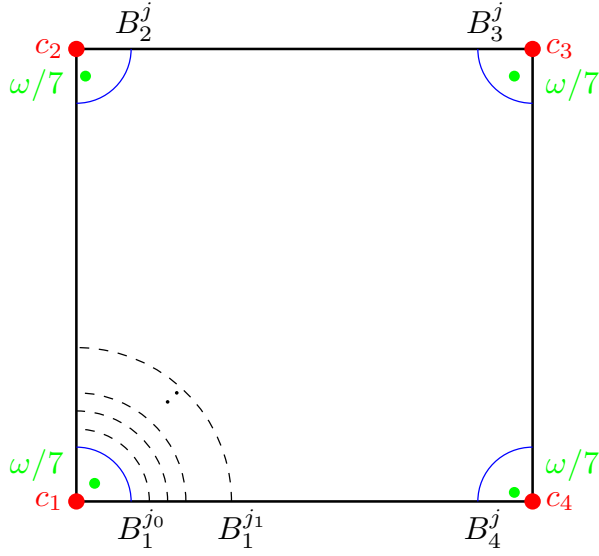
\begin{figure}[h] 
\scalebox{1.2}{%
\begin{tikzpicture}
\coordinate (A) at (0,0);
\coordinate (B) at (5,0);
 \coordinate (C) at (5,5);
 \coordinate (D) at (0,5);
\coordinate (H1) at (0.6, 0);
\coordinate (H2) at (0, 0.6);
\coordinate (I1) at (4.4, 0);
\coordinate (I2) at (5, 0.6);
\coordinate (J) at (4.4,4.4);
\coordinate (J1) at (5,4.4);
\coordinate (J2) at (4.4,5);
\coordinate (K1) at (0,4.4);
\coordinate (K2) at (0.6, 5);
\coordinate (H1) at (0.6, 0);
\coordinate (H2) at (0, 0.6);
\coordinate (L1a) at (0.8, 0);
\coordinate (L1b) at (0.8, 0.8);
\coordinate (L1c) at (0, 0.8);
\coordinate (L2a) at (1, 0);
\coordinate (L2b) at (1,1);
\coordinate (L2c) at (0, 1);
\coordinate (L3a) at (1.2, 0);
\coordinate (L3b) at (1.2,1.2);
\coordinate (L3c) at (0, 1.2);
\coordinate (L4a) at (1.7, 0);
\coordinate (L4b) at (1.7,1.7);
\coordinate (L4c) at (0, 1.7);
\coordinate (Pa) at (1,1.1);
\coordinate (Pb) at (1.1,1.2);
\coordinate (Pc) at (1.5,1.5);
\coordinate (Pd) at (1.6,1.6);
 \coordinate (Aa) at (0.2,0.2);
\coordinate (Bb) at (4.8,0.1);
\coordinate (Cc) at (4.8,4.7);
\coordinate (Dd) at  (0.1,4.7);
\draw [thick] (A)--(B)--(C)--(D)--(A);
\fill[green]  (Aa) circle [radius=0.06];
\fill[green] (Bb) circle [radius=0.06];
\fill[green] (Cc) circle [radius=0.06];
\fill[green] (Dd) circle [radius=0.06];
\fill[red]  (A) circle [radius=0.09];
\fill[red] (B) circle [radius=0.09];
\fill[red] (C) circle [radius=0.09];
\fill[red] (D) circle [radius=0.09];
\draw [blue] (H1)  to [out=90,in=0]  (H2);
\draw [blue] (I1)   to [out=90,in=180]  (I2);
\draw [blue] (J1) to [out=180,in=270]  (J2);
 \draw [blue] (K1) to [out=0,in=270]  (K2);
\fill[black] (Pa) circle [radius=0.02];
\fill[black] (Pb) circle [radius=0.02];
\draw [black,dashed] (L1a)  to [out=90,in=0] (L1c);
\draw [black,dashed] (L2a) to [out=90,in=0]  (L2c);
\draw [black,dashed] (L3a) to [out=90,in=0] (L3c);
\draw [black,dashed] (L4a) to [out=90,in=0] (L4c);
\node[right] at (0.3,-0.3) {\footnotesize{$B_1^{j_0}$}};
\node[right] at (4.1,-0.3) {\footnotesize{$B_4^j$}};
\node [left] at (1,5.3) {\footnotesize{$B_2^j$}};
\node [left] at (4.85,5.3) {\footnotesize{$B_3^j$}};
\node [right] at (1.4,-0.3) {\footnotesize{$B_1^{j_1}$}};
\node[left, green] at (0,0.4) {\footnotesize{$\omega/7$}};
\node[right, green] at (5,0.4) {\footnotesize{$\omega/7$}};
\node[right, green] at (5,4.6) {\footnotesize{$\omega/7$}};
\node[left, green] at (0,4.6) {\footnotesize{$\omega/7$}};
\node[right,red] at (B) {\footnotesize{$c_4$}};
\node[left,red] at (A) {\footnotesize{$c_1$}};
\node[right,red] at (C) {\footnotesize{$c_3$}};
\node[left,red] at (D) {\footnotesize{$c_2$}};
\end{tikzpicture}}
\caption{Choosing points in the 4 corners of the $\sqrt{n}\times \sqrt{n}$ square $\SR$}\label{fig:corner}
\end{figure}

The above lemma shows us how to find $4$ vertices $v$ 
such that whp the corresponding points $\X_v$ are close to the four corner points $c_i$ of $\SR$.
\begin{lemma}\label{lem:corner}
Let $r=r(n)$ satisfy
$\sqrt{\log n} \ll r \ll \sqrt{n}$,
 and consider the random geometric graph $G=\RG$. %
Let $\omega=\omega(n)$ be any function tending to infinity as $n \to \infty$. 
There is a polynomial-time (in $n$) algorithm which, on input $A(G)$, finds four vertices $v_1,v_2,v_3,v_4$ such that whp the following holds: for some (unknown) symmetry $\pi$ of $\SR$, 
\[ \X_{v_i} \in B(\pi(c_i), \omega) \;\; \mbox{ for each } i \in [4]. \] 
\end{lemma}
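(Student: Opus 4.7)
The plan is to exploit Lemma~\ref{lem.corners}: vertex degree alone separates vertices whose geometric position lies close to some corner of $\SR$ from those far from every corner. I would then cluster the corner candidates into four groups (one per corner) using graph distances, and finally choose a labeling that matches a genuine symmetry of $\SR$ rather than an arbitrary permutation of four points.

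Concretely, I would first ensure $\omega$ satisfies the bounds $\omega^2 \le r/2$ and $\omega \ll r/\sqrt{\log n}$ required by Lemma~\ref{lem.corners}; if not, I replace it by a suitable $\omega'(n) \to \infty$ with $\omega' \le \omega$ (for example $\omega' = \min(\omega,\, r/(\sqrt{\log n}\,\log\log n))$), which is harmless since $B(c_i,\omega') \subseteq B(c_i,\omega)$. Using Proposition~\ref{prop.rhat} I would then compute $\hat r$ in $O(n^2)$ time with $\hat r = (1+o(1))r$ whp, read off all vertex degrees from $A(G)$ and define
\[ S \;=\; \bigl\{\, v \in V :\; \deg_G(v) < \tfrac14\pi\hat r^2 + \tfrac{5}{12}\,\omega\,\hat r \,\bigr\}. \]
Since the threshold lies strictly between the bounds $\tfrac14\pi r^2+\tfrac13\omega r$ and $\tfrac14\pi r^2+\tfrac12\omega r$ appearing in Lemma~\ref{lem.corners} (even after absorbing the $o(1)$ error in $\hat r$), whp every $v \in S$ lies in some corner ball $B(c_i,\omega)$, and each of the four corner balls meets $S$.

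Next I would run a BFS from each vertex to obtain all graph distances in polynomial time. For $u,v \in S$ inside the same corner ball, $\dE(\X_u,\X_v) \le 2\omega = o(r)$, so Lemma~\ref{Thm:DMPP} gives $d_G(u,v) = O(1)$ whp; for $u,v \in S$ in distinct corner balls, the trivial inequality $d_G \ge \dE/r$ together with $\dE(\X_u,\X_v) \ge \sqrt n - 2\omega$ gives $d_G(u,v) \ge (1+o(1))\sqrt n/r \to \infty$. Thresholding at $T = \lceil \sqrt n/(3\hat r)\rceil$, the relation ``$d_G \le T$'' therefore partitions $S$ into exactly four classes whp, one per corner.

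Finally, I would pick one representative from each class, call them $u^{(1)},\ldots,u^{(4)}$, and compute their six pairwise graph distances. Adjacent corners give distances close to $\sqrt n/r$, while the two diagonal pairs give strictly larger distances close to $\sqrt{2n}/r$, so the diagonal pairs are distinguishable whp. I set $v_1 := u^{(1)}$, take $v_3$ to be the representative at maximum graph distance from $v_1$, and assign the remaining two to $v_2, v_4$ in either order. Then $\{v_1,v_3\}$ and $\{v_2,v_4\}$ are the two diagonals, so the sequence $v_1,v_2,v_3,v_4$ traverses the four corners in one of the two adjacency-preserving cyclic orders; the resulting bijection from $[4]$ to the corners is thus one of the eight elements of the dihedral group $D_4$ acting on $\{c_1,\ldots,c_4\}$, and gives the desired symmetry $\pi$ of $\SR$ with $\X_{v_i} \in B(\pi(c_i),\omega') \subseteq B(\pi(c_i),\omega)$ for every $i \in [4]$ whp. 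The main (mild) obstacle is precisely this final step: pinning down the diagonal pairs is what forces the labeling to correspond to a true symmetry of $\SR$ rather than an arbitrary permutation of the four corner points.
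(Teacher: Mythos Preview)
Your approach is correct and shares the paper's overall skeleton: use Lemma~\ref{lem.corners} to isolate vertices near corners via degree, then use graph distances to pin down which pair is diagonal so that the labeling $v_1,v_2,v_3,v_4$ corresponds to a genuine symmetry of $\SR$. The final labeling step (pick $v_1$, take $v_3$ at maximal graph distance, assign the remaining two to $v_2,v_4$) is exactly what the paper does.

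Where you differ is the middle step. You compute $\hat r$, set a numerical degree threshold $\tfrac14\pi\hat r^2+\tfrac{5}{12}\omega\hat r$, collect all sub-threshold vertices into $S$, and then cluster $S$ into four groups via BFS and a graph-distance cutoff. The paper avoids all of this: it simply picks a vertex of minimum degree, marks it and its neighbours, and repeats three more times on the unmarked vertices. By Lemma~\ref{lem.corners} each chosen vertex is whp within $\omega$ of some corner, and the marking guarantees no two land near the same corner (two points in the same ball $B(c_i,\omega)$ are adjacent, since $2\omega \le r$). This sidesteps both the estimation of $r$ and the clustering argument, and yields an algorithm that does not depend on $\omega$ or $r$ at all. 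Your route works, but is heavier machinery for the same conclusion.

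One small bug: your example $\omega'=\min(\omega,\,r/(\sqrt{\log n}\log\log n))$ satisfies $\omega'\ll r/\sqrt{\log n}$ but not necessarily $\omega'^2\le r/2$ (take $r$ close to $\sqrt n$). You should also include $\sqrt{r/2}$ in the minimum. This is cosmetic, but since you explicitly invoke both hypotheses of Lemma~\ref{lem.corners} it is worth getting right.
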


\begin{proof}
Consider the following algorithm: pick a vertex of minimal degree, call it $u_1$, and mark $u_1$ and all its neighbours. Continue iteratively on the set of unmarked vertices, until we have found four vertices $u_1,\ldots, u_4$.
(Whp each vertex has degree at most $1.1\, \pi r^2$; so after at most 3 steps, at most $3.3 \,\pi r^2 +3=o(n)$ vertices are marked, and so whp we will find $u_1,\ldots,u_4$.)
Let $u_1'$ be a vertex amongst $u_2,u_3,u_4$ maximising the graph distance from $u_1$, and list the four vertices as $v_1,v_2,v_3,v_4$ where $v_1=u_1$ and $v_3=u_1'$ (and $v_2$ and $v_4$ are the other two of the vertices $u_i$ listed in some order).  We shall see that whp $v_1,v_2,v_3,v_4$ are as required.

By Lemma~\ref{lem.corners}, whp the vertices $u_1,\ldots,u_4$
are each within distance $\omega$ of a corner of $\SR$, and the marking procedure ensures that the four corners involved are distinct.
If $u_i$ and $u_j$ 
are such that $\X_{u_i}$ and $\X_{u_j}$ are within distance $\omega$ of opposite corners of $\SR$, then $\dE(\X_{u_i},\X_{u_j}) \geq \sqrt{2n}-2\omega$ and so $d_G(u_i, u_j) \geq (1 \, + \, o(1)) \sqrt{2n}/r$. 
If $\X_{u_i}$ and $\X_{u_j}$ are within distance $\omega$ of adjacent corners, then $\dE(\X_{u_i},\X_{u_j}) \leq \sqrt{n}+ \omega$; and so, since we may assume wlog that $\omega \ll \sqrt{n}$, whp $d_G(u_i, u_j) \leq (1+o(1)) \sqrt{n}/r$ by 
Corollary~\ref{cor.dists}. 
Hence, whp $u_1 = v_1$ and $u'_1 = v_3$ are within distance $\omega$ of opposite corners, as are the other two of the chosen vertices.  For each $i \in [4]$, denote the corner closest to $\X_{v_i}$ by $c_{\sigma(i)}$.  Then
whp $\sigma$ is a permutation of $[4]$, and $c_{\sigma(1)}$ and $c_{\sigma(3)}$ are opposite corners; and so $c_{\sigma(1)},\ldots, c_{\sigma(4)}$ lists the corners of $\SR$ in either clockwise or anticlockwise order. Thus $\sigma$ extends to a (unique) symmetry $\pi$ of $\SR$, and we are done.
 \end{proof}
\vspace{0.3cm}

Having found four vertices $v_1,\ldots, v_4$ such that the points
$\X_{v_i}$ are close to the four corner vertices of $\SR$, for each other vertex $v \in V(G)$ we will be able to use the graph distances {from $v$ to each of} $v_1,\ldots,v_4$ to obtain an approximation to $\X_v$.

\section{Estimating $r$ and Euclidean distances}
\label{sec.randdErev}
In this section, we use the preliminary results from the last section to see how to estimate the threshold distance $r$, and Euclidean distances between points, sufficiently accurately to be able to prove Theorem~\ref{thm:main} in the next section.
We start by considering the number of edges in $G\in\RG$ (with more precision than will be needed here).

\subsection{Expectation and variance of the number of edges in $G\in\RG$}

\begin{proposition} \label{prop.M}
Let $n$ be a positive integer, let $0 < r \leq \sqrt{n}$, and let $M$ be the random number of edges in $G\in\RG$.  Then
\begin{equation} \label{eqn.EM}
  \E[M] = \tfrac12 (n\!-\!1) \pi r^2 \big(1- \tfrac8{3\pi} (r/\sqrt{n}) + \tfrac{1}{2 \pi} (r^2/n) \big)
\end{equation}
and
\begin{equation} \label{eqn.varM}
  \var(M) < \tfrac12\pi nr^2 + 4 \pi^2 \sqrt{n} r^5.
\end{equation}
\end{proposition}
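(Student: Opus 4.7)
For the expectation, write $M = \sum_{\{i,j\}} \mathbf{1}\{d_E(X_i,X_j)\le r\}$; by linearity and exchangeability,
$\E[M]=\binom{n}{2}\, p$, where $p:=\Pr(d_E(X_1,X_2)\le r)$. Computing $p$ reduces to a geometric integral:
\[
p \;=\; \frac{1}{n^2}\int_{\SR}\int_{\SR} \mathbf{1}\{|x-y|\le r\}\, dx\, dy.
\]
I would evaluate this via the translation substitution $u=x-y$, so that the integrand $\int_{\SR} \mathbf{1}\{y\in\SR,\,|u|\le r\}\,dy$ equals $(\sqrt{n}-|u_1|)(\sqrt{n}-|u_2|)\mathbf{1}\{|u|\le r\}$ for $|u_1|,|u_2|\le\sqrt n$. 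Expanding the product gives three polar-type integrals over the disk $B(0,r)$: the constant term gives $n\pi r^2$; the cross terms $\sqrt{n}\int_{B(0,r)}|u_i|\,du$ are easily computed in polar coordinates as $\sqrt n \cdot \tfrac{4r^3}{3}$ each; and the remaining term $\int_{B(0,r)}|u_1||u_2|\,du$ evaluates to $r^4/2$. Collecting terms yields $p=\tfrac{\pi r^2}{n}-\tfrac{8r^3}{3n^{3/2}}+\tfrac{r^4}{2n^2}$, and multiplying by $\binom{n}{2}$ gives~(\ref{eqn.EM}).

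For the variance, expand $\var(M)=\sum_{\{i,j\},\{k,l\}} \mathrm{Cov}(\mathbf{1}_{ij},\mathbf{1}_{kl})$. The terms split into three groups. The \emph{disjoint} pairs $\{i,j\}\cap\{k,l\}=\emptyset$ contribute $0$, since edges on disjoint vertex sets are independent. The \emph{diagonal} terms $\{i,j\}=\{k,l\}$ give $\binom{n}{2}p(1-p)\le \binom{n}{2}p$, which is at most $\tfrac12 n\pi r^2$ using $p\le \pi r^2/n$. The main work is for the \emph{shared-vertex} pairs: for distinct $j,l$,
\[
\mathrm{Cov}(\mathbf{1}_{ij},\mathbf{1}_{il}) \;=\; \E\!\left[\bigl(\sigma_n(X_i)/n\bigr)^2\right]-p^2 \;=\; \var\!\bigl(\sigma_n(X_i)/n\bigr),
\]
by conditioning on $X_i$ and using Fact~\ref{factBinom}.

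The key step (and the only place where care is needed) is bounding $\var(\sigma_n(X))$. The idea is that $\sigma_n(x)=\pi r^2$ whenever $x$ lies in the central square $[-\sqrt{n}/2+r,\sqrt{n}/2-r]^2$ of area $(\sqrt n-2r)^2$, so $\sigma_n(X)$ deviates from $\pi r^2$ only when $X$ falls in the boundary strip of area $n-(\sqrt n-2r)^2\le 4r\sqrt n$. Using $\var(Y)\le \E[(Y-\pi r^2)^2]$ together with the crude pointwise bound $|\sigma_n(x)-\pi r^2|\le \pi r^2$,
\[
\var(\sigma_n(X)) \;\le\; (\pi r^2)^2\cdot \Pr\bigl(d_E(X,\partial\SR)\le r\bigr) \;\le\; \pi^2 r^4\cdot \frac{4r\sqrt{n}}{n} \;=\; \frac{4\pi^2 r^5}{\sqrt n}.
\]
Dividing by $n^2$ and multiplying by the number $n(n-1)(n-2)\le n^3$ of shared-vertex ordered pairs gives a contribution of at most $4\pi^2\sqrt{n}\,r^5$. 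Adding this to the diagonal bound yields~(\ref{eqn.varM}).

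The only real obstacle is the variance of $\sigma_n$; once one recognises that boundary effects confine the fluctuation to a strip of area $O(r\sqrt n)$, the bound drops out, and the expectation is a straightforward (if slightly tedious) polar-coordinate computation.
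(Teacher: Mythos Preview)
Your proof is correct. Both the expectation and variance arguments go through as written; in particular, the bound $\var(\sigma_n(X))\le(\pi r^2)^2\cdot 4r\sqrt{n}/n$ remains valid even when $r>\sqrt{n}/2$ (where the central square is empty), since then $4r\sqrt{n}/n>1$ and the inequality is trivial.

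The overall structure matches the paper's, but the details differ in two places. For the expectation, the paper does not compute the convolution integral directly; instead it proves a separate lemma evaluating, for $Z$ uniform on $[0,1]^2$, the expected area of $B(Z,1)$ cut off by one side ($=2/3$) and by one corner ($=1/8$), then assembles $\E[\sigma_n(Z)]=np$ from these via inclusion--exclusion. Your change of variables $u=x-y$ is more direct and avoids the auxiliary lemma. For the variance, both proofs use the same diagonal/disjoint/shared-vertex decomposition, but the paper bounds the shared-vertex covariance by separately estimating $\E[Y(u,v)Y(u,w)]\le p\cdot\pi r^2/n$ from above and $p>\pi r^2/n-4\pi(r/\sqrt{n})^3$ from below, then subtracting. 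Your identification of this covariance as $\var(\sigma_n(X)/n)$, followed by the boundary-strip bound, is cleaner and makes the source of the $r^5$ factor more transparent.
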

\noindent
Observe that, by~(\ref{eqn.EM}), for $0<r=r(n) \ll \sqrt{n}$ 
in particular it follows that
\begin{equation} \label{eqn.EM2}
  \E[M] = (1+o(1)) \tfrac12 \pi n r^2.
\end{equation}

We will use one lemma in the proof of this proposition.
For each point $z$ in the square $[0,1]^2$, let $f(z)$ be the area of the ball $B(z,1)$ to the right of the line $x=1$, and let $g(z)$ be the area of $B(z,1) \cap [1,2]^2$.
\begin{lemma} \label{lem.outside}
Let the random variable $Z$ be 
uniformly distributed over the square $[0,1]^2$.  Then 
\begin{itemize}
\item $\E[f(Z)] = \alpha = \tfrac23$ 
\item  $\E[g(Z)] = \beta = \tfrac18$. 
\item $\E[B(Z,1) \cap [0,1]^2] =\pi  - 4 \alpha + 4 \beta \approx 0.974926.$
\end{itemize}
\end{lemma}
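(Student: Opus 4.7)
The plan is to compute $\alpha$ and $\beta$ directly by Fubini and a change of variables, and then to obtain the third identity by inclusion--exclusion on the four sides of the square.

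For $\alpha=\E[f(Z)]$, I would write
\[
\E[f(Z)]=\int_0^1\!\!\int_0^1\!\!\int\!\!\int \mathbf{1}[(a-x)^2+(b-y)^2\le 1]\,\mathbf{1}[a>1]\,da\,db\,dx\,dy,
\]
and substitute $u=a-x$, $v=b-y$. The integrand is independent of $y$, so the $y$-integration is trivial; substituting $t=1-x$ reduces the expression to
\[
\int_0^1\!\!\int_{u^2+v^2\le 1,\; u>t} du\,dv\,dt.
\]
Swapping the order and integrating $t\in[0,u]$ first yields $\int_{u^2+v^2\le 1,\, u\ge 0} u\,du\,dv$, which in polar coordinates is $\int_{-\pi/2}^{\pi/2}\!\int_0^1 r\cos\theta\cdot r\,dr\,d\theta=\tfrac{2}{3}$. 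The computation of $\beta=\E[g(Z)]$ is parallel: the same substitutions reduce it to $\int_{u^2+v^2\le 1,\, u,v\ge 0} uv\,du\,dv$, which in polar coordinates equals $\tfrac{1}{4}\int_0^{\pi/2}\sin\theta\cos\theta\,d\theta=\tfrac{1}{8}$.

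For the third formula, I would apply inclusion--exclusion to decompose $B(Z,1)\setminus[0,1]^2$ according to which of the four half-planes $\{a<0\}$, $\{a>1\}$, $\{b<0\}$, $\{b>1\}$ it falls into. The pairwise intersections of opposite half-planes are empty, and the only non-empty pairwise intersections are the four ``outside corner'' quadrants (such as $\{a>1,b>1\}$); all triple intersections are empty. Hence
\[
\pi = \E\big[|B(Z,1)\cap[0,1]^2|\big] + \sum_{\text{sides}}\E[\text{area in that side}] - \sum_{\text{corners}}\E[\text{area in that corner}].
\]
Invoking the symmetries of $[0,1]^2$ (so that $Z$ has the same law as its reflections across the two central axes), each of the four side contributions equals $\alpha$ and each of the four corner contributions equals $\beta$, giving $\E[|B(Z,1)\cap[0,1]^2|]=\pi-4\alpha+4\beta$. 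The numerical value $\pi-8/3+1/2\approx 0.974926$ is then immediate.

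No step presents a real obstacle: the polar-coordinate integrals are elementary, and the only point requiring care is tracking the $+4\beta$ sign in the inclusion--exclusion (the corner quadrants are double-counted among the sides and must be added back, not subtracted).
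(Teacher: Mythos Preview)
Your proof is correct. For $\alpha$, your computation and the paper's are essentially the same integral organised differently: the paper integrates the half-chord length $2\sqrt{1-t^2}$ directly and then switches the order of integration, arriving at $\int_0^1 2t\sqrt{1-t^2}\,dt=\tfrac23$; your polar-coordinate reduction $\int_{u^2+v^2\le1,\,u\ge0}u\,du\,dv$ is just a coordinate-free rewriting of the same step.

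For $\beta$ the routes genuinely diverge. You apply the identical Fubini/polar trick a second time, obtaining $\int_{u^2+v^2\le1,\,u,v\ge0}uv\,du\,dv=\tfrac18$. The paper instead recasts $\E[g(Z)]$ probabilistically: it introduces independent uniforms $X_1,X_2,Y_1,Y_2$ on $[0,1]$, sets $X=X_1+X_2$, $Y=Y_1+Y_2$, and observes that $\E[g(Z)]=\pr(X^2+Y^2\le1)$, which it then computes from the triangular density of $X$. Your approach has the virtue of uniformity---one mechanical device handles both $\alpha$ and $\beta$---while the paper's probabilistic reformulation is a pleasant interpretation that reduces a four-fold integral to a one-dimensional one via the known law of a sum of two uniforms. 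The paper does not spell out the inclusion--exclusion for the third bullet at all (it is effectively absorbed into the subsequent proof of the proposition on $\E[M]$), so your explicit derivation there, with the correct $+4\beta$ sign, is a small improvement in completeness.
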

\begin{proof}

Let $X_1$ be uniformly distributed on $[0,1]$.  Then $\E[f(Z)]$ is the expected area of $B((1-X_1,0),1)$ to the right of the line $x=1$, which is
\begin{eqnarray*}
 \int_{0}^{1} \left(\int_{x}^{1} 2\sqrt{1-t^2}\, dt\right) dx
&=&
\int_{0}^{1} \left(\int_{0}^{t} 2\sqrt{1-t^2}\, dx\right) dt\\
&=&
\int_{0}^{1}  2t \sqrt{1-t^2}\, dt\\
&=&
[- \tfrac23 (1-t^2)^{3/2}]_{0}^{1}\;\; = \;\; \tfrac23.
\end{eqnarray*}

Now let $X_1,X_2,Y_1,Y_2$ be independent random variables, each uniformly distributed on $[0,1]$.
Let $X = X_1+X_2$ and $Y=Y_1+Y_2$.
Then $\pr(X \leq t) = \tfrac12 t^2$ for $0 \leq t \leq 1$, 
and therefore we may take the density function $f_0$ of $X$ to satisfy $f_0(t) = t$ for $0 \leq t \leq 1$. 
Observe that $Z_1=(1-X_1,1-Y_1)$ is uniformly distributed over $[0,1]^2$, $Z_2=(1+X_2,1+Y_2)$ is uniformly distributed over $[1,2]^2$, and $d_E(Z_1,Z_2)^2 = X^2 + Y^2$.
Then
\begin{eqnarray*}
\E[g(Z)] &=&
\pr(X^2 + Y^2 \leq 1)
\\
&=&
\int_{0}^{1} \pr(Y \leq \sqrt{1-x^2}) f_0(x)\, dx\\
&=&
\int_{0}^{1} \tfrac12 (1-x^2) \, x\, dx \\
&=&
\tfrac12 [x^2/2 - x^4/4]_{0}^{1} \;\; = \;\; \tfrac18,
\end{eqnarray*}
as required. 
\end{proof}

\begin{proof}[Proof of Proposition~\ref{prop.M}]
For $u \neq v$ in $[n]$, let $Y(u,v)$ be 1 if $u$ and $v$ are adjacent in $G$, and be 0 otherwise.  Let $p = \E[Y(u,v)]$. Then $M =  \sum_{u<v} Y(u,v)$, and $\E[M] = \binom{n}{2} p$.  

Now let $Z$ be uniformly distributed over the square ${\mathcal S}_n = [-\frac12 \sqrt{n}, \frac12 \sqrt{n}]^2$.
Observe that $Z$ is at distance at most $r$ from a given side of the square ${\mathcal S}_n$ with probability $r/\sqrt{n}$, and $Z$ is in the $r \times r$ subsquare touching a given corner point of ${\mathcal S}_n$ with probability $r^2/n$. Thus, by Lemma~\ref{lem.outside},
the expected area of $B(Z,r) \cap {\mathcal S}_n$ is
\[ \pi r^2 - 4 (r/\sqrt{n})\, \alpha r^2 + 4 (r^2/n) \beta r^2 = \pi r^2 - \tfrac83 (r/\sqrt{n}) r^2 + \tfrac12 (r^2/n) r^2.\]
But this equals $np$, so
\[ \E[M] = \tfrac12 (n\!-\!1) np = \tfrac12 (n\!-\!1) \pi r^2 \big(1- \tfrac8{3\pi} (r/\sqrt{n}) + \tfrac{1}{2 \pi} (r^2/n) \big),\]
 as required in~(\ref{eqn.EM}).

Now consider the variance of $M$.
We may expand $\E[M^2]$ as
\begin{equation} \label{eqn.expand}
\E[M^2] = \sum_{\{u,v\}} \sum_{\{u',v'\}} \E[Y(u,v) Y(u',v')]
\end{equation}
where $\{u,v\}$ and $\{u',v'\}$ are both unordered pairs of distinct vertices.  If the pairs $\{u,v\}$ and $\{u',v'\}$ are identical, then $\E[Y(u,v) Y(u',v')]=\E[Y(u,v)]=p$. If the vertices $u,v,u',v' \in [n]$ are all distinct then $\E[Y(u,v) Y(u',v')]=p^2$. Now suppose that $u,v,w \in [n]$ are distinct.  Then
\[ \E[Y(u,v) Y(u,w)] = \E[Y(u,v)]\, \E[Y(u,w) | Y(u,v)=1] \leq p \cdot \pi r^2/n.\]
Call a point in the square ${\mathcal S}_n$ \emph{central} if it is not within distance $r$ of a side of the square.  Then 
\[ p > \E[Y(u,v) | \X_u \mbox{ central}] \pr{[\X_u \mbox{ central}]} >  (\pi r^2/n) (1-4r/\sqrt{n}) = \pi r^2/n - 4 \pi (r/\sqrt{n})^3. \]
Thus
\[ \E[Y(u,v) Y(u,w)] <  p \, ( p + 4 \pi  (r/\sqrt{n})^3) \leq p^2 + 4 \pi^2 (r/\sqrt{n})^5 \]
(using $p \leq \pi r^2/n$).  But there are at most $n^3$ such terms in the sum in~(\ref{eqn.expand}), so
\[ \E[M^2]  <  \E[M] + (\E[M])^2 + n^3 \cdot 4 \pi^2 (r/\sqrt{n})^5. \]
Hence, since $\E[M] < \frac12 \pi nr^2$,
\[ \var(M) < \tfrac12\pi nr^2 + 4 \pi^2 \sqrt{n} r^5, \]
as required in~(\ref{eqn.varM}).
\end{proof}

If $r \gg n^{1/6}$ then both $n^{1/2}r$ and $n^{1/4}r^{5/2}$ are $\ll (r^2/n) nr^2 = r^4$, and so by Chebyshev's inequality
\[ M=\tfrac12 n \pi r^2 \big(1- \tfrac8{3\pi} (r/\sqrt{n}) + \tfrac{1+o(1)}{2 \pi} (r^2/n) \big) \mbox{ whp}.\]
If $nr^2 \to \infty$ and $r \ll \sqrt{n}$ then similarly
\[ M=\tfrac{1+o(1)}2 n \pi r^2 \mbox{ whp}.\]

\subsection{The estimator $\hat{r}\, $: proof of Proposition~\ref{prop.rhat}}


Let $\mu(n,r)$ denote $\E[M]$, given in equation~(\ref{eqn.EM}), and note that $\mu(n,r)$ is increasing in $r$. 
We can determine $M$ from the adjacency matrix in $O(n^2)$ time; and then, by repeated bisection (for example) we can find $\hat{r}$ such that $| \mu(n, \hat{r}) - M| < 1$, in $O(\log n)$ arithmetic operations.  Thus we can calculate the estimator $\hat{r}$ in $O(n^2)$ time.

Let $\sigma=\sigma(n,r)$ denote the standard deviation of $M$, and let $\beta=\beta(n,r) = \sqrt{n} r + n^{1/4} r^{5/2}$.
By~(\ref{eqn.varM}) we have
$ \sigma < \sqrt{\pi/2} \sqrt{n} r + 2 \pi n^{1/4} r^{5/2} < 7 \beta$.
Let $\omega=\omega(n) \to \infty$ as $n \to \infty$, and satisfy $\omega^2 \ll \min \{r,1\} \sqrt{n}$. Let $\omega'=\omega'(n) \to \infty$ as $n \to \infty$, and satisfy $\omega' \ll \omega$.
By Chebyshev's inequality, 
whp $|M-\mu(n,r)| \ll \omega' \sigma$,
and so whp 
$|\mu(n,r) - \mu(n,\hat{r})| < \omega' \sigma +1$.  It follows that
\begin{equation} \label{eqn.mudiff}
|\mu(n,\hat{r}) - \mu(n,r)| \ll \omega \, \beta \;\; \mbox{ whp}.
\end{equation}


We want to bound $x:= | \hat{r}-r|$.  
Let $f(y) = |\mu(n,r+y) - \mu(n,r)|$, and observe that if $y'>y \geq 0$ or $y'<y \leq 0$ then $f(y')>f(y)$. By~(\ref{eqn.EM}), for $y$ such that $|y| = o(r)$ we have  $\mu(n,r+y) = \mu(n,r)(1+ 2y/r +o(|y|/r))$.
Thus, for such a $y$, by~(\ref{eqn.EM2})
\begin{equation} \label{eqn.fy}
f(y) = (2+o(1)) \mu(n,r) |y|/r = (1+o(1)) \pi r n |y| .
\end{equation}

We make two observations. 
(a) For $r \le n^{1/6}$, we have 
$\sqrt{n} r \geq n^{1/4} r^{5/2}$ so $\beta \leq 2 \sqrt{n} r$; and $r^2 n /\omega \gg \omega \sqrt{n} r$ (since $\omega^2 \ll \sqrt{n} r$)), so $r^2 n /\omega \gg \omega \beta$.
(b) For $r \ge n^{1/6}$, we have 
$\sqrt{n} r \leq n^{1/4} r^{5/2}$ so $\beta \leq 2 n^{1/4} r^{5/2}$; and $r^2 n /\omega \gg \omega n^{1/4} r^{5/2}$ (since $\omega^2 \ll \sqrt{n} \ll n^{3/4} r^{-1/2}$), so $r^2 n /\omega \gg \omega \beta$.
This in both cases $r^2 n /\omega \gg \omega \beta$.
Hence, by~(\ref{eqn.fy}), setting $y = \pm r/\omega$,
 we have 
\[ f(y) = (1+o(1)) \pi r^2 n/ \omega \gg
\omega \beta.\]
But now, by~(\ref{eqn.mudiff}) and the observed monotonicity of $f$, whp $|x| < |y|  \ll r$.
It follows that whp $f(x) = (1+o(1)) \pi r n |x| $, so $|x|= (1+o(1)) f(x)/(\pi r n)$; and thus by~(\ref{eqn.mudiff}), whp
\[ |x| \ll \omega  \beta /(\pi r n) \leq \omega \cdot (n^{-1/2} + n^{-3/4} r^{3/2}). \]
Hence
\[ | \hat{r}-r| = |x| \ll \omega \cdot (n^{-1/2} + \rho^{-3/2}) \;\; \mbox{ whp}, \]
which completes the proof of Proposition~\ref{prop.rhat}.

\subsection{The estimator $\hat{r}$ and Euclidean distance}

In this subsection we restrict $r$ to be large enough so that we can use Corollary~\ref{cor.dists2}.

\begin{lemma} \label{lem.dE2}
Let $r=r(n)$ satisfy $\, n^{3/14} \ll r \ll \sqrt{n}$.    
Then there exists $\delta=\delta(n) \to 0$ as $n \to \infty$
such that whp,  for all pairs $u,v$ of vertices, 
\begin{equation} \label{eqn.dEbounds1}
\hat{r} d_G(u,v) + \delta \hat{r} \geq d_E(\X_u,\X_v) \geq \hat{r} d_G(u,v) - (1+ \delta) \hat{r}.
\end{equation}
\end{lemma}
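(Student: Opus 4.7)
The plan is to establish the two directions separately, using the trivial one-edge-has-length-at-most-$r$ bound for the upper inequality on $d_E$, and Corollary~\ref{cor.dists2} for the lower inequality on $d_E$. Concretely, for all pairs $u,v$ we always have $d_E(\X_u,\X_v) \le r\, d_G(u,v)$, and by Corollary~\ref{cor.dists2} (valid since $r \gg n^{3/14}$) there is some $\eps_0=\eps_0(n)=o(1)$ such that whp, for every pair $u,v$,
\[ r\, d_G(u,v) \;\le\; d_E(\X_u,\X_v) + (1+\eps_0)\, r. \]
Rewriting both inequalities with $\hat r$ instead of $r$ gives
\[ d_E(\X_u,\X_v) \;\le\; \hat r\, d_G(u,v) + (r-\hat r)\, d_G(u,v) \]
and
\[ d_E(\X_u,\X_v) \;\ge\; \hat r\, d_G(u,v) + (r-\hat r)\, d_G(u,v) - (1+\eps_0)\, r. \]
So the whole task reduces to showing that $|r-\hat r|\cdot d_G(u,v) = o(\hat r)$ uniformly over pairs, and that $(1+\eps_0)\,r = (1+o(1))\hat r$.

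For the second point, Proposition~\ref{prop.rhat} gives $\hat r/r\to 1$ in probability, so $(1+\eps_0)r = (1+o(1))\hat r$ whp.  For the first point, I would use Corollary~\ref{cor.dists2} together with the trivial bound $d_E(\X_u,\X_v)\le \sqrt{2n}$ to get the diameter bound $d_G(u,v) \le \sqrt{2n}/r + 2$ whp. I then need $|r-\hat r|\cdot (\sqrt{2n}/r+2) = o(r)$. Choose a function $\omega=\omega(n)\to\infty$ tending to infinity sufficiently slowly (see below) and apply Proposition~\ref{prop.rhat} to obtain
\[ |r-\hat r| \;\ll\; \omega \,(n^{-1/2}+\rho^{-3/2}) \;=\; \omega n^{-1/2} + \omega\, r^{3/2} n^{-3/4} \quad \text{whp}, \]
where $\rho=\sqrt{n}/r$. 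The $|r-\hat r|\cdot 2$ contribution is trivially $o(r)$, so the real requirement is
\[ \big(\omega n^{-1/2} + \omega\, r^{3/2} n^{-3/4}\big) \cdot \sqrt{2n}/r \;=\; o(r), \]
i.e.\ $\omega \ll r^2$ and $\omega \ll r^{1/2}\, n^{1/4}$.

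The hypothesis $r \gg n^{3/14}$ enters precisely here, and is what makes this step work. Indeed, $r \gg n^{3/14}$ gives $r^2 \gg n^{3/7}\to\infty$, so the first requirement $\omega \ll r^2$ can be met by any sufficiently slowly growing $\omega$; and $r^{1/2} n^{1/4} \gg n^{3/28} \cdot n^{7/28} = n^{5/14} \to \infty$, so the second requirement $\omega \ll r^{1/2} n^{1/4}$ can be met as well. (The exponent $3/14$ is exactly the critical value making $r^{1/2} n^{1/4}\to\infty$, so any choice of $\omega$ tending to infinity slowly enough works.) Putting the three ingredients together---the Corollary~\ref{cor.dists2} sandwich, the Proposition~\ref{prop.rhat} estimate of $\hat r$, and the whp diameter bound---yields a single function $\delta=\delta(n)=o(1)$, uniform over all pairs $(u,v)$, satisfying~(\ref{eqn.dEbounds1}) whp. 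The main bookkeeping obstacle is simply to track the different error sources and verify that $r \gg n^{3/14}$ is what makes $|r-\hat r|\cdot\mathrm{diam}(G)=o(\hat r)$; no new probabilistic argument is needed beyond the cited results.
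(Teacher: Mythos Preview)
Your proof is correct and follows essentially the same route as the paper: combine the deterministic bound $d_E \le r\,d_G$ with Corollary~\ref{cor.dists2}, replace $r$ by $\hat r$ using the whp diameter bound $d_G \le \sqrt{2n}/r + 2$ together with Proposition~\ref{prop.rhat}, and absorb all error terms into a single $\delta=o(1)$.

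One small inaccuracy in your commentary: the hypothesis $r \gg n^{3/14}$ does \emph{not} ``enter precisely'' at the step where you bound $|r-\hat r|\cdot d_G$; both conditions $\omega \ll r^2$ and $\omega \ll r^{1/2} n^{1/4}$ are satisfied for any $r$ with (say) $1 \ll r \ll \sqrt{n}$, so $3/14$ is nowhere near critical there.  The threshold $n^{3/14}$ is needed solely to invoke Corollary~\ref{cor.dists2} in the first place. (The paper, incidentally, keeps only the dominant term $\rho^{-3/2}$ from Proposition~\ref{prop.rhat} and obtains the slightly sharper $2\rho\,|\hat r - r| = o(1)$ rather than $o(r)$, but your weaker bound is already enough for the lemma as stated.)
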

\noindent
Observe that by~(\ref{eqn.dEbounds1}) and Proposition~\ref{prop.rhat} we can estimate each value $d_E(\X_u,\X_v)$ up to an additive error of
$(\frac12 + o(1)) \hat{r} = (\frac12+o(1))r$ whp.
\begin{proof}
By Corollary~\ref{cor.dists2}, there exists $\delta=\delta(n) \to 0$ sufficiently slowly that whp,
for all pairs vertices $u,v$ we have
\begin{equation} \label{eqn.dEbounds2}
r d_G(u,v) \geq d_E(\X_u,\X_v) \geq r d_G(u,v) - (1+\delta/2)r.
\end{equation}
%
%
Let $\rho=\sqrt{n}/r$, as in Proposition~\ref{prop.rhat}. By Corollary~\ref{cor.dists2}, whp 
$$
\max_{u,v} d_G(u,v) \leq \sqrt{2n}/r+2 \le 2 \rho 
$$
for $n$ sufficiently large (where the maximum is over all pairs $u,v$ of vertices). Hence, by~(\ref{eqn.dEbounds2}), 
\begin{eqnarray} \label{eqn.dEbounds3}
\hat{r} d_G(u,v) + 2 \rho\, |\hat{r} -r|
&\geq & d_E(\X_u,\X_v) \nonumber \\
& \geq &
\hat{r} d_G(u,v) - 2 \rho\, |\hat{r} -r| - (1+ \delta/2) r.
\end{eqnarray}
By Proposition~\ref{prop.rhat}, whp $|\hat{r} -r| \leq \omega \rho^{-3/2}$.
Thus whp
$$
2\rho |\hat{r} -r| =o(1),
$$
by choosing $\omega$ to be growing sufficiently slowly (which we may, since $\rho \to \infty$). 
We may assume that $\delta \to 0$ sufficiently slowly, so that $\delta r \to \infty$ as $n \to \infty$, and in particular, whp $2\rho |\hat{r} -r| =o(\delta r)$, and thus, since by Proposition~\ref{prop.rhat} we have $\hat{r}/r \to 1$ whp, we have 
$2\rho |\hat{r} -r| \le \delta \hat{r}$ whp.
Also, by the same Proposition and again choosing $\delta$ tending to $0$ with $n$ sufficiently slowly, whp
$$
(1+ \delta/2) r = (1+\delta/2) (1+o(1))\hat{r} \le (1+\delta)\hat{r}.
$$
Putting these bounds into~(\ref{eqn.dEbounds3}) completes the proof of the lemma. 
%
%
\end{proof}

\remove{In this section, we use the preliminary results from the last section to see how to estimate the threshold distance $r$, and Euclidean distances between points, sufficiently accurately to be able to prove Theorem~\ref{thm:main} in the next section. Given a vertex $v$ and a set $W$ of vertices with $v \not\in W$, let $e(v,W)$ denote the number of edges between $v$ and $W$.
\begin{lemma} \label{lem.rhat}
Let  $G\in\RG$, with $r=r(n) \to \infty$ as $n \to \infty$ and $r \ll \sqrt{n}$.  Let $\rho=\sqrt{n}/r$, so $\rho \to \infty$ as $n \to \infty$.
Fix a small rational constant $\eps>0$, say $\eps=0.01$, and let $\omega_0(x)=x^{\eps}$ for $x>0$.

Let $f(x)= \lceil x/\omega_0(x) \rceil = \lceil x^{1-\eps} \rceil$ for $x>0$.  
Let $Y_1 = \deg(v_1) +1$ (so $Y_1 \neq 0$), 
let $K= f(\sqrt{\pi n/Y_1})$, and 
let $Y = \sum_{i=2}^{K+1} e(v_i,V \backslash \{v_1\})$.
Finally, let 
\[ \hat{r} = \big(\frac{Y}{\pi K (1-(K\!+\!1)/n)}\big)^{1/2}.\]
Then 
\begin{equation} \label{eqn.rhat}
|\hat{r}-r| < \omega_0(\rho) \, \rho^{-1/2}= o(1) \; \mbox{ whp};
\end{equation}
and in particular $\hat{r}/r \to 1$ in probability as $n \to \infty$.
\end{lemma}
The same conclusion holds, with essentially the same proof, if we redefine $K$ as the output of some polynomial time algorithm which returns either $\lfloor x/\omega_0(x) \rfloor$ or $\lceil x/\omega_0(x) \rceil$ where $x= \sqrt{\pi n/Y_1}$; and we may see that in polynomial time we can compute an estimate $\hat{\hat{r}}$ very close to $\hat{r}$, so that the bound~(\ref{eqn.rhat}) holds for~$\hat{\hat{r}}$. 

\begin{proof}
Let $\mathcal{A}_0(j)$ be the event that $\X_j$ is not within distance $r$ of the boundary of $\SR$. (We suppress the dependence on $n$ here, as we often do.) Then
\[ \pr(\overline{\mathcal{A}_0(j)}) \leq 4r \sqrt{n}/n = 4 / \rho = o(1).\]
(Here and in the following $\overline{\mathcal{A}}$ denotes the complement of the event $\mathcal{A}$.)
By Chebyshev's inequality, if $Z \sim \Bin(n-1,\pi r^2/n)$ then $Z \sim \pi r^2$ whp. 
Thus, since $\mathcal{A}_0(1)$ holds whp, we have
$Y_1 \sim \pi r^2$ whp, so $\sqrt{\pi n/Y_1} \sim \sqrt{n}/r= \rho$ whp, and thus $K \sim f(\rho)$ whp.
In particular,
\[\tfrac12 f(\rho) \leq K \leq 2 f(\rho) \; \mbox{ whp}. \]
Observe that, as $n \to \infty$,
$ f(\rho) \to \infty$ and $f(\rho) \ll \rho$.
Let $k$ satisfy 
$\tfrac12 f(\rho) \leq k \leq 2 f(\rho)$, and condition on $K=k$.  It suffices to show now that~(\ref{eqn.rhat}) holds.

Consider the vertices $v_2,\ldots,v_{k+1}$. Let $\mathcal{A}_1 = \land_{j=2}^{k+1} \mathcal{A}_0(j)$, the event that no corresponding point $\X_j$ is within distance $r$ of the boundary of $\SR$.  The probability that $\mathcal{A}_1$ fails is at most $4k /\rho \leq 8 f(\rho)/\rho =o(1)$.  
Let $\mathcal{A}_2$ be the event that the corresponding balls $B(\X_j,r)$ are pairwise disjoint.  As the centres must be $2r$ apart, the probability that $\mathcal{A}_2$ fails is at most 
\[ \binom{k}{2} \, \pi (2r)^2/n \leq 2 \pi  (k r/\sqrt{n})^2 \leq 2 \pi (2 f(\rho)/\rho)^2 = o(1).\]
Thus $\mathcal{A}_1 \land \mathcal{A}_2$ holds whp. 

Condition on the event $\mathcal{A}_1 \land \mathcal{A}_2$ occurring (still with $K=k$). Then $Y$ has distribution $\Bin(n\!-\!(k\!+\!1), k \pi r^2/n)$, with mean $(1-(k\!+\!1)/n)k \pi r^2$ and variance at most $(1-(k\!+\!1)/n) k \pi r^2$.  
Thus $\hat{r}^2$ 
has mean $r^2$ and variance at most $r^2 /(\pi k(1-(k\!+\!1)/n)) = O(r^2/k)$.  It follows by Chebyshev's inequality (recalling that, as $n \to \infty$, $\rho \to \infty$ and so also $\omega_0(\rho)^{1/3} \to \infty$), that whp $|\hat{r}^2- r^2| \leq \omega_0(\rho)^{1/3} \, (r/ \sqrt{k})$.
Hence, without conditioning on $\mathcal{A}_1 \land \mathcal{A}_2$, we have $|\hat{r}^2-r^2| \leq \omega_0(\rho)^{1/3} \, (r/ \sqrt{k})$ whp.
But
\[|\hat{r}^2-r^2| = |\hat{r}-r| \, (\hat{r} + r) \geq |\hat{r}-r|\, r.\]
Hence  $|\hat{r}-r| \leq \omega_0 (\rho)^{1/3}  / \sqrt{k}$ whp.
But 
\[ \omega_0(\rho)^{1/3} \, / \sqrt{k} \leq \sqrt{2} \, \omega_0 (\rho)^{1/3} \, (f(\rho))^{-1/2}
\leq (\sqrt{2} +o(1)) \, \omega_0 (\rho)^{1/3} \, \sqrt{\omega_0 (\rho)/\rho} \, ; \]
and so
\[ \omega_0 (\rho)^{1/3} / \sqrt{k} = O( \omega_0 (\rho)^{5/6}) \rho^{-1/2} \ll \omega_0 (\rho) \rho^{-1/2},\]
which completes the proof.
\end{proof}

}



\section{Proof of Theorem~\ref{thm:main}}
\label{sec:Main}
In this section, we prove Theorem~\ref{thm:main}, on the reconstruction of random geometric graphs.

Throughout this section, let $\omega=\omega(n)$ be any function tending to infinity slowly as $n \to \infty$, and in particular such that $\omega \ll \sqrt{\log n}$.
We shall assume at various places without further comment that $n$ is sufficiently large.
Let $\mathcal{B}_1$ be the event that we find vertices $v_1,\ldots,v_4$ such that
$d_E(\X_{v_i},\pi(c_i)) < \omega$ for each $i=1,\ldots,4$, for some (unknown) random symmetry $\pi=\pi(\Psi)$ of~$\SR$.
By Lemma~\ref{lem:corner}, $\mathcal{B}_1$ holds whp.  Let $\sigma_0$ denote the identity symmetry. 
If $\mathcal{B}_1$ does not hold then let us set $\pi=\sigma_0$ (the choice of $\pi$ as $\sigma_0$ will not be important).
Now let $\sigma$ be any given symmetry.  Observe that $\mathcal{B}_1$ holds for $\Psi$ if and only if it holds for $\sigma^{-1} \circ \Psi$;
on $\mathcal{B}_1$, $\pi(\Psi)=\sigma$ if and only if $\pi(\sigma^{-1} \circ \Psi)=\sigma_0$, and $\Psi$ and $\sigma^{-1} \circ \Psi$ have the same distribution.  Thus for each symmetry $\sigma$
\begin{equation} \label{eqn.pi}
 \pr(\mathcal{B}_1 \land (\pi =\sigma)) =  \pr(\mathcal{B}_1 \land (\pi =\sigma_0)),
\end{equation}
and so $\pr (\pi = \sigma) \to \tfrac18$ as $n \to \infty$.
Since we are using the symmetry-adjusted measure $d^*$, we may treat the random symmetry $\pi$ as if it were the identity, as we shall check below.
We set $\Phi(v_i)=c_i$, and still have to assign $\Phi(v)$ for all other vertices $v \in V(G)$. 
Let $\mathcal{B}_2$ be the event that $\mathcal{B}_1$ holds and $\pi$ is the identity.  Thus $\pr(\mathcal{B}_2) \to \tfrac18$ as $n \to \infty$.
Recall that we are given $\eps >0$: we may assume wlog that $\eps< \frac12$ say.
The main step in the proof will be to show that $\Phi$ can be defined for all vertices in such a way that
\begin{equation} \label{claimd}
\mbox{conditional on } \mathcal{B}_2, \mbox{ we have } \dmax(\Psi,\Phi) <(1+\eps)r \mbox{ whp}. 
\end{equation}
(Of course $d^*(\Psi,\Phi) \leq \dmax (\Psi,\Phi)$.)

Let us prove the claim~(\ref{claimd}).
By Proposition~\ref{prop.rhat}, whp $|\hat{r}-r|  \ll \omega \rho^{-3/2}$.  Hence, whp, for each pair $u,v$ of distinct vertices
\[ \hat{r}(d_G(u,v)+\eps/4) \le  r(d_G(u,v)+\eps/4)+\omega \rho^{-3/2}(2d_G(u,v)). \]
%
%
But by Corollary~\ref{cor.dists2}, whp, for each pair $u,v$ of vertices 
\[ rd_G(u,v) \leq d_E(\X_u,\X_v)+ 2r \leq \sqrt{2n}+ 2r \leq 2 \sqrt{n}, \]
 so
\[\omega \rho^{-3/2} d_G(u,v) =
\frac{\omega r^{1/2}\, rd_G(u,v)}{n^{3/4}} 
\leq \frac{2 \omega r^{1/2}}{n^{1/4}}
= 2 \omega \rho^{-1/2} = o(r);
\]
and thus
\[ \hat{r}(d_G(u,v)+\eps/4) \le  r(d_G(u,v)+\eps/3).\]

By the same argument we obtain whp, for each pair $u,v$ of vertices 
\begin{eqnarray*}
\hat{r}(d_G(u,v)-(1+ \eps/4))
 & \ge & r(d_G(u,v)-(1+\eps/4 ))-\omega \rho^{-3/2} d_G(u,v)\\
& \ge & r(d_G(u,v)-(1+\eps/3)).
\end{eqnarray*}
By Lemma~\ref{lem.dE2} with $\delta=\eps/5$, whp, for each pair $u,v$ of vertices 
\[ \hat{r} (d_G(u,v) + \eps/5) \geq d_E(\X_u,\X_v) \geq 
\hat{r} (d_G(u,v) - (1+ \eps/5)).\]
But $\omega + \hat{r} \eps/5 \leq \hat{r} \eps/4$ whp.
Hence, conditional on the event $\mathcal{B}_2$, whp, for each $i \in [4]$ and vertex $v \in V^- = V \setminus \{v_1,\ldots,v_4\}$, we have 
\begin{eqnarray*}
\label{eqn.d_Ebounds5}
&& r (d_G(v,v_i) + \eps/3) \geq \hat{r} (d_G(v,v_i) + \eps/4) \ge d_E(\X_v,\X_{v_i})+\omega\\
& \geq &
d_E(\X_v,c_i) \\
& \geq &
d_E(\X_v,\X_{v_i})-\omega \ge \hat{r} (d_G(v,v_i) - (1+ \eps/4)) \geq r (d_G(v,v_i) - (1+ \eps/3)).
\end{eqnarray*}
Let $\mathcal{B}_3$ be the event that these last inequalities hold, so $\mathcal{B}_3$ holds whp.

Condition on the events $\mathcal{B}_2$ and $\mathcal{B}_3$, 
and let $v_1, v_2, v_3, v_4$ be the `corner' vertices found.
We shall show that $\dmax(\Psi,\Phi)<(1+\eps)r$ 
 (deterministically): this will establish~(\ref{claimd}), since then
\[ \pr( \dmax(\Psi,\Phi) \geq (1+\eps)r \mid \mathcal{B}_2) \leq \pr(\overline{\mathcal{B}_3})/\pr(\mathcal{B}_2) = o(1).\]
By symmetry, we may assume for convenience that $v_i=i$ for each $i \in [4]$.

For each $i \in [4]$, let $Q(n,i)$ denote the quarter of $\SR$ containing the corner~$c_i$.
For each vertex $v \in V^-$ there is a `nearest corner' in terms of graph distance to the vertices $v_1,\ldots,v_4$. We need a lemma concerning such a nearest corner.

%
\begin{lemma} \label{lem.wasclaim}
Fix $j \in [4]$.  For each vertex $v \in V^-$ such that $d_G(v,v_j) =\min_{1 \le i \le 4} d_G(v,v_i)$, the corresponding point $\X_v$ lies within distance at most $r$ of the quarter $Q(n,j)$ of~$\SR$.
\end{lemma}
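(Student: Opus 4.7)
The plan is to argue by contradiction. Suppose that $v \in V^-$ is such that $j$ minimises $d_G(v,v_i)$ over $i \in [4]$, but $\X_v$ lies at Euclidean distance strictly greater than $r$ from $Q(n,j)$. I will exhibit some $i \ne j$ with $d_G(v,v_i) < d_G(v,v_j)$, contradicting the minimality. By the symmetry of $\SR$ it suffices to treat $j=1$, so let $s=\sqrt{n}/2$, $c_1=(-s,-s)$, $Q(n,1)=[-s,0]^2$, and write $\X_v=(x,y)$. The hypothesis becomes $\max(x,0)^2+\max(y,0)^2>r^2$.

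First I would extract from event $\mathcal{B}_3$ the ``$c_1$ is approximately closest'' statement. For any $k$ we have $d_E(\X_v,c_1)\le r(d_G(v,v_1)+\eps/3)$ and $d_E(\X_v,c_k)\ge r(d_G(v,v_k)-1-\eps/3)$; using $d_G(v,v_1)\le d_G(v,v_k)$ this gives
\[
 d_E(\X_v,c_1)-d_E(\X_v,c_k) \le (1+2\eps/3)\,r \qquad \text{for every } k\in[4].
\]
The contradiction will come from showing that, under the geometric hypothesis, there exists $i\in\{2,3,4\}$ with $\delta_i:=d_E(\X_v,c_1)-d_E(\X_v,c_i) > (1+2\eps/3)\,r$. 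Since $\eps<\tfrac12$ is assumed, it suffices to establish the cleaner bound $\delta_i\ge r\sqrt{2}\,(1-o(1))$ as $r/\sqrt{n}\to 0$, because $\sqrt{2}>4/3>1+2\eps/3$.

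The key geometric step uses the identity
\[
 \delta_i \;=\; \frac{d_E(\X_v,c_1)^2-d_E(\X_v,c_i)^2}{d_E(\X_v,c_1)+d_E(\X_v,c_i)},
\]
which has a simple affine numerator. I would split into three cases. In Case~1 ($x>r$, $y\le 0$), take $i=4$: the numerator is $4xs$, and the denominator is maximised over the region at $y=0$, where it equals $\sqrt{(x+s)^2+s^2}+\sqrt{(s-x)^2+s^2}=2\sqrt{2}\,s\,(1+O((x/s)^2))$, giving $\delta_4\ge x\sqrt{2}\,(1-o(1))\ge r\sqrt{2}\,(1-o(1))$. Case~2 ($x\le 0$, $y>r$) is symmetric, using $i=2$. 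In Case~3 ($x,y>0$ with $\sqrt{x^2+y^2}>r$), take $i=3$: the numerator is $4s(x+y)$, and $x+y\ge \sqrt{x^2+y^2}>r$ since $x,y\ge 0$; the denominator is again $2\sqrt{2}\,s\,(1+o(1))$, yielding $\delta_3\ge r\sqrt{2}\,(1-o(1))$. The three cases cover all configurations of $\X_v\in\SR$ satisfying the hypothesis.

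Combining the two displays gives $(1+2\eps/3)r < \delta_i \le (1+2\eps/3)r$, a contradiction for $n$ sufficiently large. The main obstacle is the geometric step: although conceptually straightforward (and transparent once one notices the identity above and the $r\sqrt{2}$ constant arising at $y=0$), bounding the denominator uniformly on each piece of the complement of the $r$-neighbourhood of $Q(n,1)$ requires some care, since the naive Cauchy--Schwarz bound $d_E(\X_v,c_1)+d_E(\X_v,c_i)\le\sqrt{3n}$ only delivers $\delta_i\ge 2r/\sqrt{3}$, which is not enough once $\eps$ is moderately large; one needs to exploit that the maximum of the denominator is actually $2\sqrt{2}\,s$ rather than $\sqrt{3n}=2\sqrt{3}\,s$.
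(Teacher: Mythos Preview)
Your overall strategy coincides with the paper's: assume $j$ minimises $d_G(v,v_i)$ but $\X_v$ lies farther than $r$ from $Q(n,j)$, and derive a contradiction by showing that for some other corner $c_i$ the gap $\delta_i=d_E(\X_v,c_j)-d_E(\X_v,c_i)$ is at least $(\sqrt{2}+o(1))r$, which exceeds the $(1+2\eps/3)r$ ceiling forced by $\mathcal{B}_3$. Your case split according to the signs of $(x,y)$ is equivalent to the paper's split by quarter, and your difference-of-squares identity is a clean way to read off the linear numerator $4xs$ or $4s(x+y)$.

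The gap is in the denominator estimate. You assert that on each region the denominator $d_E(\X_v,c_1)+d_E(\X_v,c_i)$ equals $2\sqrt{2}\,s\,(1+o(1))$ (implicitly treating the $O((x/s)^2)$ term as $o(1)$), and hence that $\delta_4\ge x\sqrt{2}\,(1-o(1))$. This is not uniformly true: in Case~1 at $(x,y)=(s,0)$ the denominator is $s(1+\sqrt{5})\approx 3.24\,s$, not $2\sqrt{2}\,s\approx 2.83\,s$, and indeed $\delta_4=(\sqrt{5}-1)s<s\sqrt{2}=x\sqrt{2}$ there. The same happens in Case~3 near $(s,0)$. So your closing remark that ``the maximum of the denominator is actually $2\sqrt{2}\,s$'' is incorrect, and the intermediate inequality $\delta_i\ge x\sqrt{2}(1-o(1))$ is false as stated.

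The conclusion $\delta_i\ge r\sqrt{2}(1-o(1))$ is nonetheless true, and there are two easy repairs. One is the paper's route: minimise $\delta_i$ directly by monotonicity. In Case~1, $\delta_4=d_E(\cdot,c_1)-d_E(\cdot,c_4)$ is increasing in $x$ (obvious) and, writing it as $\sqrt{A+u}-\sqrt{B+u}$ with $A>B$ and $u=(y+s)^2$, decreasing in $y$ on $y\ge -s$; so the minimum on $\{r<x\le s,\ -s\le y\le 0\}$ is at $(r,0)$, where a direct expansion gives $(\sqrt{2}+o(1))r$. The paper handles Case~3 by a short one-variable calculus argument on the boundary arc $x^2+y^2=r^2$, again locating the minimum at $(r,0)$. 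The other repair stays inside your framework: from $D(x,0)\le 2\sqrt{2}\,s(1+C(x/s)^2)$ (valid with some fixed $C<1$) you get $\delta_4\ge x\sqrt{2}/(1+C(x/s)^2)$; since the right-hand side is increasing in $x$ on $[0,s]$, it is at least its value at $x=r$, namely $r\sqrt{2}(1-o(1))$. Either way the missing step is a monotonicity argument, not merely a sharper constant for the denominator.
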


\begin{proof}[Proof of Lemma~\ref{lem.wasclaim}]
Suppose wlog that $j=4$.  Let $v \in V^-$, and suppose for a contradiction that $\X_v$ is not within distance $r$ of $Q(n,4)$.  Assume that $\X_v \in Q(n,1)$ (we shall consider other cases later).
Let us first check that the minimum value of $d_E(\x,c_4) - d_E(\x,c_1)$ over all points $\x$ in $Q(n,1)$ at distance $\geq r$ from $Q(n,4)$ is attained at $\x = \x^*$ where $\x^*=(-r,0)$. 
To see this, let us observe first that the minimum must be attained for some point $(-r, -\frac{\sqrt{n}}{2}+z)$
 with $z \in [0, \frac{\sqrt{n}}{2}]$, as otherwise one could obtain a smaller solution by shifting horizontally to the right until hitting the line $y=-r$. Next, for a given point $\x=(-r, -\frac{\sqrt{n}}{2}+z)$ we have 
$$
d_E(\x,c_4) - d_E(\x,c_1)=\sqrt{z^2+(\tfrac{\sqrt{n}}{2}+r)^2} - \sqrt{z^2+(\tfrac{\sqrt{n}}{2}-r)^2}.
$$
The derivative with respect to $z$ of the previous expression is 
$$
\frac{z}{\sqrt{z^2+(\frac{\sqrt{n}}{2}+r)^2}} - \frac{z}{\sqrt{z^2+(\frac{\sqrt{n}}{2}-r)^2} },
$$
which is clearly negative, since the denominator in the first term is bigger than in the second one. Hence, $d_E(\x,c_4) - d_E(\x,c_1)$ is decreasing in $z$,
and so it is minimised at $\x=\x^*$, as we wished to show.
Hence, all points $\x$ in $Q(n,1)$ at distance $\geq r$ from $Q(n,4)$ satisfy
\begin{eqnarray}\label{distc4c1}
d_E(\x,c_4) - d_E(\x,c_1) & \geq & d_E(\x^*,c_4) - d_E(\x^*,c_1)\nonumber \\ \nonumber
&=&
\sqrt{(\tfrac{\sqrt{n}}{2})^2+(\tfrac{\sqrt{n}}{2}+r)^2} - \sqrt{(\tfrac{\sqrt{n}}{2})^2+(\tfrac{\sqrt{n}}{2}-r)^2} \\
&=& \sqrt{n/2 + \sqrt{n}r(1+o(1))} - \sqrt{n/2 - \sqrt{n}r(1+o(1))} \nonumber \\
&=& \sqrt{\tfrac{n}{2}}\left(1+\tfrac{r}{\sqrt{n}}(1+o(1))\right)-\sqrt{\tfrac{n}{2}}\left(1-\tfrac{r}{\sqrt{n}}(1+o(1))\right)  \nonumber \\
&=&(\sqrt{2}+o(1))r. 
\end{eqnarray}
But, since $\mathcal{B}_3$ holds,
\[ d_G(v,v_1) \leq d_E(\X_v, c_1)/r + 1+ \eps/3\]
and
\[d_G(v,v_4) \geq d_E(\X_v, c_4)/r - \eps/3.\]
Hence, by~(\ref{distc4c1})
\begin{eqnarray*}
  d_G(v,v_4)
& \geq &
  d_E(\X_v, c_4)/r - \eps/3\\
& \geq &  
  d_E(\X_v, c_1)/r + \sqrt{2} +o(1) -\eps/3\\
& \geq &
  d_G(v,v_1) + \sqrt{2} -1 +o(1) -2 \eps/3 \;\; > \;\; d_G(v,v_1)
\end{eqnarray*}
(for $n$ sufficiently large),
a contradiction.  Thus we cannot have $\X_v \in Q(n,1)$.

The case when $\X_v$ is in $Q(n,3)$ is analogous. Finally consider the case when $\X_v$ is in $Q(n,2)$.
Now we shall check that the minimum value of $d_E(\x,c_4) - d_E(\x,c_2)$ over all points $\x$ in $Q(n,2)$ at distance $\geq r$ from $Q(n,4)$ is again  attained at $\x = \x^* =(-r,0)$ (or at $\x=(0,r)$).
To see this, observe much as before that the minimum must be attained at distance exactly $r$ from $(0,0)$, as otherwise one could obtain a smaller solution by shifting $\x$ along the straight line connecting $\x$ with $(0,0)$, until the distance from $(0,0)$ is exactly $r$. Next, for a given point $\x=(- r \cos \theta, r \sin \theta)$
 with $\theta \in [0, \pi/2]$, $d_E(\x,c_4) - d_E(\x,c_2)$ is equal to
$$
\sqrt{(\frac{\sqrt{n}}{2}+r \cos \theta)^2+(\frac{\sqrt{n}}{2}+r \sin \theta)^2} - \sqrt{(\frac{\sqrt{n}}{2}-r \cos \theta)^2+(\frac{\sqrt{n}}{2}-r \sin \theta)^2}.
$$
The derivative with respect to $\theta$ of the above expression is
$$
\frac{r\sqrt{n}(\cos \theta - \sin \theta) }{2\sqrt{(\frac{\sqrt{n}}{2}+r \cos \theta)^2+(\frac{\sqrt{n}}{2}+r \sin \theta)^2}} - \frac{r\sqrt{n}(\sin \theta - \cos \theta)}{ 2\sqrt{(\frac{\sqrt{n}}{2}-r \cos \theta)^2+(\frac{\sqrt{n}}{2}-r \sin \theta)^2}}.
$$
For $\theta \in [0, \pi/4]$, $\cos \theta \ge \sin \theta$, the derivative is positive (or zero), whereas for $\theta \in [\pi/4, \pi/2]$ the derivative is negative (or zero). Hence, the minimum value of $d_E(\x,c_4) - d_E(\x,c_2)$ is attained at $\x=\x^*=(-r,0)$ (or at $\x=(0,r)$),  as we wished to show.

Therefore, by~(\ref{distc4c1}), all points $\x$ in $Q(n,2)$ at distance $\geq r$ from $Q(n,4)$ satisfy
\begin{eqnarray*} 
d_E(\x,c_4) - d_E(\x,c_2) & \geq & d_E(\x^*,c_4) - d_E(\x^*,c_2)\\
&=& (\sqrt{2}+o(1))r. 
\end{eqnarray*}
The remainder of the argument is as before, and so we have completed the proof of the lemma. 
\end{proof}
We now resume the proof of Theorem~\ref{thm:main} (still assuming that the events $\mathcal{B}_2$ and $\mathcal{B}_3$ hold). Let $v \in V^-$ be such that $d_G(v,v_j)=\min_{1 \le i \le 4} d_G(v,v_i)$, as in the last lemma, and assume 
wlog that $j=4$.
Then by the last lemma, $\min_{i \in [3]} d_E(\X_v,c_i) \geq \sqrt{n}/2 - r$.
 Denote by $\alpha$ the angle $c_2 \X_v c_3$ at $\X_v$ between the segments $\X_v c_2$ and $\X_v c_3$, and by $\beta$ the 
angle $c_1 \X_v c_2$ at $\X_v$ between the segments  $\X_v c_1$ and $\X_v c_2$.  We do not observe $\alpha$ or $\beta$ directly,
 but clearly we have $\pi/4 \leq \alpha, \beta \leq \pi/2+o(1)$ (see Figure~\ref{fig2}, left picture), the bounds being attained if $\X_v$ is $c_4$ 
 and if $\X_v$ is near $(0,0)$, respectively.
\begin{figure}[h]
\scalebox{1.27}{
\begin{tikzpicture}
\coordinate (A) at (0,0);
\coordinate (B) at (4,0);
 \coordinate (C) at (4,4);
 \coordinate (D) at (0,4);
\coordinate (E) at (2, 2);
\coordinate (F) at (2, 0);
\coordinate (G) at (4, 2);
 \coordinate (Aa) at (0.1,0.25);
\coordinate (Bb) at (3.8,0.1);
\coordinate (Cc) at (3.7,3.8);
\coordinate (Dd) at  (0.1,3.7);
\coordinate (H) at (3, 1.5);
\fill[green]  (Aa) circle [radius=0.05];
\fill[green] (Bb) circle [radius=0.05];
\fill[green] (Cc) circle [radius=0.05];
\fill[green] (Dd) circle [radius=0.05];
\fill[red]  (A) circle [radius=0.08];
\fill[red] (B) circle [radius=0.08];
\fill[red] (C) circle [radius=0.08];
\fill[red] (D) circle [radius=0.08];
\fill[red] (H) circle [radius=0.1];
\draw [thick] (A)--(B)--(C)--(D)--(A);
\draw [green,dashed] (F)--(E)--(G);
\draw [blue,dashed,thick] (A)--(H)--(D);
\draw [blue,dashed,thick] (H)--(C);
\draw [dashed] (2.58,1.8) to [out=65,in=135] (3.2,2); 
\draw [dashed] (2.2,2) to [out=220,in=130] (2.2,1.1); 
\node [right] at (1.5,1.6) {\footnotesize{$\beta$}};
\node [above] at (2.85,2.1) {\footnotesize{$\alpha$}};
\node[right,red] at (H) {\footnotesize{$\X_v$}};
\node[right,red] at (B) {\footnotesize{$c_4$}};
\node[left,red] at (A) {\footnotesize{$c_1$}};
\node[right,red] at (C) {\footnotesize{$c_3$}};
\node[left,red] at (D) {\footnotesize{$c_2$}};
\end{tikzpicture}}
\hspace{0.5cm}
\scalebox{1.27}{%
\begin{tikzpicture}
\coordinate (A) at (0,0);
\coordinate (B) at (4,0);
 \coordinate (C) at (4,4);
 \coordinate (D) at (0,4);
\coordinate (E) at (2, 2);
\coordinate (F) at (2, 0);
\coordinate (G) at (4, 2);
 \coordinate (Aa) at (0.1,0.25);
\coordinate (Bb) at (3.8,0.1);
\coordinate (Cc) at (3.7,3.8);
\coordinate (Dd) at  (0.1,3.7);
\coordinate (F1) at (3.15,2.4);
\coordinate (F2) at (3.2,2.23);
\coordinate (F3) at (3.8,1.85);
\coordinate (F4) at (3.95,1.95);
\coordinate (F5) at (3.7,1.3);
\coordinate (F6) at (3.7,1);
\coordinate (F7) at (2.95,0.65);
\coordinate (F8) at (2.7,0.62);
\coordinate (F9) at (2.8,0.85);
\coordinate (F10) at (2,0.99);
\coordinate (F11) at (2.2,1.91);
\coordinate (F12) at (2.58,2.32);
\coordinate (F13) at (2,1.20);
\coordinate (F14) at (3.26,0.75);
\coordinate (F15) at (3.95,1.8);
\coordinate (R1) at (2.96,2.17);
\coordinate (R2) at (3.86,1.85);
\coordinate (R4) at (3.07,0.82);
\coordinate (R6) at (2.125,1.148);
\fill[black]  (R1) circle [radius=0.04];
\fill[black]  (R2) circle [radius=0.04];
\fill[black]  (R4) circle [radius=0.04];
\fill[black]  (R6) circle [radius=0.04];
\coordinate (H) at (3, 1.5);
\fill[green]  (Aa) circle [radius=0.05];
\fill[green] (Bb) circle [radius=0.05];
\fill[green] (Cc) circle [radius=0.05];
\fill[green] (Dd) circle [radius=0.05];
\fill[red]  (A) circle [radius=0.08];
\fill[red] (B) circle [radius=0.08];
\fill[red] (C) circle [radius=0.08];
\fill[red] (D) circle [radius=0.08];
\fill[red] (H) circle [radius=0.1];
\draw [thick] (A)--(B)--(C)--(D)--(A); 
\draw [green,dashed] (A)--(F3);
\draw [green,dashed] (C)--(F8);
\draw [green,dashed] (D)--(F6);
\draw [dashed] (F2)--(F5);
\draw [dashed] (F4)--(F7);
\draw [dashed] (F15)--(F12);
\draw [dashed] (F1)--(F10);
\draw [dashed] (F9)--(F11);
\draw [dashed] (F14)--(F13);
\draw [blue] (R1)--(R2);
\draw [blue] (R4)--(R6);
\draw [red] (R1)--(R6);
\draw [red] (R2)--(R4);

\draw [dashed,magenta] (1.86,2.4) to [out=60,in=120] (3.4,2.48);
\draw [dashed,magenta] (1.86,2.4) to [out=225,in=120] (1.86,1);
\node [right] at (2.1,1.2)  {\scriptsize $\alpha$};
\node [left] at (3.86,1.8) {\scriptsize $\alpha$};
\node [below] at (3,2.3) {\scriptsize{$\pi-\alpha$}};
\node [above] at (3.07,0.7) { \scriptsize{$\pi-\alpha$}};
\node [magenta,above] at (2.5,2.8) {\small $\alpha$};
\node [magenta,left] at (1.6,1.8) {\small $\beta$};
\node[right,red] at (B) {\footnotesize{$c_4$}};
\node[left,red] at (A) {\footnotesize{$c_1$}};
\node[right,red] at (C) {\footnotesize{$c_3$}};
\node[left,red] at (D) {\footnotesize{$c_2$}};
\end{tikzpicture}} 
\caption{Illustration of the notation}\label{fig2}
\end{figure}
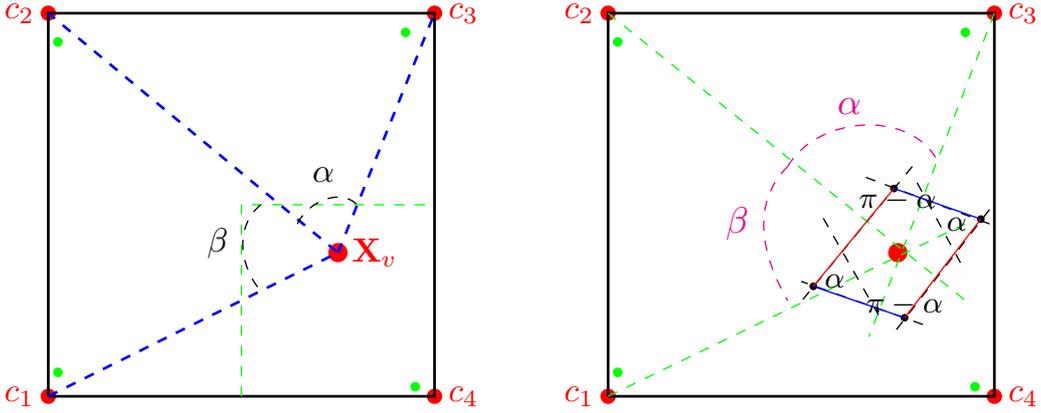

For each $i=1,2,3$, let  
$R_i(v) = \hat{r}(d_G(v,v_i)\!-\!\frac12) \; \big(= \Theta(\sqrt{n})\big)$, and let $C_i(v)$ be the circle centred on the corner $c_i$ with radius $R_i(v)$.
Also, let $A_i(v)$ be the annulus centred on $c_i$ formed by circles of radii $R_i(v) \pm \hat{r}(\frac12 + \frac{\eps}4)$.  
We can construct these three circles. (We can also construct the corresponding annuli -- assuming as we may that we are given a rational $\eps$ -- though we do not need to do so to prove the theorem.)
Note that $\X_v$ must lie in each of the annuli; 
for, since $\mathcal{B}_3$ holds, 
 $\hat{r} (d_G(v, v_i)+\eps/4) \ge d_E(\X_v,c_i) \ge \hat{r}(d_G(v, v_i) - (1 + \eps/4))$, and hence
  $R_i(v) + \hat{r}(\frac12 + \frac{\eps}4) \ge d_E(\X_v,c_i) \ge R_i(v) - \hat{r}(\frac12 + \frac{\eps}4)$.  It is convenient to consider the circles and annuli in pairs.

Consider first the circles $C_2(v), C_3(v)$ and corresponding annuli $A_2(v), A_3(v)$.  The circles intersect below the line $c_2 c_3$ in a point $\Y_{23}(v)$, where the tangents are at angle $\alpha +o(1)$ (and $\pi - \alpha+o(1)$).
The annuli intersect below the line $c_2 c_3$ 
in a set $B_{23}(v)$ 
which is -- up to lower order terms accounting for curvatures -- a parallelogram 
 $RH_{23}(v)$ with (interior) angles $\alpha$ and $\pi-\alpha$  (see Figure~\ref{fig2}, right picture).  (We chose to consider the circles and annuli with centres far from $\X_v$ so that curvatures would be negligible.) In fact, $RH_{23}(v)$ is 
a rhombus,  
 as in each annulus the radii differ by the same value 
$(1+\eps/2) \hat{r}$; and since the heights are equal, the sides must be of equal length. Further, the point $\Y_{23}(v)$ is at the centre of the rhombus (up to lower order terms), where the diagonals cross.
 (It might happen that some part of the rhombus is actually outside $\SR$, but since this makes the region which we know contains $\X_v$ smaller, it is only helpful for us.) 

The circles $C_1(v), C_2(v)$ and corresponding annuli $A_1(v), A_2(v)$ behave in exactly the way described above for $C_2(v), C_3(v)$ and corresponding annuli.  In particular, the annuli $A_1(v)$, $A_2(v)$ intersect to the right of the line $c_1 c_2$ in a set $B_{12}(v)$ which is close to a rhombus $RH_{12}(v)$ with angles $\beta$ and $\pi-\beta$.

Now consider the circles $C_1(v), C_3(v)$ and corresponding annuli $A_1(v), A_3(v)$; and for convenience let us restrict our attention to the case when $\alpha, \beta \leq \pi/3+o(1)$ (so $\X_v$ is not near the centre $(0,0)$ of $\SR$; in the next paragraph we shall see why it suffices to have this assumption on $\alpha$ and $\beta$).
The annuli $A_1(v)$, $A_3(v)$ intersect inside (or near) the bottom right quarter square $Q(n,4)$ in a set $B_{13}(v)$ which is close to a rhombus $RH_{13}(v)$ with angles $\alpha+\beta$ and $\pi-\alpha-\beta$, where both these angles are in the interval between $\pi/2$ and $2 \pi/3 +o(1)$.

Among these three pairs of circles and corresponding rhombi, we will consider one whose angles are closest to $\pi/2$.  Let us check that there must be at least one with angles in the interval $[\pi/3, 2\pi/3]$ -- we call the corresponding rhombus \emph{squarelike}. 
Indeed, suppose that this is not the case for $RH_{12}(v)$ or $RH_{23}(v)$. Then, since $\pi/4 \leq \alpha, \beta \leq \pi/2+o(1)$, we must have $\alpha, \beta < \pi/3$.
Then, however, $\pi/2 \leq \alpha+\beta < 2\pi/3$, and so $RH_{13}(v)$ is the desired squarelike rhombus. 
Further, the maximum distance from the centre $\Y_{13}(v)$ of the rhombus $RH_{13}(v)$ (the intersection of the diagonals) to a point in the set $B_{13}(v)$
 is half the length $d$ of the long diagonal (recall that we assume $n$ sufficiently large, so that we can safely ignore curvature issues and we can approximate $B_{13}(v)$ arbitrarily well by a rhombus). 

Pick a pair of circles and corresponding rhombus such that their angles are closest to $\pi/2$, and
without loss of generality suppose that the rhombus is $RH_{23}(v)$.  We set $\Phi(v)$ to be a point we calculate within distance 1 
of $\Y_{23}(v)$ (or arbitrarily close to $\Y_{23}(v)$).
 Clearly, the further away the angles $\alpha$ and $\pi-\alpha$ are from $\pi/2$, the longer the long diagonal, and we may thus assume the worst case of $\alpha=\pi/3$ and $\pi-\alpha =2\pi/3$. The shorter diagonal of such a rhombus splits it into two equilateral triangles, with height
$(1+\eps/2) \hat{r}$; and thus half the length $d$ of the longer diagonal is also $(1+\eps/2) \hat{r}$, see Figure~\ref{figCalc}.
 Thus in general  
\[ d/2 \leq (1+\eps/2) \hat{r} \leq (1+\eps) r . \] 
 \begin{figure}
\scalebox{1.4}{%
\begin{tikzpicture}
 \coordinate (A) at (0,0); 
\coordinate (B) at (1,0);
 \coordinate (C) at (3.16,0); 
 \coordinate (D) at (4.16,3); 
\coordinate (E) at (1, 3); 
\coordinate (F) at (2.08,1.5); 
\coordinate (G) at (3.6,2.8);
\coordinate (H) at (4, 1.2);
\fill[red] (F) circle [radius=0.04];

\draw [black] (A)--(C)--(D)--(E)--(A);
\draw [blue,dashed] (E)--(B);
\draw [magenta] (A)--(D);
\draw [magenta] (E)--(C);
\draw [black,dashed] (0.5,0)  to [out=90,in=280] (0.3,0.3);
\draw [black,dashed] (2.8,0)  to [out=90,in=180] (3.27,0.4);
\draw [magenta,dashed] (1.9,1.8)  to [out=45,in=135] (2.5,1.8);
\draw [->,blue, dashed] (0.4,1.5) -- (0.97,1.5);
\draw [->,red, dashed] (0.4,1.45) -- (0.7,0.54);
\node [left] at (.5,1.5) {\scriptsize{$(1+\eps/2)\hat{r}$}};
\node [right] at (3,2.1) {$d$};
\node [right] at (0.5,0.2) {\scriptsize{$\pi/6$}};
\node [right] at (2.4,0.5) {\scriptsize{$2\pi/3$}};
\node [right] at (1.85,2.11) {\scriptsize{$\pi/2$}};
\node [right,red] at (F) {\tiny{$\Phi(v)$}};
\end{tikzpicture}}
\caption{Angles in $R_{23}$, for the extreme case $\alpha=\pi/3$}\label{figCalc}
\end{figure}
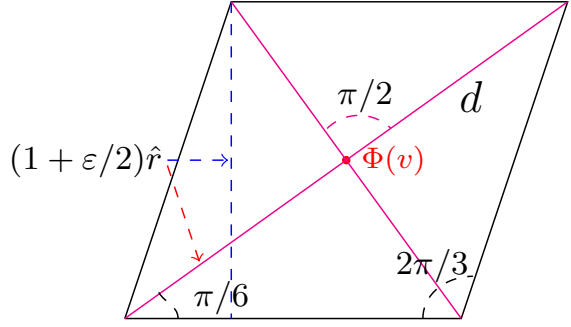

Hence, 
the Euclidean distance from $\Phi(v)$ to any point inside $B_{23}(v)$ is at most $d/2 \leq (1+\eps)r$.
But $\X_v \in B_{23}(v)$, so $d_E(\Phi(v), \X_v) \leq (1+\eps) r$.
This holds for each $v \in V^-$, so
we have found an embedding $\Phi$ with displacement at most $(1+\eps)r$.  (If the point of the intersection of the two diagonals falls outside $\SR$, then we project this point to the closest point on the boundary of $\SR$, and clearly the distance to $\X_v$ can only decrease). 
\smallskip

We have now established~(\ref{claimd}), and it remains only to justify treating the random symmetry $\pi$ as the identity.  We want to replace the conditioning on $\mathcal{B}_2$ in~(\ref{claimd}) by conditioning on $\mathcal{B}_1$.

Let $t>0$ and let $\sigma$ be a symmetry. 
Arguing as for~(\ref{eqn.pi}), and noting also that
$d^*(\Psi,\Phi) = d^*(\sigma^{-1} \circ \Psi,\Phi)$,  we have
\begin{eqnarray*}
&&
\pr\left(\mathcal{B}_1 \land (d^*(\Psi,\Phi) \leq t) \land (\pi(\Psi)=\sigma) \right)\\
&=&
\pr\left(\mathcal{B}_1 \land (d^*(\sigma^{-1} \circ \Psi,\Phi) \leq t) \land (\pi(\sigma^{-1} \circ \Psi)=\sigma_0) \right)\\
&=&
\pr\left(\mathcal{B}_1 \land (d^*(\Psi,\Phi) \leq t) \land (\pi(\Psi)=\sigma_0) \right);
\end{eqnarray*}
so, summing over $\sigma$ we have
\[ \pr\left(\mathcal{B}_1 \land (d^*(\Psi,\Phi) \leq t) \right) = 8 \, \pr\left(\mathcal{B}_2 \land (d^*(\Psi,\Phi) \leq t) \right).\]
But, similarly to~(\ref{eqn.pi}), we have $\pr(\mathcal{B}_1) = 8 \, \pr(\mathcal{B}_2)$, so 
\[ \pr(d^*(\Psi,\Phi) \leq t \mid \mathcal{B}_1) = \pr(d^*(\Psi,\Phi) \leq t \mid \mathcal{B}_2).\]
Hence
\[ \pr(d^*(\Psi,\Phi) \leq (1+\eps)r  \mid \mathcal{B}_1) = \pr(d^*(\Psi,\Phi) \leq (1+\eps)r \mid \mathcal{B}_2) = 1-o(1)\]
by~(\ref{claimd}).  Since $\mathcal{B}_1$ holds whp, this completes the proof of Theorem~\ref{thm:main}. 
\smallskip

\emph{Running time analysis}

The $O(n^2)$ bound on the running time follows from two observations. On the one hand, by Proposition~\ref{prop.rhat}
 at most $O(n^2)$ steps are needed for calculating~$\hat{r}$. On the other hand, the four vertices close to the four corners can be found in time $O(n)$;
and once they have been found, all graph distances from them
can be computed in time $O(n^2)$ (for example, by breadth-first search). If $\eps$ is not rational, then the algorithm can be applied with some rational $\eps'$ such that $0 < \eps' < \eps$.

\section{Proof of Theorem~\ref{thm:main2}}
\label{sec:Main2new}

In this section, we prove Theorem~\ref{thm:main2}, on approximate reconstruction from the 
family of vertex orderings.
As in the algorithm in Theorem~\ref{thm:main}, the algorithm here has two main steps. In the first subsection we give a sketch of the method, in the second subsection we fill in details of step (a), in the third subsection we give several preliminary results needed to analyse step (b), and in the final subsection we complete the proof of the theorem.

\subsection{Sketch of the algorithm}
\label{subsec.sketch}

The algorithm has two main steps. 
\begin{itemize}
\item{Step (a)}
We identify four vertices $v_i$ such that the corresponding points are whp near the four corners of $\SR$, and set $\Phi(v_i)=c_i$. 
\item{Step (b)}
For each other vertex~$v$ we construct two circles, and two corresponding thin annuli both whp containing $\X_v$, centred on a chosen pair of corners, such that the circles 
meet at an angle between $\pi/3$ and $2\pi/3$; and we set $\Phi(v)$ to be the relevant point of intersection of the circles (which is essentially the centre of the rhombus formed by the intersection of the annuli, as before). 
\end{itemize}

We obtain a much smaller displacement error than with random geometric graphs in Theorem~\ref{thm:main} since our annuli are much thinner.  We start with a sketch of the two steps (a) and (b) and of the proofs, before giving the full proofs.  First, however, we introduce some useful notation.
\smallskip

\noindent
\emph{Notation}
For each pair $u, v$ of vertices, we let $k(u,v)$ be the rank of $v$ in the vertex-ordering $\tau_u$.  Thus $k(u,u)=1$; and if $v$ is last in the order $\tau_u$ (farthest from~$u$) then $k(u,v)=n$, and we let $\far(u)=v$.  We assume that, given vertices $u$ and $v$, in constant time we can find the successor of $v$ in the list specifying~$\tau_u$.  Note that by reading through this list we can determine $\far(u)$ in $O(n)$ steps.

For $0 \leq s \leq \sqrt{2}$, let $\lambda(s)$ be the area of the set of points in the unit square $\SU$, centred at $(0,0)$, within distance $s$ of a fixed corner point, say $(-\frac12,-\frac12)$.
Observe that $\lambda$ gives an increasing bijection from $[0,\sqrt{2}]$ to $[0,1]$.  For $0 \leq t \leq 1$ let $s(t) = \lambda^{-1}(t)$, the unique value $s$ in $[0,\sqrt{2}]$ such that $\lambda(s)=t$.  
Also, define $\lambda_n(s) = \lambda(s/\sqrt{n})\, n$ for $0 \leq s \leq \sqrt{2n}$, so
$\lambda_n(s)$ is the area of the set of points in the square $\SR$ within distance $s$ from a fixed corner point; and define $s_n(k) = s(k/n) \sqrt{n}$, and note that $\lambda_n(s_n(k))=k$.

It will be convenient here to say that a sequence $A_n$ of events holds \emph{with very high probability} (wvhp) if $\pr(A_n) = 1-o(1/n)$ as $n \to \infty$.
Finally, let $\omega=\omega(n) \to \infty$ slowly, and in particular assume as in the previous section that $\omega \ll \sqrt{\log n}$.
\bigskip

\noindent
\emph{Sketch of step (a): finding points near the corners of $\SR$}

We call a pair of vertices $\{v, v'\}$ \emph{extreme} if $v'=\far(v)$ and $v=\far(v')$.
in the ordering $\tau_{v'}$.  
We show that for each pair of opposite corners $c, c'$ whp there is an extreme pair $\{v, v'\}$ with $\X_v$ close to $c$ and $\X_{v'}$ close to $c'$, and we can find such extreme pairs quickly. Thus the following event $\mathcal{C}_1$ holds whp.


Let $\mathcal{C}_1$ be the event that this procedure yields vertices $v_1,\ldots,v_4$ such that, for some (unknown, random) 
symmetry $\pi$ of $\SR$, we have $\X_{v_i} \in B(\pi(c_i), \omega)$ for each $i \in [4]$ (so $\X_{v_i}$ is very close to the corner $\pi(c_{i})$).
Also, for given distinct vertices $v_1,\ldots,v_4\,$ let $\mathcal{C}_1(v_1,\ldots,v_4)$ be the event that $\mathcal{C}_1$ holds with this choice of the `corner' vertices.\\

\noindent
\emph{Sketch of step (b): constructing the circles and annuli}

Suppose that the event $\mathcal{C}_1(v_1,\ldots,v_4)$ holds.
We use the orders $\tau_{v_1}, \ldots, \tau_{v_4}$ to estimate the distances from the corners.  

Let $V^- = V \setminus \{v_1,\ldots,v_4\}$.
For each vertex $v \in V^-$, we define $i_0=i_0(v)$  to be the least $j \in [4]$ 
such that $k(v_{j},v) = \min_{i \in [4]} k(v_i,v)$ (picking the least $j$ is just a tie-breaker).
(Thus $\pi(c_{i_0})$ is likely to be the closest corner to $\X_v$.)  Fix $v \in V^-$, and let $I^-=[4]\, \backslash \{i_0\}$.  We consider 
 the three orders $\tau_{v_i}$ for $i \in I^-$.
(We do not use $\tau_{v_{i_0}}$, and do not consider distances from $\pi(c_{i_0})$, so that we work only with `large' distances, and thus we do not need to worry about curvature, exactly as before.)
We want to find a pair of thin annuli, centred on two of the three corners near the points $\X_{v_i}$ for $i \in I^-$, such that wvhp
the `near-rhombus' formed by the intersection 
of the two annuli is squarelike, and wvhp $\X_v$ is in this near-rhombus.

We shall see that, for each $i \in I^-$, we have $k(v_i,v) > 0.19 \, n$ 
wvhp (so we will work only with `large' distances); and for $i=i_0 \pm 1$, we have $k(v_i,v) < 0.91 n$ wvhp. 
(Indices in $[4]$ are always taken mod 4.)
Let $\alpha_0 = \frac{\pi}{9}+\frac{1}{\sqrt{3}} \approx 0.9264$: 
later we shall choose a rational constant $\alpha$ slightly bigger than~$\alpha_0$.
When $k(v_i,v)$ is 
at least $0.19n$ 
and at most $\alpha n$, we have a good estimate of $d_E(\pi(c_i),\X_v)$ (see Lemma~\ref{lem.conc}).
Also, as in the proof of Theorem~\ref{thm:main}, it suffices to consider the case when $\pi$ is the identity map. 

There are two cases depending on the rank $k(v_{i_0+2},v)$ (note that $v_{i_0}$ and $v_{i_0+2}$ are at opposite corners of $\SR$, and so $k(v_{i_0+2},v)$ is likely to be the largest of the values $k(v_i,v)$ for $i \in I^-$):
case~(i) when $k(v_{i_0+2},v) \leq \alpha n$, and case~(ii) when $k(v_{i_0+2},v) > \alpha n$.
In case~(i) we form three circles and three thin annuli, and then choose a best pair of them, as in the proof of Theorem~\ref{thm:main}.  In case~(ii), 
we just use the two circles and thin annuli centred on the corners $c_{i_0-1}$ and $c_{i_0+1}$ (see Lemma~\ref{lem.squarelike}).

\subsection{Filling in the details for step (a)}\label{subsec.a}
We need to show that the method sketched above works.
We first consider step (a), and show that indeed the event $\mathcal{C}_1$ holds whp.  We need one deterministic preliminary lemma.
\begin{lemma} \label{lem.farpoints}
Let $\x \in \SR$, and let
\[ t = \max\{d_E(\x,\y): \y \in \SR\} =
\max_i d_E(\x,c_i)\]
(and note that $t \geq \sqrt{n/2}$).
Then (assuming that $n$ is sufficiently large)
\[ \max \{d_E(\x,\y): \y \in (\SR \setminus \cup_i B^o(c_i, \omega))\} \leq t - \omega/3. \]
(Here $B^o$ denotes an open ball.)
\end{lemma}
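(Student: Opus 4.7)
The plan is to show that the maximum of $d_E(\x,\cdot)$ over the compact set $\SR\setminus\bigcup_i B^o(c_i,\omega)$ is attained on one of the four quarter-circular arcs at the corners, and then to bound it there by a direct geometric calculation. Any maximiser $\y^*$ must lie on the boundary of this set, since otherwise a small displacement of $\y^*$ in the direction $(\y^*-\x)/\|\y^*-\x\|$ would stay in the set and strictly increase $d_E(\x,\cdot)$. The boundary is a union of four quarter-circle arcs and four truncated edges of $\SR$, and since $d_E(\x,\cdot)$ is convex, its restriction to each truncated edge attains its maximum at an endpoint, and each such endpoint lies on an adjacent arc. Thus we may assume that $d_E(\y^*,c_i)=\omega$ for some $i\in[4]$.

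Parametrise the arc inside $\SR$ around $c_i$ as $\y(\theta)=c_i+\omega(\cos\theta\,\hat e_1+\sin\theta\,\hat e_2)$ for $\theta\in[0,\pi/2]$, where $\hat e_1,\hat e_2$ are unit vectors along the two sides of $\SR$ meeting at $c_i$, pointing into $\SR$. Writing $\x-c_i=a\hat e_1+b\hat e_2$ with $a,b\ge 0$, a direct computation gives
\[
 d_E(\x,\y(\theta))^2 \;=\; d_E(\x,c_i)^2 + \omega^2 - 2\omega\bigl(a\cos\theta+b\sin\theta\bigr),
\]
and the unique interior critical point $\tan\theta=b/a$ is a minimum; intuitively, the direction from $c_i$ away from $\x$ points outside $\SR$, so the farthest arc point from $\x$ must be an endpoint. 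Hence the maximum over the arc equals $d_E(\x,c_i)^2+\omega^2-2\omega m_i$ with $m_i:=\min(a,b)$. Setting $\delta_i:=t^2-d_E(\x,c_i)^2\ge0$, the desired bound $d_E(\x,\y^*)\le t-\omega/3$ rearranges to
\[
 \delta_i + 2\omega m_i \;\ge\; \tfrac{2}{3}t\omega + \tfrac{8}{9}\omega^2 \qquad\text{for every }i\in[4].
\]

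By the symmetries of $\SR$ we may assume $\x=(x_1,x_2)$ with $x_1,x_2\ge0$, so that $c_1=(-\sqrt n/2,-\sqrt n/2)$ is a farthest corner and $t=d_E(\x,c_1)$. Explicit evaluation of $(a,b)$ at each of the four corners gives $\delta_1=0$ with $m_1\ge\sqrt n/2$, while for $i\in\{2,3,4\}$ a short rearrangement (for instance, for $i=3$ one gets $\delta_3+2\omega m_3 = 2x_1(\sqrt n-\omega)+2\sqrt n\, x_2+\omega\sqrt n$ after taking $\min(a,b)=\sqrt n/2-\max(x_1,x_2)$) yields $\delta_i+2\omega m_i\ge\omega\sqrt n$. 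Since $t\le\sqrt{2n}$ and $2\sqrt{2}/3<1$, combined with $\omega=o(\sqrt n)$ (which follows from $\omega\ll\sqrt{\log n}$), we obtain $\omega\sqrt n\ge \tfrac{2}{3}t\omega+\tfrac{8}{9}\omega^2$ for $n$ sufficiently large, completing the proof. The main delicate point is that the constant $\omega/3$ is essentially the best one can hope for from this geometric argument: in the case of the farthest corner, all the slack must come from the inequality $\sqrt n/2>\sqrt{2n}/3$ (equivalently $3>2\sqrt 2$), whose $\Theta(\sqrt n)$ margin comfortably dominates the $O(\omega)$ correction since $\omega\ll\sqrt n$.
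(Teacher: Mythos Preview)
Your proof is correct and complete. It differs from the paper's argument in two respects worth noting.

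First, the paper asserts (``it is easy to see'') that the maximiser $\y^*$ lies on the arc around the single farthest corner $c_3$, and then works only on that arc. You instead show that $\y^*$ lies on \emph{some} arc (via the interior-point and convexity-on-segments arguments) and then verify the desired inequality on all four arcs simultaneously. Your route is more self-contained, at the cost of checking three extra cases; the paper's route is shorter but leaves the reduction to a single arc unjustified.

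Second, the paper uses a first-order expansion $d_E(\x,\y)=t-(1+o(1))(ap+bq)/t$ and then bounds $(ap+bq)/t\ge b\omega/t\ge\omega/\sqrt{5}$, invoking $a\le 2b$ (which holds because $\x$ lies in the quarter opposite $c_3$). You work instead with exact squared distances, reducing everything to the single inequality $\delta_i+2\omega m_i\ge\omega\sqrt{n}$, and close with the constant comparison $1>2\sqrt{2}/3$. The paper's asymptotic approach yields the slightly sharper bound $t-(1+o(1))\omega/\sqrt{5}$, but since the lemma only asks for $t-\omega/3$, your exact-computation approach is cleaner and avoids tracking $o(1)$ terms.
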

%
\begin{proof}[Proof of Lemma~\ref{lem.farpoints}]
Suppose wlog that $\x$ is in the bottom left quarter of $\SR$ (containing $c_1$).  It is easy to see that $d_E(\x,c_3)=t$, and
$\max \{d_E(\x,\y): \y \in (\SR \setminus \cup_i B^o(c_i, \omega))\}$ is achieved at some point $\y \in \SR$ with $d_E(\y,c_3)=\omega$. 
 
Let $c_3-\x = (a,b)$, so $\sqrt{n/2} \leq a,b \leq \sqrt{2n}$, and $t= \sqrt{a^2 +b^2}$.  Suppose further wlog that $a \geq b$ (that is, $\x$ lies on or above the line $y=x$), and note that $a \leq 2b$.

Consider a point $\y$ with $d_E(c_3,\y)=\omega$.  We claim that
\begin{equation} \label{eqn.y}
d_E(\x,\y) \leq t-(1+o(1)) \omega/\sqrt{5}.
\end{equation}
To see this, write $c_3-\y=(p,q)$.  Then $p,q \geq 0$ and $p^2+q^2=\omega^2$, so $p+q \geq \omega$; and we have
\begin{eqnarray*}
d_E(\x,\y) &=&
\big( (a-p)^2 + (b-q)^2 \big)^{1/2}\\
&=&
\big( t^2 - (1+o(1))(2ap+2bq) \big)^{1/2}\\
&=&
t \big( 1 - (1+o(1))(ap+bq)/t^2 \big)\\
&=&
t  - (1+o(1))(ap+bq)/t.
\end{eqnarray*}
But $a \geq b$ and $p+q \geq \omega$, so
\[ d_E(\x,\y) \leq t-(1+o(1)) b(p+q)/t \leq t- (1+o(1)) b \omega /t.\]
Also, $a \leq 2b$ so $b/t \geq b / \sqrt{4b^2+b^2} = 1/\sqrt{5}$. The claim~(\ref{eqn.y}) now follows, and this completes the proof of the lemma.
\end{proof}

\noindent
\emph{Finding points near the corners of $\SR$}

Let $\mathcal{C}_2$ be the event that, for each $i \in [4]$, there is a vertex $u_i$ such $\X_{u_i} \in B(c_i, \omega/4)$.  Then 
$\mathcal{C}_2$ holds whp.
To see this, note that, for a fixed $i \in [4]$
\[ \pr(\X_v \not\in B(c_i, \omega/4) \mbox{ for each } v) = (1- \tfrac14 \pi (\omega/4)^2/n)^n < e^{- \tfrac{\pi}{64} \omega^2} = o(1);\]
and use a union bound.  From now on, assume that $\mathcal{C}_2$ holds.
%
%
We claim that, for each vertex $v$, we have
\begin{equation} \label{eqn.nearcorners}
 \X_{\far(v)} \in \bigcup_{i=1}^4 B(c_i, \omega).
\end{equation}
To see this, 
suppose wlog that $c_1$ is the corner farthest from $\X_v$: then
\[ d_E(\X_v,\X_{\far(v)}) \geq d_E(\X_v, c_1) - \omega/4,\] 
since there is a point $\X_u$ in $B(c_1,\omega/4)$; and so, by Lemma~\ref{lem.farpoints}, for each vertex $v'$ such that 
$\X_{v'}\not\in \bigcup_{i=1}^4 B(c_i, \omega)$,
\[ d_E(\X_v,\X_{v'}) \leq d_E(\X_v, c_1) - \omega/3 < d_E(\X_v,\X_{\far(v)}),\]
which establishes the claim~(\ref{eqn.nearcorners}).

Pick any vertex $v$, let $u=\far(v)$, and let $u'=\far(u)$.  Then by~(\ref{eqn.nearcorners}) there is a pair of opposite corners $c, c'$ such that $\X_u \in B(c,\omega)$ and $\X_{u'} \in B(c',\omega)$.  We may take $v_1$ as $u$ and $v_3$ as $u'$.  Let us suppose wlog that $c$ is $c_1$ and $c'$ is~$c_3$.

Now we must find two vertices such that the corresponding points are close to the opposite corners $c_2$ and $c_4$, and this needs more care.  Let $r = 0.9 \sqrt{n}$ (the exact value $0.9$ is not important), and let $B_i = B(c_i, r)$.  
We claim that, if $\x \in \SR \setminus (B_1 \cup B_3)$ then
\begin{equation} \label{eqn.DiffOfMaxs} 
\max\{ d(c_2,\x), d(c_4,\x)\} -  \max \{ d(c_1,\x), d(c_3,\x)\} = \Omega(\sqrt{n}).
\end{equation}
\begin{proof}[Proof of claim (\ref{eqn.DiffOfMaxs})]
The circles of radius $r$ centred on $c_1$ and $c_3$ meet twice, at points $P$ and $Q$ on the diagonal line between $c_2$ and $c_4$, both at distance $t$ from the origin, where $t^2 = r^2-n/2 = 0.31 n$. The first maximum value is minimised at these points $P$ and $Q$. Thus the first maximum value is at least $t + \sqrt{n/2} \approx 1.264 \sqrt{n}$.
The second maximum value is maximised at the four points on the sides of $\SR$ at distance 
 $0.1\sqrt{n}$ from $c_2$ and $c_4$. Thus the second maximum value is at most $\sqrt{1^2 +0.1^2} \sqrt{n} \approx 1.005 \sqrt{n}$.  Hence the difference between the maxima is at least $0.25 \sqrt{n}$, proving~(\ref{eqn.DiffOfMaxs}).
\end{proof}

It follows now from~(\ref{eqn.nearcorners}) and~(\ref{eqn.DiffOfMaxs}) that if we can find a vertex $v$ with $\X_v \in \SR \setminus (B_1 \cup B_3)$, and we set $u=\far(v)$ and $u'=\far(u)$, then $\X_{u}$ is in one of the balls $B(c_2,\omega)$ and $B(c_4,\omega)$ and $\X_{u'}$ is in the other.  It remains then to find such a vertex $v$.

For a measurable subset $S$ of $\SR$, let $N(S)$ be the random number of vertices~$v$ with $\X_v \in S$.  Observe that $N(B(c_i,s_n(k))) \sim \Bin(n,k/n)$, with mean~$k$.  For $i=1,3$,
\[ B_i \subseteq B(v_i, r+\omega) \subseteq B(c_i, r +2 \omega),\]
and whp (by Chebyshev's inequality),
\[N(B(c_i, r +2 \omega)) = (\pi/4)(r+2\omega)^2 + O(\omega \sqrt{n}) = (\pi/4)r^2 + O(\omega \sqrt{n}), \]
so whp each vertex $v$ with $\X_v \in B_i$ has rank $k(v_i,v) \leq (\pi/4)r^2 + \omega^2 \sqrt{n}$. 
Also, for $i=1,3$,
\[ B(c_i, r+ n^{1/3}) \subseteq B(v_i, r+ n^{1/3} +\omega) \subseteq B_i^+:= B(c_i, r+ n^{1/3} +2 \omega),\]
and whp
\[N(B(c_i, r +n^{1/3})) = (\pi/4)(r+n^{1/3})^2 + O(\omega \sqrt{n}) = (\pi/4)r^2 + \Theta(n^{5/6}), \]
so whp each vertex $v$ with $\X_v \not\in B_i^+$ has rank $k(v_i,v) \geq (\pi/4)r^2 + n^{4/5}$.  But the area of $\SR \setminus (B_1^+ \cup B_3^+)$ is $\Omega(n)$, so whp there are (many) vertices $v$ with
$\X_v \not\in (B_1^+ \cup B_3^+)$ and so with
$\min\{k(v_1,v), k(v_3,v)\} \geq (\pi/4)r^2 + n^{4/5}$.  Hence, by reading through $\tau_{v_1}$ and $\tau_{v_3}$ we can find a vertex $v$ with $\min\{k(v_1,v), k(v_3,v)\} \geq (\pi/4)r^2 + n^{4/5}$; and then
$\X_v \in \SR \setminus (B_1 \cup B_3)$,
and we are done. Note that we have looked at only four of the orders $\tau_v$.

There is an (unknown) symmetry $\pi$ of $\SR$ such that 
 $\X_{v_i}  \in B(\pi(c_i), \omega)$ for each $i \in [4]$;
and it follows that $\mathcal{C}_1$ holds whp, as required in step~(a).

\subsection{Preliminary results needed to analyse step (b)}
\label{subsec.b}

In this subsection we give preliminary results concerning (i) the area function $\lambda(s)$ for $\SU$, (ii) the
angle $c_2 \x c_4$ at a point $\x \in \SU$ far from $c_1$, (iii) estimating values $s_n(k)$ from ranks $k$, (iv) estimating Euclidean distances from values $s_n(k)$, (v) bounds on the ranks $k(v_i, v)$ for $i \neq i_0(v)$, and (vi) how to deal with the case when $k(v_{i_0 +2}, v)$ is large.
\medskip

\noindent
\emph{(i) On the area function $\lambda(s)$ for $\SU$}

If $0 \leq s \leq 1$ then clearly $\lambda(s) = \tfrac14 \pi s^2$.  
Let $1 < s < \sqrt{2}$. 
 Let $A$ be the point on the right side of $\SU$ at distance $s$ from the corner point $c_1$, so $A=(\frac12, -\frac12 +\sqrt{s^2 -1})$; and similarly $B=(-\frac12 +\sqrt{s^2 -1}, \frac12)$ 
is the point on the top side of $\SU$ at distance $s$ from $c_1$.
Let $\psi=\psi(s)$ be the angle $Ac_1B$, which is the angle subtended at $c_1$ by the curved part of the boundary of $\SU \cap B(c_1,s)$.  We claim that
\begin{equation} \label{eqn.phi}
  \psi(s) = \sin^{-1} (2s^{-2} -1) 
\end{equation} and
\begin{equation} \label{eqn.lambda}
 \lambda(s) = \tfrac12 s^2 \psi(s) + \sqrt{s^2 -1}.
\end{equation}
To establish this claim, let $\theta$ be the angle $Ac_1c_4$.  Then $\cos \theta =1/s$ and so $\cos(2\theta) = 2 \cos^2 \theta -1= 2s^{-2} -1$.  But the angle $Bc_1c_2$ also equals $\theta$, so $\psi + 2 \theta = \pi/2$. Thus $\sin \psi = 2s^{-2} -1$, giving the formula for $\psi$ in~(\ref{eqn.phi}).
Also, the sum of the areas of the triangles $c_1Ac_4$ and $c_1Bc_2$ is $\sqrt{s^2-1}$, and the sector with straight sides $c_1A$ and $c_1B$ (and internal angle $\psi$) has area $\frac12 s^2 \psi$; and these add up to $\lambda(s)$, establishing~(\ref{eqn.lambda}).

For $s = \tfrac{2}{\sqrt{3}}$ we have $\psi = \sin^{-1} \tfrac12= \tfrac{\pi}{6}$; and so
\begin{equation} \label{eqn.lambda3}
 \lambda(s) = \tfrac12 \cdot \tfrac43 \cdot \tfrac{\pi}{6} + \tfrac{1}{\sqrt{3}}= \tfrac{\pi}{9} + \tfrac{1}{\sqrt{3}} = \alpha_0  \approx 0.926416 .
\end{equation}
Also, for $1 \leq s \leq \tfrac{2}{\sqrt{3}}$ we have 
$\psi \geq \tfrac{\pi}{6}$.
\medskip

\noindent
\emph{(ii) On the angle $c_2 \x c_4$ at a point $\x \in \SU$ far from $c_1$}

We need to consider values of $s$ near to $\sqrt{2}$.  We shall show that
\begin{equation} \label{claim.psi}
\mbox{for each } \x \in \SU \mbox{ with } \dE(c_1,\x) \geq \tfrac{2}{\sqrt{3}}, \mbox{ the angle $c_2 \x c_4$ is at most } \tfrac{2 \pi}{3}.
\end{equation}
(We shall use this result in the proof of Lemma~\ref{lem.squarelike}.)
To prove~(\ref{claim.psi}), let $F$ (for `far' from $c_1$) be the set of points $(x,y) \in \SU$
with $x+y \geq \tfrac1{\sqrt{3}}$. 
The line $x+y= \frac1{\sqrt{3}}$ meets the line $x=\frac12$ at the point $P = (\frac12, \frac1{\sqrt{3}}-\frac12)$, and meets the line $y=\frac12$ at the point $Q = (\frac1{\sqrt{3}}-\frac12, \frac12)$. 
Consider the midpoint $\x^* =  (\tfrac1{2\sqrt{3}}, \tfrac1{2\sqrt{3}})$ of the segment of the line $x+y= \tfrac1{\sqrt{3}}$ between $P$ and $Q$.
The point  $\x^*$ is at distance $\tfrac1{\sqrt{6}}$ from the origin $O$.  Thus the angle $c_2 \x^* O$ 
is $\tan^{-1} \tfrac{1/\sqrt{2}}{1/\sqrt{6}}= \tan^{-1} \sqrt{3} = \pi/3$; and so the angle $c_2 \x^* c_4$ is $2\pi/3$.
We claim that, 
\begin{equation} \label{claim.2pi3}
\mbox{for each point } \x \in F, \mbox{ the angle } c_2 \x c_4 \mbox{ is at most } \tfrac{2\pi}{3}.
\end{equation}
This will follow from the above, once we check that the angle is maximised over $\x \in F$ at $\x = \x^*$.  
Clearly it is maximised at some point on the line $x+y= \tfrac1{\sqrt{3}}$.
Note that the lines $c_2c_4$ and $x+y=\frac1{\sqrt{3}}$ are parallel.  Let $a,b>0$, and consider the parallel lines $y=0$ and $y=b$.  Consider the origin O and the points $C=(a,0)$
 on the line $y=0$.  For each point $\z=(z,b)$ on the line $y=b$, let $\theta(z)$ be the angle $O\z C$.
It suffices now to show that $\theta(z)$ is maximised at $z=a/2$.
Write $\theta(z)$ as $\tan^{-1} \tfrac{z}{b} + \tan^{-1} \tfrac{a-z}{b}$, and differentiate: we find
\begin{eqnarray*}
\theta'(z) & = &
 \frac1{1+(z/b)^2} \frac1{b} \: + \: \frac1{1+((a-z)/b)^2} \big(\!-\!\frac1{b} \big)\\
 &=&
 \frac{ab}{(b^2+z^2)(b^2+(a-z)^2)} (a-2z)
\end{eqnarray*}
after some simplification.  Thus indeed $\theta(z)$ is maximised at $z=a/2$; and we have established the claim~(\ref{claim.2pi3}).

Since $P$ and $Q$ lie  on the line $x+y=\frac{1}{\sqrt{3}}$ and
\[ \dE(c_1,P) = \dE(c_1,Q) = (1+ \tfrac13)^{1/2} = \tfrac{2}{\sqrt{3}} \;\; (\approx 1.1547),
\]
 it follows that $\SU \setminus B(c_1, \tfrac{2}{\sqrt{3}}) \subseteq F$.  This completes the proof of~(\ref{claim.psi}).
\medskip

\noindent
\emph{(iii) Estimating values $s_n(k)$ from ranks $k$}

We now check that we can quickly estimate the values $s_n(k)$ needed to determine radii of circles and annuli.
\begin{lemma} \label{lem.estimate-s}
For fixed $0 < c_0 < c_1<1$, in linear time we can estimate all the values $s_n(k)$ for $c_0 n \leq k \leq c_1 n$ up to an additive error $\pm 1$.
\end{lemma}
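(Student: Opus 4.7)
The plan is to invert $\lambda_n$ by tabulating it at integer arguments and answering all queries by a single monotone sweep. Recall from part~(i) of the preliminary work that $\lambda$ has the explicit closed form $\lambda(s) = \tfrac{\pi}{4} s^2$ for $0 \le s \le 1$ and $\lambda(s) = \tfrac{1}{2} s^2 \psi(s) + \sqrt{s^2-1}$ for $1 \le s \le \sqrt{2}$, with $\psi(s) = \sin^{-1}(2 s^{-2} - 1)$; so $\lambda_n(s) = n \lambda(s/\sqrt{n})$ may be evaluated at any single point in $O(1)$ arithmetic operations (under the real-RAM model).

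First I would record a scaling estimate. Differentiating gives $\lambda_n'(s) = \sqrt{n}\,\lambda'(s/\sqrt{n})$. The function $\lambda'$ is continuous and strictly positive on $(0,\sqrt{2})$, hence bounded above and bounded away from $0$ on the compact sub-interval $[s(c_0), s(c_1)] \subset (0, \sqrt{2})$. Consequently, on the $s$-range $[s_n(c_0 n), s_n(c_1 n)]$ relevant for the queries (an interval of length $\Theta(\sqrt{n})$), we have $\lambda_n'(s) = \Theta(\sqrt{n})$ uniformly.

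The algorithm is now immediate. Tabulate $T(m) := \lambda_n(m)$ for every integer $m \in \{0, 1, \ldots, \lceil \sqrt{2n}\rceil\}$; this costs $O(\sqrt{n})$ evaluations, hence $O(\sqrt{n})$ time. Then process the queries in their natural increasing order $k = \lceil c_0 n\rceil, \lceil c_0 n\rceil+1, \ldots, \lfloor c_1 n\rfloor$, maintaining a pointer $m$ into the table. For each successive $k$, advance $m$ while $T(m+1) \le k$, stopping as soon as $T(m) \le k < T(m+1)$; since $\lambda_n$ is strictly increasing, this certifies $s_n(k) \in [m, m+1]$, and we output $\tilde{s}_k := m$, which satisfies $|\tilde{s}_k - s_n(k)| \le 1$ as required. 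Because the pointer is monotone and moves at most $O(\sqrt{n})$ steps in total, while there are $(c_1-c_0)n + O(1)$ queries each costing $O(1)$ amortised work, the grand total is $O(n)$.

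I do not foresee a serious obstacle here; the one small technical point is the uniform lower bound $\lambda_n'(s) \ge c\sqrt{n}$ on the relevant $s$-range, which guarantees both that the tabulated portion has length $O(\sqrt{n})$ and that consecutive entries $T(m+1)-T(m)$ grow like $\sqrt{n}$ (so that table lookups really do localise $s_n(k)$ inside a unit interval, rather than skipping over it). This follows directly from the explicit formula for $\lambda$ in part~(i) together with compactness of $[s(c_0),s(c_1)]$ in the open interval $(0,\sqrt{2})$.
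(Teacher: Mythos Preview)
Your proof is correct and takes a genuinely different route from the paper. The paper computes $s_n(k)$ by repeated bisection at a sparse grid of $k$-values (every $\lceil n^{1/3}\rceil$-th integer in the range), costing $O(n^{2/3}\log n)$ total, and then argues via the arc-length bound~(\ref{eqn.curvedpart}) that $s_n(k+t)-s_n(k)<\tfrac12$, so that reusing the nearest grid estimate keeps the error within $\pm 1$. You instead tabulate the \emph{forward} function $\lambda_n$ at integer arguments and invert by a monotone two-pointer sweep; this is cleaner and avoids bisection altogether, while the paper's approach has the minor advantage that one could tighten the error below $\pm 1$ just by refining the grid.

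One remark: your final paragraph about the uniform lower bound $\lambda_n'(s)\ge c\sqrt n$ is not actually needed. The tabulated range $\{0,1,\ldots,\lceil\sqrt{2n}\rceil\}$ has length $O(\sqrt n)$ unconditionally, and the localisation $s_n(k)\in[m,m+1]$ whenever $T(m)\le k<T(m+1)$ follows purely from the strict monotonicity of $\lambda_n$, not from any derivative estimate. (The derivative bound would matter if you wanted to argue that each $k$ lands in its own table cell, but you do not need that.) So the proof stands without that paragraph.
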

\begin{proof}
Let $I$ be the set of integers $k$ with $c_0 n \leq k \leq c_1 n$.  Let $t = \lceil n^{1/3} \rceil$ (the exact value is not critical), let $J$ be the set of integers $k$ with $c_0 n - t \leq k \leq c_1 n$ such that $t\, | \, k$, and note that $|J| = \Theta(n^{2/3})$.
For any given value $k$, by repeated bisection (or another method), we can calculate an estimate $\hat{s}_n(k)$ of $s_n(k)$ up to accuracy $\pm \frac12$ in time $O(\log n)$.  
Thus we can calculate all the estimates $\hat{s}_n(k)$ for $k \in J$ in time $o(n)$. 
 For $k \in I \setminus J$, set the estimate $\hat{s}_n(k)$ to be $\hat{s}_n(j)$ where $j$ is the largest element of $J$ which is at most $k$.
We can calculate all these estimates in time $O(n)$.

It remains to check that these latter estimates for $i \in I \setminus J$ are accurate up to an additive error $\pm 1$.
Recall that, for $0 \leq s \leq \sqrt{2n}$, we let $\lambda_n(s)= \lambda(s/\sqrt{n})\, n$, the area of $B(c_1,s) \cap \SR$. 
Thus $\lambda_n(s_n(k))=k$.  
For each $k \in I$, the curved part of the boundary of $B(c_1,s_n(k)) \cap \SR$ has length $\Theta(\sqrt{n})$.
But if the curved part of the boundary of $B(c_1,s) \cap \SR$ has length $\ell$ then
\begin{equation} \label{eqn.curvedpart} \lambda_n(s+\tfrac12) - \lambda_n(s) > \tfrac12 \ell.
\end{equation}
Let us see why this is true. For $r>0$ and $0 \leq \alpha < 2\pi$, let $\mu(r,\alpha)$ be the area of a sector of radius $r$ and central angle $\alpha$, so $\mu(r,\alpha)= \frac12 \alpha r^2$. Note that the curved part of the boundary of such a sector has length $\alpha r$.  Let $P$ and $Q$ be the points where the curved part of the boundary of $B(c_1,s\!+\!\frac12) \cap \SR$ meets the sides of the square $\SR$, and let $\beta$ be the angle $Pc_1Q$. Then
\[ \lambda_n(s+\tfrac12) - \lambda_n(s) \geq \mu(s+\tfrac12,\beta) - \mu(s,\beta)= \tfrac12 \beta((s+\tfrac12)^2-s^2) = \tfrac12 \beta(s+\tfrac14) > \tfrac12 \ell, \]
proving~(\ref{eqn.curvedpart}).


By~(\ref{eqn.curvedpart}) we have
\[ \lambda_n(s_n(k)+ \tfrac12) = k + \Theta(\sqrt{n}) > k+t =\lambda_n(s_n(k+t)), \]
and so
\[ s_n(k) < s_n(k+t) < s_n(k)+ \tfrac12.\]
Now let $i \in I \setminus J$ and let $j$ be the largest element of $J$ which is at most $i$ (so $\hat{s}_n(i) = \hat{s}_n(j)$).  Then
\[ | \hat{s}_n(i) - s_n(i) |
\leq | \hat{s}_n(j) - s_n(j) | + | s_n(j) - s_n(i)| < 1.
\]
Thus the estimates for $i \in I \setminus J$ are accurate up to an additive error $\pm 1$, as required.
\end{proof}

\medskip

\noindent
\emph{(iv) Estimating Euclidean distances from values $s_n(k)$}
We have seen that in linear time we can estimate the values $s_n(k)$ up to an
additive error $\pm 1$. We can use these values to estimate Euclidean distances
from corners.
Recall that $\alpha_0 = \lambda(\tfrac{2}{\sqrt{3}}) \approx 0.9264$, see~(\ref{eqn.lambda3}).
%
\begin{lemma} \label{lem.conc}

Assume that $\mathcal{C}_1(v_1,\ldots,v_4)$ holds. There exists $\eps>0$ such that if $\alpha = \alpha_0+\eps$ then the following holds whp.
For each $i \in [4]$ and each $v \in V^-$, 
if $k=k(v_i,v)$ satisfies $k =\Omega(n)$ and $k \leq \alpha n$, then
\begin{equation} \label{eqn.conc}
\big| \dE(\pi(c_{i}), \X_v) - s_n(k) \big| \leq 1.19695 \sqrt{\log n}.
\end{equation}
\end{lemma}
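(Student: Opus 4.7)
The key observation is that the rank $k(v_i, v)$ is a random variable whose mean is very close to $\lambda_n(d)$, where $d = \dE(\pi(c_i), \X_v)$, and which concentrates around this mean by Chernoff. Since $\lambda_n(s_n(k)) = k$ by definition, inverting $\lambda_n$ via the mean value theorem converts control of $|k - \lambda_n(d)|$ into control of $|d - s_n(k)|$.

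Concretely, fix $i \in [4]$ and $v \in V^-$; set $c = \pi(c_i)$, $d = \dE(c, \X_v)$ and $d^* = \dE(\X_{v_i}, \X_v)$. Since $\mathcal{C}_1(v_1,\dots,v_4)$ holds, $\dE(\X_{v_i}, c) \leq \omega$, so $|d - d^*| \leq \omega$. Writing $N(s) = |\{w \in V : \X_w \in B(c, s) \cap \SR\}|$, we have $N(s) \sim \Bin(n, \lambda_n(s)/n)$ with mean $\lambda_n(s)$, and the triangle inequality gives the sandwich
\[ N(d - 2\omega) \leq k(v_i, v) \leq N(d + 2\omega). \]
Since $\lambda_n'$ is uniformly $O(\sqrt{n})$, we have $|\lambda_n(d \pm 2\omega) - \lambda_n(d)| = O(\omega\sqrt{n}) = o(\sqrt{n\log n})$, so any concentration of $N$ around $\lambda_n$ translates directly into concentration of $k$ around $\lambda_n(d)$.

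Next, I would apply Lemma~\ref{lem:Chernoff} to $N(s_n(j))$ for each of the four corners and each integer $j$ with, say, $\tfrac{1}{10} n \leq j \leq \alpha n$; a union bound over the $O(n)$ resulting events, combined with monotonicity of $s \mapsto N(s)$, yields that whp
\[ |N(s) - \lambda_n(s)| \leq C_1 \sqrt{\lambda_n(s)\log n} \]
uniformly for $\lambda_n(s)$ in the same range, for a suitable constant $C_1$. Combining with the sandwich, whp $|k - \lambda_n(d)| \leq C_1 \sqrt{\lambda_n(d)\log n}(1+o(1))$ uniformly for all $v \in V^-$ and $i \in [4]$ satisfying the hypotheses.

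Finally, $\lambda_n(s_n(k)) = k$ and the mean value theorem give $|d - s_n(k)| \leq |k - \lambda_n(d)|/\inf_\xi \lambda_n'(\xi)$, where $\xi$ ranges between $d$ and $s_n(k)$. Differentiating the formula from~(\ref{eqn.lambda}) yields $\lambda_n'(s) = u\psi(u)\sqrt{n}$ for $u = s/\sqrt{n}$, and $u\psi(u)$ is positive and strictly decreasing on $[1, \sqrt{2}]$, equal to $\pi/(3\sqrt{3})$ at $u = 2/\sqrt{3}$ (which is the $u$ for which $\lambda(u) = \alpha_0$, by~(\ref{eqn.lambda3})). Choosing $\eps$ small thus forces $\lambda_n'(\xi) \geq (1-o(1))(\pi/(3\sqrt{3}))\sqrt{n}$ throughout the admissible range, producing a bound $|d - s_n(k)| \leq C\sqrt{\log n}$ whp. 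The main obstacle is then to pin the constant down to at most $1.19695$: this requires using the tighter variance $\lambda_n(s)(1 - \lambda_n(s)/n)$ rather than $\lambda_n(s)$ in the Chernoff step, and then jointly optimising the ratio of the concentration error to $\lambda_n'$ over the whole admissible range of $k$, the worst case arising at the upper endpoint $k \approx \alpha n$.
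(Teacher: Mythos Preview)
Your core argument is correct and matches the paper's: Chernoff concentration of $N(B(\pi(c_i),s))\sim\Bin(n,\lambda_n(s)/n)$ around its mean $\lambda_n(s)$, a union bound over $O(n)$ choices of $(i,k)$ with failure probability $o(1/n)$ each, and the translation from $|k-\lambda_n(d)|$ to $|d-s_n(k)|$ via the derivative $\lambda_n'(s)=s\,\psi(s/\sqrt{n})$ (your formula is right, and $u\psi(u)$ is indeed strictly decreasing on $(1,\sqrt{2})$). The sandwich $N(d-2\omega)\le k\le N(d+2\omega)$ via $|d-d^*|\le\omega$ is exactly how the paper absorbs the corner-vertex-versus-corner discrepancy at the end of its proof.

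Where you diverge from the paper is in the route to the numerical constant. The paper does \emph{not} invoke the tighter binomial variance $\lambda_n(s)(1-\lambda_n(s)/n)$; it uses only the one-sided Chernoff bound $\Pr(X\le(1-\delta)\mu)\le e^{-\delta^2\mu/2}$ from Lemma~\ref{lem:Chernoff}. Instead of your mean-value inversion, the paper works directly: for each $k$ it sets $X^\pm=N(B(c_i,\,s_n(k)\pm c\sqrt{\log n}))$, bounds the increment $\E[X^+]-k$ below using the angle $\psi_0=(1-\eta)\tfrac{\pi}{6}$, and crucially bounds the mean \emph{above} by the full quarter-disk area, $\E[X^+]\le\tfrac{\pi}{4}s_n(k)^2$, rather than by the exact $\lambda_n$. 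This last step is what you are missing: it makes the Chernoff exponent collapse to $(1+o(1))\tfrac{2\pi}{9}c^2\log n$, uniformly in $k$, so no optimisation over $k$ is required and the constant $\sqrt{9/(2\pi)}\approx 1.1968$ drops out directly. Your variance suggestion would in fact yield a \emph{smaller} constant (and the worst case would migrate toward the lower end of the admissible range of $k$, not the upper), but it would require a Bernstein-type inequality not supplied by Lemma~\ref{lem:Chernoff}, and in any case it is not the paper's route.
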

%
\begin{proof}
Let $i \in [4]$.  Let $k$ be an integer with $k=\Omega(n)$ and $k \leq \alpha n$.
Observe that $N(B(c_i,s_n(k))) \sim \Bin(n,k/n)$, with mean $k$.
Let $0<\eta<1$: later we shall insist that $\eta$ is sufficiently small that a certain inequality holds.
By~(\ref{eqn.lambda3}), by taking $\eps$ sufficiently small, we may ensure that $s(k/n) \leq (1+\eta) \tfrac{2}{\sqrt{3}}$ and the angle $\psi = \psi(s(k/n))$ satisfies $\psi \geq \psi_0$,
where $\psi_0 = (1-\eta) \frac{\pi}{6}$.
Now, for a given constant $c>0$,
\begin{eqnarray*}
&&
\lambda_n (s_n(k)+c \sqrt{\log n}) - \lambda_n(s_n(k)) \\
& \geq & (1+o(1)) \, \psi_0 \,\big( (s_n(k) +c\sqrt{\log n})^2 - s_n(k)^2 \big)\\
& =  &
(1+o(1))\, 2c \,\psi_0\, s_n(k) \sqrt{\log n}.
\end{eqnarray*}
Also, since $\psi \leq \tfrac{\pi}{2}$, 
\begin{eqnarray*}
&&
\lambda_n(s_n(k) +c \sqrt{\log n}) - \lambda_n(s_n(k))\\ 
& \leq &
(1+o(1)) \,\tfrac{\pi}{2}\, \big( (s_n(k) + c \sqrt{\log n})^2 - s_n(k)^2 \big)\\
& \leq &
(1+o(1)) \pi\, c \, s_n(k) \sqrt{\log n},
\end{eqnarray*}
so
\[ 1 \leq \lambda_n(s_n(k)+c \sqrt{\log n}) / \lambda_n(s_n(k)) \leq 1+ O\big(\sqrt{\tfrac{\log n}{n}}\big)= 1+o(1).
\]

Let $X^+ = N(B(c_i,s_n(k) +c\sqrt{\log n}))$.  By Lemma~\ref{lem:Chernoff}, 
since $s_n(k) \sqrt{\log n} / \E[X^+] = o(1)$,
\begin{eqnarray*}
\pr(X^+ \leq k) & = &
\pr(X^+ \leq \E[X^+] (1-(1+o(1))\, 2c \,\psi_0 \, s_n(k) \sqrt{\log n}/\E[X^+])\\
& \leq &
\exp \{ -(1+o(1))\, \tfrac12 \, \big( 2c\,\psi_0 \, s_n(k) \sqrt{\log n} / \E[X^+] \big)^2  \big) \E[X^+] \}\\
& \leq &  
\exp \{ -(1+o(1))\, 2\, c^2 \psi_0^2\, s_n(k)^2 \log n / \E[X^+] 
\}.
\end{eqnarray*}
But
\[ \E[X^+] = (1+o(1))\, k = \lambda_n(s_n(k)) \leq \tfrac14 \pi\, s_n(k)^2,\]
so
\begin{eqnarray*}
 \pr(X^+ \leq k) & \leq & 
 \exp\big(  -(1+o(1)) \frac{\tfrac14 \pi s_n(k)^2}{\E[X^+]} \frac{8}{\pi} \, c^2 \psi_0^2 \log n \big)\\
 & \leq &
\exp\big(  -(1+o(1)) \tfrac{2 \pi}{9} c^2 (1-\eta)^2 \log n \big).
\end{eqnarray*}
Note that $\sqrt{9/(2\pi)} \approx 1.196827$.
Set $c=1.1969$, and choose $\eta>0$ sufficiently small that $\tfrac{2 \pi}{9} c^2 (1-\eta)^2 >1$.
Now we have $\pr(X^+ \leq k)= o(1/n)$.

Similarly, let $X^- = N(B(c_i, s_n(k) -c \sqrt{\log n}))$: then, with the same value of~$c$,
\[ \pr(X^- \geq k) = o(1/n).\]  Thus we have seen that whp the following holds.
For each $i \in [4]$ and each $v \in V^-$, 
if $k=k(v_i,v)$ satisfies $k=\Omega(n)$ and $k \leq \alpha n$, then
\[ N(B(c_i,s_n(k) - 1.1969 \sqrt{\log n})) <k  \; \mbox{ and } \; N(B(c_i,s_n(k) + 1.1969 \sqrt{\log n})) >k.\]
Now, for each $i \in [4]$, $d_E(\pi(c_i), v_i) < \omega \ll \sqrt{\log n}$.
Also
\begin{eqnarray*}
B(\pi(c_i),s_n(k) + 1.1969 \sqrt{\log n})
& \subseteq &
B(v_i, s_n(k) + 1.1969 \sqrt{\log n} + \omega)\\
& \subseteq &
B(\pi(c_i), s_n(k) + 1.1969 \sqrt{\log n} + 2\omega)\\
& \subseteq &
B(\pi(c_i), s_n(k) + 1.19695 \sqrt{\log n})
\end{eqnarray*}
(for $n$ sufficiently large).
But the first of these four balls contains more than $k$ points $\X_u$, so $\X_v$ must be in the second ball, and so it is in the last one; that is
$d_E(\pi(c_i),\X_v) \leq s_n(k) + 1.19695 \sqrt{\log n}$.
  Similarly, $\X_v \not\in B(\pi(c_i), s_n(k) - 1.19695 \sqrt{\log n})$,
 and the lemma follows.
\end{proof}


\medskip


\noindent
\emph{(v) Bounding the ranks $k(v_i,v)$ for $i \neq i_0(v)$}

Condition throughout on the event $\mathcal{C}_1$, and on a particular choice of $v_1,\ldots,v_4$; that is, condition on the event $\mathcal{C}_1(v_1,\ldots,v_4)$. Let $V^- = V \setminus \{v_1,\ldots,v_4\}$.
 Recall that, for each $i \in [4]$ and $v \in V^-$, $k(v_i,v)$ is the rank of $v$ in the order $\tau_{v_i}$.  
 Since $v_i$ is very close to $\pi(c_i)$ whp, we may think of $k(v_i,v)$ as roughly the number of points $\X_u$ for $u \in V$ which are as close to $\pi(c_i)$ as $\X_v$ is.
Let $\mathcal{C}_5$ 
be the event that, for each $j \in [4]$, 
\[ \big| \{ u \in V^{-} : \dE(c_j, \X_u) < \tfrac12 \sqrt{n} - 2\omega \}\big| \geq \tfrac{\pi}{16} n - n^{2/3}.\]
Then $\mathcal{C}_5$ holds whp, by Chebyshev's inequality.  
Let $i \in[4]$ and let $v \in V^-$.  If $d_E(\pi(c_i), \X_v) \geq \frac12 \sqrt{n}$ then
$d_E(\X_{v_i}, \X_v) \geq \frac12 \sqrt{n} - \omega$, and so
each vertex $u$ such that $\dE(\pi(c_i), \X_u) < \tfrac12 \sqrt{n} - 2\omega$ satisfies $\dE(\X_{v_i}, \X_u) < \dE(\X_{v_i}, \X_v)$; hence, if $d_E(\pi(c_i), \X_v) \geq \frac12 \sqrt{n}$ and $\mathcal{C}_5$ holds, then
$k(v_i,v) > \frac{\pi}{16}n - n^{2/3}$.

Recall that, given $v \in V^-$, the index $i_0= i_0(v) \in [4]$ satisfies $k(v_{i_0},v) = \min_{i \in [4]} k(v_i,v)$ (breaking ties by choosing the least such value $i$).
Condition on $\mathcal{C}_5$ holding. Then, for each $v \in V^-$ and each $i \in [4] \backslash \{i_0\}$, we have $k(v_i,v) > \tfrac{\pi}{16} n - n^{2/3}$.  (For, if not, then both $\dE(\pi(c_{i_0}),\X_v) < \tfrac12 \sqrt{n}$ and 
$\dE(\pi(c_{i}),\X_v) < \tfrac12 \sqrt{n}$, which is not possible since the distance between distinct corners is at least $\sqrt{n}$.)
Note that $\pi/16 \approx 0.1963 > 0.19$.
 Hence, for each $i \in [4] \backslash \{ i_0\}$ we have $k(v_i,v)  > 0.19 \, n$.
\smallskip

Next, we show that for $i = i_0 \pm 1$ (indices are taken modulo $4$), we have $k(v_i,v) \le \alpha n$.  Assume wlog that $i_0=1$, and consider $i=2$.
 We saw earlier that $\X_v$ is within distance $\omega$ of the quarter square containing $\pi(c_1)$.
   Recall that $v_2$ is close to the corner $\pi(c_2)$. The maximum distance from $\pi(c_2)$ to $\X_v$ is $(1+o(1))\sqrt{5n}/2$.
But $\lambda(\sqrt{5}/2) = \frac58 \sin^{-1} \frac35 + \frac12 \approx 0.902188$.  Thus the area of $B(\pi(c_2), (1+o(1))\sqrt{5n}/2)$ is $< 0.905 n$.  Hence, by Lemma~\ref{lem:Chernoff}, 
wvhp the number of vertices $w$ with $\X_w \in B(\pi(c_2), (1+o(1)) \sqrt{5n} / 2)$ is less than $0.91n$, so $k(v_2, v) < 0.91 n < \alpha n$, as required.
\medskip

\noindent
\emph{(vi) The case when $k(v_{i_0+2},v)$ is large}

For $i=i_0+2$, we have $k(v_i,v) > 0.19 n$, but the upper bound $k \le \alpha n$ might or might not hold. In order to deal with both cases, we need another auxiliary lemma. 
\begin{lemma} \label{lem.squarelike}
For $v \in V^-$, let $i_0=i_0(v)$. Then the following holds whp. For each $v \in V^-$ with $k(v_{i_0 +2},v) > \alpha n$, the near-rhombus formed from the intersection of the two  
annuli centred on the corners $\pi(c_{i_0-1})$ and $\pi(c_{i_0+1})$ is squarelike, i.e., the angles in the near-rhombus are between $\pi/3$ and $2\pi/3$.
\end{lemma}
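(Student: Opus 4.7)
By the four-fold rotational symmetry of $\SR$ together with the cyclic equivariance of both hypothesis and conclusion under relabelling of corner indices, I would WLOG take $\pi$ to be the identity and $i_0(v)=1$; then the two annuli are centred at $c_4$ and $c_2$, and the angles of the near-rhombus are $\theta := \angle c_4 \X_v c_2$ and $\pi - \theta$. The goal then reduces to proving $\theta \in [\pi/2, 2\pi/3]$, which is strictly stronger than what the lemma requires (since $[\pi/3,2\pi/3]$ is symmetric about $\pi/2$).

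The first step is to translate the hypothesis $k(v_3, v) > \alpha n$ into the geometric consequence $\dE(c_3, \X_v) > \tfrac{2}{\sqrt{3}}\sqrt{n}$, valid wvhp uniformly over such $v$. Since $\alpha = \alpha_0 + \eps$ with $\alpha_0 = \lambda(\tfrac{2}{\sqrt{3}})$ and fixed $\eps > 0$, Lipschitz continuity of $\lambda$ gives
\[\lambda_n(\tfrac{2}{\sqrt{3}}\sqrt{n} + 2\omega) = \alpha_0 n + O(\omega \sqrt{n}) < \alpha n\]
for large $n$, so by Lemma~\ref{lem:Chernoff} applied to $N(B(c_3, \tfrac{2}{\sqrt{3}}\sqrt{n} + 2\omega)) \sim \Bin(n, \lambda_n(\cdot)/n)$ we get $N(B(c_3, \tfrac{2}{\sqrt{3}}\sqrt{n} + 2\omega)) < \alpha n$ wvhp. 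Combined with $\dE(c_3, v_3) < \omega$ (so $B(v_3, \tfrac{2}{\sqrt{3}}\sqrt{n} + \omega) \subseteq B(c_3, \tfrac{2}{\sqrt{3}}\sqrt{n} + 2\omega)$), this shows that if $\dE(c_3, \X_v) \leq \tfrac{2}{\sqrt{3}}\sqrt{n}$ then $k(v_3, v) \leq N(B(v_3, \tfrac{2}{\sqrt{3}}\sqrt{n} + \omega)) < \alpha n$, contradicting the hypothesis.

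Given this geometric fact, the upper bound $\theta \leq 2\pi/3$ is immediate from the rescaled, symmetrised version of claim~(\ref{claim.psi}): the rotation of $\SR$ about the origin by $\pi$ interchanges $c_1 \leftrightarrow c_3$ and $c_2 \leftrightarrow c_4$, so the claim reads ``$\dE(c_3, \x) \geq \tfrac{2}{\sqrt{3}}\sqrt{n}$ implies $\angle c_4 \x c_2 \leq 2\pi/3$,'' and we apply it with $\x = \X_v$. For the lower bound I would use Thales' theorem: the Thales circle on diameter $c_2 c_4$ is centred at the origin with radius $\sqrt{n/2}$, and every $\x \in \SR$ satisfies $\| \x \|_2 \leq \sqrt{n/2}$ with equality only at the four corners, so $\SR$ lies inside the closed Thales disk; hence every $\x \in \SR \setminus \{c_2, c_4\}$ has $\angle c_2 \x c_4 \geq \pi/2$. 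By the triangle inequality, $\X_v$ lies at distance at least $(\tfrac{2}{\sqrt{3}} - 1)\sqrt{n} > 0$ from each of $c_2$ and $c_4$, so we are safely away from the excluded points and therefore $\theta \geq \pi/2$.

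The main obstacle is the careful uniform-in-$v$ concentration argument of the first step: one must verify that the positive-constant gap $\eps = \alpha - \alpha_0$ strictly absorbs the $O(\omega\sqrt{n})$ corrections arising from $v_3$ being only $\omega$-close to $c_3$ and from binomial fluctuations of $N$. This is straightforward because $\eps$ is a fixed positive constant while $\omega\sqrt{n} = o(n)$.
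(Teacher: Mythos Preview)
Your proposal is correct and follows essentially the same route as the paper: translate the rank hypothesis $k(v_{i_0+2},v) > \alpha n$ into the distance bound $\dE(\pi(c_{i_0+2}),\X_v) \geq \tfrac{2}{\sqrt{3}}\sqrt{n}$ via a Chernoff argument (the paper's events $A_1, A_2$), then invoke claim~(\ref{claim.psi}) for the upper bound $\theta \leq 2\pi/3$ and elementary geometry for the lower bound $\theta \geq \pi/2$. The paper dismisses the lower bound with ``clearly at least $\pi/2$'', whereas your Thales-circle justification makes this explicit; otherwise the arguments are the same up to your WLOG reduction to $i_0=1$ (which is fine provided you remember the Chernoff event must be asserted for all four corners simultaneously before specialising).
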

\begin{proof}
Let $A_1$ be the event that $v_i \in B(\pi(c_i), \omega)$ for each $i \in [4]$.
Then $A_1$ holds whp.
Let $A_2$ be the event that 
$N(B(c_i, \tfrac{2}{\sqrt{3}} \sqrt{n} + \omega)) < \alpha n$ for each $i \in [4]$.
Then $A_2$ holds wvhp by Lemma~\ref{lem:Chernoff}, since by~\eqref{eqn.lambda3} 
the area of $B(\pi(c_{i_0+2}), \frac{2}{\sqrt{3}}\sqrt{n} + \omega)$ is $(1+o(1)) \alpha_0 n$, and $\alpha>\alpha_0$.

For $v \in V^-$, let $A_3(v)$ be the event that $k(v_{i_0+2},v) > \alpha n$, and let $A_4(v)$ be the event that $d_E(\pi(c_{i_0+2}), \X_v) \geq \frac{2}{\sqrt{3}} \sqrt{n}$.
If $A_4(v)$ holds, then, by~\eqref{claim.psi}, the angle $c_{i_0-1} \X_v c_{i_0+1}$ is at most $2 \pi/3$ (and clearly at least $\pi/2$). Hence the intersection of the two 
annuli centred on the corners $\pi(c_{i_0-1})$ and $\pi(c_{i_0+1})$ forms a near-rhombus such that the angles are between $\pi/3$ and $2\pi/3$, that is, it is squarelike.
Thus we want to show that whp, for each  $v \in V^-$, if $A_3(v)$ holds then $A_4(v)$ holds.

But on $A_1 \land A_2$, for each  $v \in V^-$, if $A_4(v)$ fails then
\begin{eqnarray*}
k(v_{i_0+2},v) &=&
\big| \{u \in V: d_E(v_{i_0+2}, \X_u) \leq d_E(v_{i_0+2}, \X_v) \} \big|\\
& \leq &
N(B(v_{i_0+2}, \tfrac{2}{\sqrt{3}} \sqrt{n}))\\
& \leq &
N(B(\pi(c_{i_0+2}), \tfrac{2}{\sqrt{3}} \sqrt{n} + \omega)) \;\; < \;
\alpha n
\end{eqnarray*}
so $A_3(v)$ fails.  In other words, on $A_1 \land A_2$, for each  $v \in V^-$, if $A_3(v)$ holds then $A_4(v)$ holds; and since  $A_1 \land A_2$ holds whp, this completes the proof.

\end{proof}
\smallskip

\subsection{Completing the proof of Theorem~\ref{thm:main2}}

In order to finish the proof of Theorem~\ref{thm:main2},
we may assume wlog that, in step (a), in $O(n)$ time we have found `corner' vertices $v_1,\ldots,v_4$ such that $\mathcal{C}_1(v_1,\ldots,v_4)$ holds; and we may assume wlog that the random permutation $\pi$ is the identity map (as in the proof of Theorem~\ref{thm:main}). For each vertex $v \in V^-$, we form the rank list $R(v)=(k(v_i,v):i=1,\ldots,4)$.
We may do this in linear time, by reading through the four orders $\tau_{v_i}$. 

Recall that whp, for each vertex $v \in V^-$,  $k(v_i,v) > 0.19\,n$ for each $i \neq i_0$,
and  $k(v_i,v) < 0.91 \,n$ for $i=i_0 \pm 1$. Assume wlog that these inequalities hold, and consider a vertex $v \in V^-$.  
We distinguish the two cases, whether $k(v_{i_0+2},v) \le \alpha n$ or not.
\smallskip

\textit{Case 1: $k(v_{i_0+2}, v) \le \alpha n$.} \\
In this case, the ideas of Theorem~\ref{thm:main} can be applied.
Let $I^-= [4] \backslash \{i_0\}$. 
By Lemmas~\ref{lem.estimate-s} and~\ref{lem.conc}, whp, for each vertex $v$ and each $i \in I^-$, we can calculate the value $d_E(c_i, \X_v)$ up to an additive error of $1.19695 \sqrt{\log n} +1$.
%
Now, exactly as in the proof of Theorem~\ref{thm:main}, we consider three circles $C_i(v)$ (with corresponding annuli $A_i(v)$) for $i \in I^-$,
and pick a pair of circles 
meeting at an angle between $\pi/3$ and $2\pi/3$.
We set $\Phi(v)$ to be a point we compute which is within distance 1 of (or arbitrarily close to) the relevant point where these circles 
meet, 
 and then $d_E(\Phi(v),\X_v) < 1.197 \sqrt{\log n}\,$ (for $n$ sufficiently large).  
 \smallskip
 
 \textit{Case 2: $k(v_{i_0+2}, v) > \alpha n$.} \\
As in the last case, by Lemmas~\ref{lem.estimate-s} and~\ref{lem.conc}, we can calculate the values $d_E(c_{i_0-1}, \X_v)$ and $d_E(c_{i_0+1}, \X_v)$ up to an additive error of $1.19695 \sqrt{\log n} +1$.
 In this case, by Lemma~\ref{lem.squarelike}, the two circles (with corresponding annuli) 
centred on the corners $c_{i_0-1}$ and $c_{i_0+1}$ meet at an angle between $\pi/3$ and $2\pi/3$.
As before, we set $\Phi(v)$ to be a point within distance 1 of
the relevant point where these circles 
meet, and we find that $d_E(\Phi(v),\X_v) < 1.197 \sqrt{\log n}$.

In both cases, all the calculations can be completed in linear time.

\section{Concluding remarks}\label{sec:Conclusion}

Recall that there is a family of $n$ random points $\X_v$ for $v \in V$, independently and uniformly distributed in the square $\SR = \left[-\sqrt{n}/2,\sqrt{n}/2\right]^2$ of area $n$.  We do not see these points, but learn about them in one of the following two ways: 
(a) when we are given just the corresponding random geometric graph (for a suitable threshold distance $r$), and (b) when we have some geometric information.
In case (a), 
we obtained an embedding $\Phi$ with displacement at most about $r$, but we required the threshold distance $r$ to satisfy $r \gg n^{3/14}$, which yields rather a dense random geometric graph.
In case (b), for each vertex $v$, we are given a list of all the vertices $w$ ordered by increasing Euclidean distance from $\X_v$ of the corresponding points $\X_w$.
In this case, we obtain an embedding $\Phi$ with dramatically less error. 

Can we obtain lower displacement 
 for these approximate reconstruction
 problems?  Can we obtain similar low displacement
for smaller values of $r$ (yielding sparser random graphs)? 
Another open issue is whether there is a different choice of non-trivial natural geometrical information that would help to extend the range of values of $r$ we can handle. Notice that exposing the real length of all the edges would trivialise the problem, as we saw in Subsection~\ref{subs:further}.
Another natural line of research is to consider a region in the plane different from the square $\SR$, for instance a disk of area $n$, still with $n$ iid uniformly distributed random points $\X_v$ yielding a random geometric graph $G$.   Here we cannot of course start 
from the corners,
but we do have a boundary and we can identify vertices $v$ of $G$ with $\X_v$ near the boundary by looking at vertex degrees.  Similarly, it would be interesting to investigate such problems with distributions different from the uniform distribution.
 
Also it would be interesting to generalise the problem to higher dimensions, to $\real^d$ for $d\ge 3$. We believe that for bounded dimension $d$, or indeed for sufficiently slowly growing dimension, similar results to those obtained in this paper could be obtained for $n$ iid points uniformly distributed in the $d$-cube $\left[-n^{1/d}/2, n^{1/d}/2\right]^d$ of volume $n$.

Finally, let us mention
the model where the underlying space is the unit sphere ${\mathbb S}^{d-1}$ in $\R^d$
(with $n$ iid uniformly distributed random points $\X_v$).  See~\cite{bder16}
 for recent work on this model in high dimensions, where the main interest is to test whether we are looking at a random geometric graph from this model or at a corresponding Erd\H{o}s-R\'enyi random graph.  See also the references in~\cite{bder16} for other work on this model.
For the approximate realization or reconstruction problem, there is now not even a boundary to start from! 

\textbf{Acknowledgement.} We thank the referees for careful reading and helpful suggestions.

\end{document}